\documentclass[reqno, 12pt]{amsart}

\usepackage{amsfonts, amsthm, amsmath, amssymb}
\usepackage[hidelinks]{hyperref}
\hypersetup{
  pdftitle={On the average of class numbers of indefinite binary quadratic forms},
  pdfauthor={Yijie Diao},
  pdfsubject={Average class numbers of indefinite binary quadratic forms in discriminant order},
  pdfkeywords={Indefinite binary quadratic form, class number, real quadratic field, Pell equation, fundamental unit, determinant method}
}
\usepackage{microtype,needspace}
\usepackage[margin=1.5in]{geometry}

\numberwithin{equation}{section}

\newtheorem{theorem}{Theorem}[section]
\newtheorem{lemma}[theorem]{Lemma}
\newtheorem{proposition}[theorem]{Proposition}
\newtheorem{corollary}[theorem]{Corollary}

\renewcommand{\leq}{\leqslant}

\renewcommand{\geq}{\geqslant}

\renewcommand{\bar}{\overline}

\newcommand{\ZZ}{\mathbb{Z}}
\newcommand{\NN}{\mathbb{N}}
\newcommand{\QQ}{\mathbb{Q}}

\newcommand{\leg}[2]{\left(\frac{#1}{#2}\right)}

\DeclareMathOperator{\disc}{disc}
\DeclareMathOperator{\Nm}{N}

\begin{document}

\date{\today}
\title[Class numbers of indefinite binary quadratic forms]{On the average of class numbers of indefinite binary quadratic forms}
\author{Yijie Diao}
\address{ISTA\\
Am Campus 1\\
3400 Klosterneuburg\\
Austria}
\email{yijie.diao@ist.ac.at}
\subjclass[2020]{11E41 (11D09, 11R11, 11R29, 11N36)}

\begin{abstract}
We prove that the sum of class numbers of primitive indefinite binary quadratic forms with integer coefficients over positive nonsquare discriminants less than $X$ is $O(X^{3/2}\sqrt{\log\log X}/(\log X)^{3/2})$.
This improves the longstanding classical upper bound $O(X^{3/2}/\log X)$ and implies the same upper bound for the corresponding sum over real quadratic fields.
Our main input is a uniform upper bound for the number of small fundamental solutions to Pell's equation.
In particular, we prove that, for almost all nonsquare positive integers $D$, the fundamental solution $\varepsilon_D$ of $t^2-Du^2=1$ is larger than any fixed power of $D$.
\end{abstract}

\maketitle
\raggedbottom
\thispagestyle{empty}
\enlargethispage{2pt}
\setcounter{tocdepth}{1}
\tableofcontents

\section{Introduction}

\subsection{Average of class numbers}

Let $\mathcal D$ be the set of positive nonsquare integers congruent to $0$ or $1$ modulo $4$.
For $D\in\mathcal D$, write $h^+(D)$ for the number of $\operatorname{SL}_2(\ZZ)$-classes of primitive indefinite binary quadratic forms $Ax^2+Bxy+Cy^2$ with integer coefficients and discriminant $B^2-4AC=D$.
We study the sum
\[
  S^+(X) = \sum_{\substack{D\in\mathcal D\\D<X}}h^+(D).
\]

Let $K$ be a real quadratic field with ring of integers $\mathcal O_K$, discriminant $D_K$, and ideal class number $h(K)=\#\operatorname{Cl}(K)$.
Put
\[
 S(X)=\sum_{\substack{K/\QQ\ \mathrm{quadratic}\\0<D_K<X}}h(K).
\]
The correspondence between forms and ideals identifies $h^+(D_K)$ with the narrow class number of $K$ (see Lamzouri~\cite[p.~566]{lamzouri}), so
\[
 h(K)\leq h^+(D_K)\leq2h(K).
\]
Thus
\begin{equation}\label{e:field-form-average-comparison}
 S(X)
 \leq
 \sum_{\substack{D\in\mathcal D,\ D<X\\D\ \mathrm{fundamental}}}h^+(D)
 \leq
 S^+(X).
\end{equation}

Let $\eta_D^+>1$ be the least totally positive unit of the quadratic order $\mathcal O_D=\ZZ[(D+\sqrt D)/2]$, given by
\begin{equation}\label{e:totally-positive-unit}
 \eta_D^+=\min_{\substack{t,u\in\ZZ_{>0}\\t^2-Du^2=4}}\frac{t+u\sqrt D}2.
\end{equation}
Write $\chi_D(n)$ for the quadratic Dirichlet character defined by the Kronecker symbol.
Dirichlet's class number formula~\cite[(1.1)]{lamzouri} gives
\begin{equation}\label{e:ordinary-class-formula}
 h^+(D)\log\eta_D^+=\sqrt D L(1,\chi_D).
\end{equation}
Since $\eta_D^+>\sqrt D$, we have the elementary lower bound
\begin{equation}\label{e:elementary-unit-lower-bounds}
 \log\eta_D^+\gg\log D.
\end{equation}

In his 1984 paper, Hooley studied class number averages through the distribution of fundamental solutions of Pell's equation.
The analogue\footnote{Hooley used the \textit{determinant} $d=B^2-AC$ of quadratic forms $Ax^2+2Bxy+Cy^2$, so his class number is $h^+(4d)$ in our notation.
} of his conjecture~\cite[Conjecture~7]{hooley} in our setting is that, for some constant $C>0$,
\[
 S^+(X)\sim CX(\log X)^2.
\]
However, we are still very far from this conjecture.
Sarnak~\cite[p.~231, (79)]{sarnak-reciprocal} recorded the following classical upper bound
\begin{equation}\label{e:classical-order-average}
 S^+(X)
 \ll\frac{X^{3/2}}{\log X}.
\end{equation}
As Sarnak observed, this follows directly from~\eqref{e:elementary-unit-lower-bounds} and the asymptotic
\[
 \sum_{\substack{D\in\mathcal D\\D<X}}h^+(D)\log\eta_D^+
 =\frac{\pi^2}{18\zeta(3)}X^{3/2}+O(X\log X),
\]
predicted by Gauss and proved in this form by Siegel\footnote{~The upper bound~\eqref{e:classical-order-average} also has an elementary proof.
Indeed, partial summation and elementary character sum estimates give
\[
 \sum_{\substack{D\in\mathcal D\\D<X}}L(1,\chi_D)\ll X.
\]
Combining this with~\eqref{e:elementary-unit-lower-bounds} in the class number formula~\eqref{e:ordinary-class-formula} gives~\eqref{e:classical-order-average}.}~\cite{siegel-average} in 1944.
The product $h^+(D)\log\eta_D^+$ is therefore well understood on average, but the two factors separately are much harder to control.
Fouvry and Jouve~\cite[Theorem~3 and (56)]{fj-positive} obtained an explicit upper bound for the leading constant in the analogous estimate for the sum of $h(K)$ over real quadratic fields with $4\parallel D_K$, while the order of magnitude of the classical upper bound~\eqref{e:classical-order-average} has not been improved for several decades.

We give the first improvement to the classical upper bound~\eqref{e:classical-order-average} by a factor $\sqrt{\log X/\log\log X}$.

\begin{theorem}\label{T:field-class-average}
For $X\geq3$, we have
\[
 \sum_{\substack{K/\QQ\ \mathrm{quadratic}\\0<D_K<X}}h(K)
 \leq\sum_{\substack{D\in\mathcal D\\D<X}}h^+(D)
 \ll \frac{X^{3/2}\sqrt{\log\log X}}{(\log X)^{3/2}}.
\]
\end{theorem}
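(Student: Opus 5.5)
The first inequality is~\eqref{e:field-form-average-comparison}, so it suffices to bound $S^+(X)$. I would start from the class number formula~\eqref{e:ordinary-class-formula}, which gives $h^+(D)=\sqrt D\,L(1,\chi_D)/\log\eta_D^+$. The plan is to split the discriminants according to the size of the unit. Fix a parameter $Y$ (a power of $\log X$ to be chosen) and call $D$ \emph{exceptional} if $\log\eta_D^+\leq Y\log D$; otherwise $D$ is \emph{typical}.

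For typical $D$ we have $h^+(D)\leq\sqrt D L(1,\chi_D)/(Y\log D)$. By dyadic decomposition and the elementary bound $\sum_{D<X}L(1,\chi_D)\ll X$ from the footnote, the typical contribution is $\ll X^{3/2}/(Y\log X)$.

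For exceptional $D$ I would use only~\eqref{e:elementary-unit-lower-bounds}, $h^+(D)\ll\sqrt D L(1,\chi_D)/\log D$. This leaves the task of bounding $\sum L(1,\chi_D)$ over exceptional $D\sim X$. By Cauchy--Schwarz together with a second moment bound $\sum_{D<X}L(1,\chi_D)^2\ll X$ (standard via character sums and a large sieve, or directly from Heath-Brown's quadratic large sieve), this sum is $\ll (X\cdot N(X,Y))^{1/2}$. Here
\[
N(X,Y)=\#\{D\sim X:\ \exists\,(t,u),\ t^2-Du^2=4,\ 2\leq t\leq D^{Y}\}.
\]

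The main input, and the step I expect to be the main obstacle, is a uniform count of small Pell solutions. Each exceptional $D$ comes from a pair $(t,u)$ with $u^2\mid t^2-4$, $D=(t^2-4)/u^2\sim X$ and $t\leq X^{Y}$. Counting trivially over $t$ is hopeless once $Y>1/2$. Instead I would count pairs $(t,u)$ with $u\asymp t/\sqrt X$, splitting dyadically in $t$. For each dyadic range one needs square divisors $u^2$ of $t^2-4$ of size about $T^2/X$. Write $t^2-4=(t-2)(t+2)$, so that $u^2=u_1u_2$ with $u_1\mid t-2$ and $u_2\mid t+2$. Parametrizing $t-2=u_1a$ and $t+2=u_2b$ turns the problem into counting integer points on the curves $u_1a-u_2b=-4$ under a squareness condition. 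I would attack this with a determinant-method bound (Bombieri--Pila/Heath-Brown) for points on the associated conics $t^2-Du^2=4$ in boxes. The aim is $N(X,Y)\ll X/(\log X)^{A}$ for $Y$ up to about a power of $\log X$, or at least
\[
N(X,Y)\ll X\,Y^{c}\,\frac{\log\log X}{\log X}
\]
with a small power $c$. The $\log\log X$ factor would come from the divisor-type averaging over the factorization $u_1u_2$.

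Finally I would balance the two contributions. The exceptional part is about $X^{3/2}(N/X)^{1/2}/\log X$ and the typical part is $X^{3/2}/(Y\log X)$. Getting the target $\sqrt{\log\log X}/(\log X)^{3/2}$ requires $N(X,Y)/X\ll\log\log X/\log X$ with $Y\asymp\sqrt{\log X/\log\log X}$. So the key lemma to prove would be precisely this: the number of $D\sim X$ with $\eta_D^+\leq D^{Y}$ is $\ll X\log\log X/\log X$, uniformly for $Y\ll\sqrt{\log X}$.

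If the determinant method only gives a weaker uniformity, I would instead first sieve out $D$ with $L(1,\chi_D)$ large. Then I would apply Hölder's inequality with higher moments of $L(1,\chi_D)$ in place of Cauchy--Schwarz, so that a weaker bound on $N$ still suffices.
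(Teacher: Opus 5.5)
Your reduction is the one the paper uses. You split according to whether the unit exceeds $D^\theta$, bound the typical part by the first moment of $L(1,\chi_D)$, and bound the exceptional part by Cauchy--Schwarz against a second moment over all of $\mathcal D$, with $\theta\asymp\sqrt{\log X/\log\log X}$. Your stated target $N(X,Y)\ll X\log\log X/\log X$ would suffice. The gap is that this key lemma is the entire content of the theorem, and the proposal gives no argument that could establish it. A determinant-method bound applied to each conic $t^2-Du^2=4$ separately cannot give any saving. Such an upper bound is at least of order one for every $D$, so summing over $D\sim X$ gives nothing better than $X$. What is needed is a count of the whole family at once, showing that most $D$ have no small solution at all. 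The known way to do this (Hooley, Heath-Brown, Reuss) treats $(d/e,v/u)$ as rational points within $O(1/Y)$ of the parabola $t=s^2$ and applies the approximate determinant method. The best previous result, Reuss's bound \eqref{e:reuss-box-estimate}, only gives $E_\theta(X)=o(X)$ for \emph{fixed} $\theta<3$. It loses all power saving once $Y\geq(UV)^{9/2}$ in balanced boxes, and his congruence argument in unbalanced boxes fails for $Y\geq(UV)^3$. Any fixed-$\theta$ input only improves the constant in \eqref{e:classical-order-average}. Your H\"older fallback does not avoid this: with $k$-th moments you still need $N(X,Y)/X\ll Y^{-k/(k-1)}$ with $Y\to\infty$, which is again an exceptional-set bound uniform in a growing exponent.

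The missing ingredient is Theorem~\ref{T:uniform-exceptional}, namely $E_\theta(X)\ll X^{1-c/\theta^2}$ uniformly for $3\leq\theta\leq c\sqrt{\log X/\log\log X}$. Its proof needs ideas beyond a standard determinant bound. First, the improved determinant estimate of Lemma~\ref{L:quantitative-two-layer-determinant} uses the relation $(N_1-2N_2)(N_1-2N_2-1)=0$ on the coefficient grid to gain extra cancellation in the Cauchy--Binet expansion. Second, auxiliary forms of bidegree $(1,B)$ with $B=\lceil\theta^2\rceil$ allow $(d,e)$ to be eliminated against $e^2v-d^2u=h$. The count then becomes one of nonzero square values of an odd-degree binary form, handled by the square sieve (Corollary~\ref{C:anisotropic-square}) with only polynomial losses in $\theta$. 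Third, the unbalanced factorizations $\min(u,v)\leq(UV)^{1/(100\theta^2)}$ need separate treatment through $d^2w=n^2+a$ in $\QQ(\sqrt{-a})$, since there the $U+V$ terms would otherwise destroy the saving. Fourth, all losses must be explicit logarithmic factors, because an $X^\varepsilon$ loss would swamp a saving of order $\theta^{-2}$ in the exponent. Relatedly, the factor $\sqrt{\log\log X}$ in the theorem does not come from divisor averaging. It reflects the admissible range $\theta\ll\sqrt{\log X/\log\log X}$, where $\theta^{O(1)}(\log X)^{O(1)}$ losses can still be absorbed by $X^{c/\theta^2}$. The final bound $X^{3/2}/(\theta\log X)$ is taken at the endpoint of that range.
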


Genus theory gives a lower bound of order $X\log X$ for the sum of $h^+(D)$ over positive fundamental discriminants $D<X$.
Fouvry and Kl\"uners~\cite[Theorem~3]{fk-weighted} improved the leading constant in this lower bound for the subfamily $D\equiv1\bmod4$.

By ordering quadratic forms by their fundamental units, Sarnak~\cite[Theorem~2.1]{sarnak} proved
\[
 \sum_{\substack{D\in\mathcal D\\\eta_D^+\leq Y}}h^+(D)
 \sim\operatorname{Li}(Y^2),
 \quad \text{and} \quad
 \#\{D\in\mathcal D:\eta_D^+\leq Y\}
 \sim\frac{35}{16}Y.
\]
This class number sum counts oriented primitive closed geodesics on the modular surface of length at most $2\log Y$, so the prime geodesic theorem gives the main term $\operatorname{Li}(Y^2)$.
Siegel averaged the product $h^+(D)\log\eta_D^+$, while Sarnak ordered the class numbers by $\eta_D^+$.

For primitive indefinite ternary quadratic forms, Gamburd, Ghosh, Sarnak, and Whang~\cite[(B)]{gamburd-ghosh-sarnak-whang} proved a class number asymptotic of order $X\log X$ for positive determinants less than $X$.
For ternary forms, genera containing more than one class contribute only a term of lower order, but for binary forms the number of classes in each genus remains difficult to control (see~\cite[Remark~1.5]{gamburd-ghosh-sarnak-whang}).

\subsection{Torsion in class groups}

For a real quadratic field $K$ and an integer $n\geq2$, let $h_n(K)=\#\operatorname{Cl}(K)[n]$, and put
\[
 S_n(X)=\sum_{0<D_K<X}h_n(K).
\]
The Cohen--Lenstra heuristics~\cite{cohen-lenstra} and Gerth's extension~\cite{gerth} predict, for suitable constants $c_n>0$,
\[
 S_n(X)\sim
 \begin{cases}
  c_nX,&n\text{ odd},\\
  c_nX\log X,&n\text{ even}.
 \end{cases}
\]
For $n=2$, it follows from Gauss's genus theory.
The asymptotic is also known for $n=3,4,6$ (see~\cite{chan-koymans-pagano-sofos,davenport-heilbronn,fk-weighted,koymans-lemke-oliver-sofos-thorne}).
Smith~\cite{smith-i,smith-ii} established the predicted distributions of higher $2$-power class ranks.

For general $n$, the known upper bounds for $S_n(X)$ give only a small saving in the exponent $3/2$ that follows from~\eqref{e:field-form-average-comparison} and~\eqref{e:classical-order-average}.
For $n\geq2$, Koymans and Thorner~\cite[Theorem~1.1 and (1.14)]{koymans-thorner} proved
\[
 S_n(X)\ll_{n,\varepsilon}X^{3/2-1/(n+1)+\varepsilon}.
\]
For primes $\ell\geq5$, Chan and Koymans~\cite[Theorem~1.2]{chan-koymans} obtained
\[
 S_\ell(X)\ll_{\ell,\varepsilon}X^{3/2-3/(2\ell+2)+\varepsilon}.
\]

Let $h_{\mathrm{odd}}(K)$ denote the number of elements of odd order in $\operatorname{Cl}(K)$.
Browning and Chan~\cite[p.~1679]{browning-chan} proved
\[
 \sum_{\substack{K/\QQ\ \mathrm{quadratic}\\0<D_K<X}}h_{\mathrm{odd}}(K)
 \ll\frac{X^{3/2}}{(\log X)^{3/2}}.
\]
They combined genus theory with a mean value estimate of Wilson~\cite[Corollary~1.6]{wilson} carrying the weight $2^{-\omega(D_K)}$.
We can also improve this bound by a factor $\sqrt{\log X/\log\log X}$.

\begin{theorem}\label{T:odd-class-average}
For $X\geq3$, we have
\[
 \sum_{\substack{K/\QQ\ \mathrm{quadratic}\\0<D_K<X}}h_{\mathrm{odd}}(K)
 \ll\frac{X^{3/2}\sqrt{\log\log X}}{(\log X)^2}.
\]
\end{theorem}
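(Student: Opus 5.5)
The plan is to combine the genus-theory reduction of Browning and Chan with the unit bound that drives Theorem~\ref{T:field-class-average}. For a real quadratic field $K$ with $\omega(D_K)$ distinct prime factors, genus theory gives $\#\operatorname{Cl}(K)[2]\asymp 2^{\omega(D_K)}$. Since $h(K)=h_{\mathrm{odd}}(K)\cdot\#\operatorname{Cl}(K)[2^\infty]\geq h_{\mathrm{odd}}(K)\cdot\#\operatorname{Cl}(K)[2]$, we get
\[
 h_{\mathrm{odd}}(K)\ll 2^{-\omega(D_K)}h(K)\leq 2^{-\omega(D_K)}h^+(D_K)=2^{-\omega(D_K)}\frac{\sqrt{D_K}\,L(1,\chi_{D_K})}{\log\eta^+_{D_K}}
\]
by~\eqref{e:ordinary-class-formula}. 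It therefore suffices to bound
\[
 \sum_{\substack{D\ \mathrm{fundamental}\\0<D<X}} 2^{-\omega(D)}\frac{\sqrt D\,L(1,\chi_D)}{\log\eta_D^+}.
\]

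We split this sum according to the size of the unit. Let $\lambda\geq 1$ be a parameter, to be chosen of size about $\log X/\log\log X$ up to constants, and separate $D$ with $\log\eta^+_D>\lambda\log D$ from $D$ with $\log\eta^+_D\leq\lambda\log D$.

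\emph{Large units.} Here the summand is at most $2^{-\omega(D)}\sqrt D L(1,\chi_D)/(\lambda\log D)$. Wilson's weighted mean value estimate (as used by Browning--Chan) gives
\[
 \sum_{D<X}2^{-\omega(D)}L(1,\chi_D)\ll \frac{X}{\sqrt{\log X}}.
\]
By partial summation this part contributes
\[
 \ll \frac{X^{3/2}}{\lambda(\log X)^{3/2}}.
\]

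\emph{Small units.} Here we use~\eqref{e:elementary-unit-lower-bounds}, $\log\eta^+_D\gg\log D$, together with the paper's uniform upper bound for the number of $D<X$ with $\eta^+_D\leq D^{\lambda}$. This is the main input behind Theorem~\ref{T:field-class-average}, presumably a bound of the shape $\#\{D<X:\eta_D^+\leq X^\lambda\}\ll X^{1/2+o(1)}\cdot(\text{something in }\lambda)$. Combining it with a pointwise or Cauchy--Schwarz bound for $L(1,\chi_D)$ over this thin set, and dropping the weight $2^{-\omega(D)}\leq1$, should make this part negligible.

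\emph{Choice of $\lambda$.} In Theorem~\ref{T:field-class-average} the analogous balancing presumably produces the factor $\sqrt{\log\log X/\log X}$ relative to $X^{3/2}/\log X$, so $\lambda\asymp\sqrt{\log X/\log\log X}$ is the admissible range there. The same $\lambda$ here gives
\[
 \frac{X^{3/2}}{(\log X)^{3/2}}\cdot\sqrt{\frac{\log\log X}{\log X}}=\frac{X^{3/2}\sqrt{\log\log X}}{(\log X)^{2}},
\]
which is the claim.

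\emph{Main obstacle.} The difficulty is the small-unit range, because the weight $2^{-\omega(D)}$ helps only on the large-unit side. To keep the two regimes compatible, I would reuse verbatim the small-unit estimate from the proof of Theorem~\ref{T:field-class-average}. That is, I would show the small-unit contribution there is already $\ll X^{3/2}\sqrt{\log\log X}/(\log X)^{2}$ even without the weight. If instead it is only of size $X^{3/2}\sqrt{\log\log X}/(\log X)^{3/2}$, I would reinsert the weight by applying the Pell-counting bound within the sieve framework on $D$ with a prescribed number of prime factors, or by H\"older's inequality against $\sum 4^{-\omega(D)}$, which saves a further power of $\log X$.
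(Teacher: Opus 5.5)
Your route is the paper's: the genus-theory weight $2^{-\omega(D)}$, a split by the size of the unit, the Wilson/Browning--Chan weighted estimate with the extra factor $1/\theta$ on the large-unit side, the weight dropped on the small-unit side, and the cutoff $\theta\asymp\sqrt{\log X/\log\log X}$. What is missing is the verification of the small-unit range, which you flag as the main obstacle but leave open. Your guess about the available input is also off. Theorem~\ref{T:uniform-exceptional} gives $E_\theta(X)\ll X^{1-c/\theta^2}$ for $3\leq\theta\leq c\sqrt{\log X/\log\log X}$, nothing like $X^{1/2+o(1)}$, so ``negligible'' is not automatic.

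The argument closes because of the computation at the top of the admissible range. For $\theta=c\sqrt{\log X/\log\log X}$ one has $X^{-c/\theta^2}=(\log X)^{-1/c}$. Cauchy--Schwarz, Lemma~\ref{L:order-second-moment} and $\log\varepsilon_D\gg\log D$ then bound the small-unit part by
\[
 \frac{X^{3/2}}{\log X}\Bigl(\frac{E_\theta(X)}{X}\Bigr)^{1/2}\ll\frac{X^{3/2}}{(\log X)^{1+1/(2c)}}\leq\frac{X^{3/2}}{(\log X)^2},
\]
since $c\leq1/2$. The pointwise bound $L(1,\chi_D)\ll\log D$ works equally well, giving $\sqrt X\,E_\theta(X)\ll X^{3/2}(\log X)^{-1/c}$. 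So no weight needs to be reinserted, and your sieve and H\"older fallbacks are unnecessary. Reusing ``verbatim'' the bound $(E_\theta(X)/X)^{1/2}\ll\theta^{-1}$ recorded in the proof of Theorem~\ref{T:field-class-average} would \emph{not} suffice. You need the full strength of \eqref{e:class-exceptional-decay}, which is exactly the observation the paper makes.

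Two smaller points. First, $E_\theta$ is defined through $\varepsilon_D$, not $\eta_D^+$. Either split on $\varepsilon_D$ and use $h^+(D)\leq3\sqrt D\,L(1,\chi_D)/\log\varepsilon_D$ from Lemma~\ref{L:unit-comparison}, as the paper does, or note that $\eta_D^+\leq D^\lambda$ implies $\varepsilon_D\leq D^{3\lambda}$ and shrink $\lambda$ by a factor $3$. Second, your initial suggestion $\lambda\approx\log X/\log\log X$ lies outside the range of Theorem~\ref{T:uniform-exceptional}, and even formally $X^{-c/\lambda^2}\to1$ there. Only the square-root size you settle on later is admissible.
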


\subsection{Size of fundamental solutions to Pell equations}

To bound the average class number using~\eqref{e:ordinary-class-formula}, we need to control how often $\log\eta_D^+$ is small.
We do this by studying the size of fundamental solutions of Pell's equation.
Hooley~\cite[Section~7]{hooley} used heuristics for the distribution of these solutions to formulate his conjecture on average class numbers.

For every positive nonsquare integer $D$, define the fundamental solution of Pell's equation by
\begin{equation}\label{e:pell-fundamental-unit}
 \varepsilon_D=\min_{\substack{t,u\in\ZZ_{>0}\\t^2-Du^2=1}}(t+u\sqrt D).
\end{equation}
Comparing~\eqref{e:pell-fundamental-unit} with~\eqref{e:totally-positive-unit} gives $\varepsilon_D=\eta_{4D}^+$.
For $D\in\mathcal D$, Lemma~\ref{L:unit-comparison} gives
\[
 \eta_D^+\leq\varepsilon_D\leq(\eta_D^+)^3.
\]
Thus lower bounds for $\log\varepsilon_D$ yield corresponding lower bounds for $\log\eta_D^+$, with only an absolute constant loss.
The classical Pell formulation~\eqref{e:pell-fundamental-unit} gives the factorization $(t-1)(t+1)=Du^2$ used in our counting argument.

For $\theta\geq0$, put
\begin{equation}\label{e:exceptional-count}
 E_\theta(X)=
 \#\{0<D<X:D\text{ is nonsquare and }\varepsilon_D\leq D^\theta\}.
\end{equation}
The elementary inequality $\varepsilon_D>\sqrt D$ gives $E_\theta(X)=0$ for $\theta\leq1/2$.
For $1/2<\theta\leq1$, Hooley~\cite[Theorem~1]{hooley} proved an asymptotic formula showing that
\[
 E_\theta(X) \sim c_\theta X^{1/2}(\log X)^2,
\]
for some explicit constant $c_\theta>0$.
In the same paper, Hooley~\cite[Conjecture~1]{hooley} conjectured an asymptotic formula of the same order for every fixed $\theta>1/2$.

The full conjecture remains out of reach with current techniques.
A simpler problem is to determine for which $\theta$ one can prove $E_\theta(X)=o(X)$ as $X\to\infty$.
Hooley~\cite[Corollary, p.~104]{hooley} showed that, for every $\theta<3/2$, the fundamental solution satisfies $\varepsilon_D>D^\theta$ for almost all positive nonsquare integers $D$.
Fouvry and Jouve~\cite[Corollary~1]{fj} studied this problem and extended the range to $\theta<7/4$.
Reuss~\cite[Corollary~10]{reuss} further extended it to any $\theta<3$; all references to his work use the numbering in his thesis.
The following theorem proves a power saving bound for $E_\theta(X)$ for every $\theta\geq3$, uniformly in a range where $\theta$ grows with $X$.

\begin{theorem}\label{T:uniform-exceptional}
There is an absolute constant $c>0$ such that, for all sufficiently large $X$,
\[
 E_\theta(X)\ll X^{1-c/\theta^2},
 \qquad
 \text{uniformly for }3\leq\theta\leq c\sqrt{\frac{\log X}{\log\log X}}.
\]
The implied constant is absolute.
\end{theorem}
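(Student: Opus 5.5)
We reduce the count $E_\theta(X)$ to counting integer points on the varieties $(t-1)(t+1)=Du^2$, and then apply a determinant-method bound uniformly in the factorization parameters. The losses have to stay polynomial in $\theta$ in the exponent, i.e.\ of size $X^{-c/\theta^2}$ savings.

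\emph{Setup.} Split $D\in(X/2,X]$ dyadically. If $\varepsilon_D=t+u\sqrt D\le D^\theta$, then $t\le X^\theta$ and $u\le X^{\theta-1/2}$. Since $\gcd(t-1,t+1)\mid2$, the identity $(t-1)(t+1)=Du^2$ gives a factorization $D=d_1d_2$, $u=u_1u_2$ (up to factors of $2$) with
\[
 t-1=d_1u_1^2,\qquad t+1=d_2u_2^2,
\]
hence $d_2u_2^2-d_1u_1^2=2$. Conversely, each $D$ is counted once for each such representation, so
\[
 E_\theta(2X)-E_\theta(X)\le\#\{(d_1,d_2,u_1,u_2): d_1d_2\asymp X,\ d_2u_2^2-d_1u_1^2=2,\ d_iu_i^2\le X^\theta\}.
\]
Both $d_1u_1^2$ and $d_2u_2^2$ are $\approx t$. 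We then split according to the sizes $d_1\approx X^{\alpha}$ and $d_2\approx X^{1-\alpha}$, which gives $u_1\approx X^{(\theta-\alpha)/2}$ and $u_2\approx X^{(\theta-1+\alpha)/2}$.

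\emph{Trivial ranges.} For fixed $d_1,d_2$, the equation $d_2y^2-d_1x^2=2$ is a Pell-type conic. Its solutions of height $\le X^\theta$ number $\ll\theta\log X$, since consecutive solutions grow by a factor at least the fundamental unit of $\mathbb Z[\sqrt{d_1d_2}]$, which is $\gg\sqrt X$. Summing over $(d_1,d_2)$ would cost $X^{1+o(1)}$, so this alone is useless. Instead we count by fixing other pairs of variables:
\begin{itemize}
 \item If $u_1$ and $u_2$ are fixed, then $d_2u_2^2\equiv2\pmod{u_1^2}$ determines $d_2$ modulo $u_1^2$. This is efficient when $u_1^2$ is large compared with the range of $d_2$, i.e.\ when $\theta-\alpha>1-\alpha$. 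Since $\theta\ge3$ this always holds, but we must also count the pairs $(u_1,u_2)$.
 \item Alternatively, fix $d_1$ and $u_1$ and count $d_2,u_2$ with $d_2u_2^2=2+d_1u_1^2$; this costs a divisor bound.
\end{itemize}
These elementary counts handle the ranges where one of $u_1,u_2$ is small, or $\alpha$ is near $0$ or $1$, with a power saving that is linear in $1/\theta$. This is essentially the Reuss-type argument, and it gives $\theta<3$.

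\emph{Main step (determinant method).} For the generic range, view $d_2u_2^2-d_1u_1^2=2$ with $d_1,d_2$ in boxes of size $X^\alpha$, $X^{1-\alpha}$ as a family of conics. We cover the points by residue classes modulo an auxiliary prime $p$, or by small boxes. In each piece, Bombieri--Pila/Heath-Brown style determinant arguments show that the integral points $(d_1,d_2,u_1,u_2)$ lie on a bounded number of auxiliary hypersurfaces of degree $k$, provided the box volume is below an explicit threshold depending on $k$. Choosing $k\asymp\theta$ makes the monomial count balance the heights $X^{\theta}$. The saving is then of the form $X^{-c/k^2}$ per piece, which yields $X^{1-c/\theta^2}$. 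On each auxiliary hypersurface we intersect with the Pell equation and count with the conic bound $\ll\theta\log X$ per $(d_1,d_2)$, using that the intersection cuts down the $(d_1,d_2)$ freedom.

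\emph{Uniformity and main obstacle.} The constants in the determinant method, such as the number of monomials $\asymp k^4$ and the factorial factors, grow like $k^{O(k)}$. This must stay below $X^{c/\theta^2}$, which forces $\theta^{O(\theta)}\le X^{c/\theta^2}$, i.e.\ $\theta^3\log\theta\ll\log X$. This is close to the stated range $\theta\le c\sqrt{\log X/\log\log X}$, and we would tune the parameters so that the loss is $X^{O(1/\theta^2)}$ rather than $k^{O(k)}$.

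I expect the hardest step to be controlling the points lying on the auxiliary hypersurfaces uniformly, since they could contain whole families of solutions. I would first try to show that a degree-$k$ auxiliary form not vanishing identically on the variety meets each Pell conic, for fixed $(d_1,d_2)$, in $\le 2k$ points. That reduces the problem to counting pairs $(d_1,d_2)$, which we sum with the dyadic decomposition.
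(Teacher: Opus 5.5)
The skeleton you describe (cover the solutions by small pieces, find an auxiliary form on each piece, count on it) has the right shape. However, both load-bearing steps are only asserted, and as stated they do not work. In the paper's notation your $(d_1,d_2,u_1,u_2)$ is $(u,v,d,e)$.

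\textbf{The determinant step.} No determinant estimate puts these points on boundedly many hypersurfaces of degree $k\asymp\theta$ with saving $X^{-c/k^2}$. A Heath-Brown type argument on the threefold $d_2u_2^2-d_1u_1^2=2$ yields a number of auxiliary hypersurfaces that grows like a power of the box, and the box has volume $X^{\theta+1/2}$, far above the target $X$. The sharpest form of your idea is Reuss's approximate determinant method on the normalized parabola $t\approx s^2$, with $s=d/e$, $t=v/u$ and optimized bidegree. It gives \eqref{e:reuss-box-estimate}, which has no power saving once $Y\geq(UV)^{9/2}$.

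The paper gets past this by taking the auxiliary form linear in $(d,e)$. Then $N_1-2N_2=a\in\{0,1\}$ on the coefficient grid, and the relation $(N_1-2N_2)(N_1-2N_2-1)=0$ reduces every Taylor monomial with $j\geq2$ to lower-weight monomials with $j\leq1$. Lemma~\ref{L:quantitative-two-layer-determinant} then makes the integer minors vanish already for $M=(UV)^{1-1/(50\theta^2)}<UV$. In this scheme the degree in $(u,v)$ has to be of order $\theta^2$, not $\theta$; the paper takes $B=\lceil\theta^2\rceil$. Indeed, with $q\approx\theta$ the gain $\Phi=\tfrac{B+1}2-\tfrac{q^2}{4p}-(1-p)B(B+1)$ is positive only when $B\gtrsim\theta^2/2$. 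The admissible range of $\theta$ comes from the $O(B^2\log B)$ losses and the logarithmic factors in the sieve.

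\textbf{Counting on the auxiliary varieties.} Your plan here is circular. B\'ezout bounds the points on each conic with $(d_1,d_2)$ fixed, but there are $\asymp X$ such pairs, and the problem is precisely to show that few of them carry a small solution. The missing idea is elimination. Write $\Psi=D_0(u,v)\bigl(dF_1(u,v)+eF_2(u,v)\bigr)$ with $F_1,F_2$ coprime. Using $\gcd(d,e)=1$, Lemma~\ref{L:linear-elimination} turns $e^2v-d^2u=h$ into $h(vF_1^2-uF_2^2)=(hg)^2\neq0$, so $(u,v)$ must make an odd-degree binary form a nonzero square. The square sieve (Corollary~\ref{C:anisotropic-square}) counts these in reduced lattice coordinates, and Lemma~\ref{L:intrinsic-completion} sums over the intervals. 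The directions with $D_0(u,v)=0$ are handled by Lemma~\ref{L:binary-quadratic-representations}.

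\textbf{The unbalanced range.} Your elementary counts do not cover $\alpha$ near $0$ or $1$ once $\theta\geq3$. Fixing $(u_1,u_2)$ leaves $X^{\theta-1/2}$ pairs, and fixing $(d_1,u_1)$ leaves $X^{(\theta+\alpha)/2}\geq X^{3/2}$. The congruence count of Lemma~\ref{L:direct-count} saves only while the square root attached to the large factor is at most $X^{1-\delta}$, that is, while $\varepsilon_D\ll X^{3-2\delta}$. This range needs its own argument (Proposition~\ref{L:uniform-unbalanced-count}). One fixes the small factor $a$ and factors $n^2+a$ as $\mu^2\nu=k^2\zeta$ in $\QQ(\sqrt{-a})$, giving a bidegree $(2,1)$ equation in primitive pairs. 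The improved determinant method and the square sieve are then rerun in embedding coordinates.
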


We do not attempt to optimize $c$, since a better value would only improve the implied constants in Theorems~\ref{T:field-class-average} and~\ref{T:odd-class-average}.

The following result is a direct corollary of Theorem~\ref{T:uniform-exceptional}.
\begin{corollary}\label{C:pell-unit-growth}
As $X\to\infty$, almost all  nonsquare integers $0<D<X$ satisfy
\[
 \log\varepsilon_D
 \gg\frac{(\log D)^{3/2}}{\sqrt{\log\log D}}.
\]
In particular, for any fixed $\theta>0$, almost all such $D$ satisfy $\varepsilon_D>D^\theta$.
\end{corollary}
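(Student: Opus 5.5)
We deduce the corollary from Theorem~\ref{T:uniform-exceptional} by letting $\theta$ grow with $X$ at the top of the admissible range. Put
\[
 \theta=\theta(X)=c\sqrt{\frac{\log X}{\log\log X}},
\]
where $c$ is the constant of Theorem~\ref{T:uniform-exceptional}. For $X$ large this satisfies $\theta\geq3$, so the theorem gives
\[
 E_\theta(X)\ll X^{1-c/\theta^2}=X\exp\Bigl(-\frac{\log\log X}{c}\Bigr)=\frac{X}{(\log X)^{1/c}}=o(X).
\]
Hence all but $o(X)$ nonsquare $D<X$ satisfy $\varepsilon_D>D^{\theta(X)}$.

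We now make the bound depend on $D$ rather than $X$. Discard the $D\leq X/\log X$, of which there are $o(X)$. For the remaining $D$ we have $\log D=\log X+O(\log\log X)$, so $\log D\asymp\log X$ and $\log\log D\asymp\log\log X$. Then
\[
 \log\varepsilon_D>\theta(X)\log D\gg\sqrt{\frac{\log D}{\log\log D}}\,\log D=\frac{(\log D)^{3/2}}{\sqrt{\log\log D}}.
\]
Here we used that $t\mapsto t/\log t$ is increasing for large $t$, together with the comparability of $\log D$ and $\log X$, to replace $\theta(X)$ by the same expression in $D$ up to an absolute constant. This proves the first assertion.

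For the second assertion, fix $\theta_0>0$. Since $(\log D)^{3/2}/\sqrt{\log\log D}$ divided by $\log D$ tends to infinity, every $D$ satisfying the first assertion with $D$ large enough in terms of $\theta_0$ also satisfies $\log\varepsilon_D>\theta_0\log D$. The finitely many small $D$ form an $o(X)$ set, so the claim follows.

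There is no real obstacle. The only point needing care is the uniformity of Theorem~\ref{T:uniform-exceptional} at the endpoint $\theta\asymp\sqrt{\log X/\log\log X}$, which the theorem grants with an absolute implied constant.
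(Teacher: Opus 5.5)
Your proposal is correct and is exactly the intended deduction: the paper gives no separate argument, calling the corollary a direct consequence of Theorem~\ref{T:uniform-exceptional}. Your choice of $\theta$ at the top of the admissible range is the same as in~\eqref{e:class-average-parameter} and gives $E_\theta(X)\ll X(\log X)^{-1/c}$. (Discarding $D\leq X/\log X$ is unnecessary: $D<X$ and the monotonicity of $t\mapsto t/\log t$ already give $\theta(X)\geq c\sqrt{\log D/\log\log D}$ for all but finitely many $D$.)
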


For $1<\theta\leq3/2$, Fouvry~\cite[Theorem~1]{fouvry} proved the lower bound
\[
 E_\theta(X)\geq\bigl(b_\theta-o(1)\bigr)X^{1/2}(\log X)^2,
\]
for an explicit constant $b_\theta>0$ depending on $\theta$.
Bourgain~\cite{bourgain} and Xi~\cite{xi} further improved the leading constant within the range $1<\theta\leq3/2$.
Fouvry and Jouve~\cite{fj-prescribed} also proved lower bounds of this order with $\varepsilon_D$ restricted to prescribed intervals.

\subsection*{Notation}

The letter $\varepsilon$ denotes an arbitrarily small positive number, which may change at each occurrence.
The letter $C$ denotes a positive constant that may have a different value at each occurrence.
When specifying dyadic ranges, we write $u\sim U$ to mean $u\in[U,2U)$.
For a nonzero integer $n$, let $\tau(n)$ denote the number of its positive divisors.
An integer pair $(x_1,x_2)$ is primitive if $\gcd(x_1,x_2)=1$.
For vectors, $\lVert\cdot\rVert$ denotes the Euclidean norm.
For a matrix $T$, we put $\lVert T\rVert=\sup_{\lVert\mathbf v\rVert=1}\lVert T\mathbf v\rVert$.
For a polynomial $F$, $\lVert F\rVert$ is the largest absolute value of its coefficients.

\subsection*{Supplementary Lean code}

The Lean code accompanying this paper contains a formalization of Theorem~\ref{T:field-class-average}, Theorem~\ref{T:odd-class-average}, and Theorem~\ref{T:uniform-exceptional} based only on Mathlib.
It is available at the following repository:
\begin{center}
\url{https://github.com/s6yidiao-cloud/pell-lean}
\end{center}

\subsection*{Use of artificial intelligence}

The paper was originally written solely by the author.
The results were subsequently formalized in Lean using GPT-5.6 Sol.
The formalization led to a simpler proof strategy for Theorem~\ref{T:uniform-exceptional}: fixing one auxiliary degree at $1$ and varying the other, instead of optimizing both.
This allows us to count points on the auxiliary curves using the square sieve in place of the original approach inspired by Reuss~\cite{reuss} (see Section~\ref{s:main-counting-argument}).
GPT-5.6 Sol and GPT-6.1 Sol were also used to edit the paper.

\subsection*{Acknowledgements}

I would like to thank my advisor, Tim Browning, for introducing me to this problem in the first year of my PhD in 2023, and for his support and many helpful discussions over the years.

\section{Overview of the proof}\label{s:proof-overview}

We first prove Theorem~\ref{T:uniform-exceptional}.
In Section~\ref{s:main-proofs}, we use the class number formula to deduce Theorems~\ref{T:field-class-average} and~\ref{T:odd-class-average}.

\subsection{The dyadic counting function}\label{s:pell-factorizations}

We begin with squarefree $w>1$ and write $\varepsilon_w=t_w+u_w\sqrt w$.
Pell's equation gives $(t_w-1)(t_w+1)=wu_w^2$.
The factors $t_w-1$ and $t_w+1$ have greatest common divisor $1$ or $2$.
After dividing by it, we write each quotient as a square times a squarefree integer.
When $t_w$ is odd, this gives
\[
 \frac{t_w+1}{2}=e^2v,\qquad
 \frac{t_w-1}{2}=d^2u,\qquad
 uv=w,\qquad e^2v-d^2u=1,
\]
with $u_w=2de$.
When $t_w$ is even, both $w$ and $u_w$ are odd, and we have
\[
 t_w+1=e^2v,\qquad
 t_w-1=d^2u,\qquad
 uv=w,\qquad e^2v-d^2u=2,
\]
with $u_w=de$.
In either case, $u$ and $v$ are coprime and squarefree.
Interchanging the two factors when necessary gives
\begin{equation}\label{e:factor-equation}
 e^2v-d^2u=h,\qquad h\in\{\pm1,\pm2\}.
\end{equation}
Since $\gcd(d,e)^2$ divides $h$, we also have $\gcd(d,e)=1$.

Each small fundamental solution yields such a tuple $(d,e,u,v)$ of bounded size, so an upper bound for these tuples bounds the exceptional set.
As in Reuss~\cite[Section~2.6]{reuss}, we divide the variables into dyadic ranges.
For $D,E,U,V\geq1$ and $h\in\{\pm1,\pm2\}$, let $N_h(D,E,U,V)$ count the positive integral solutions of~\eqref{e:factor-equation} with $u,v$ coprime and squarefree and
\begin{equation}\label{e:scales}
 u\sim U,\quad v\sim V,\quad
 d\sim D,\quad e\sim E.
\end{equation}
If the box contains a solution, then $D^2U\asymp E^2V$ by~\eqref{e:factor-equation}, and $UV\leq uv<4UV$.
We put
\begin{equation}\label{e:dyadic-size}
 Y=DE\sqrt{UV} \asymp D^2 U \asymp E^2 V.
\end{equation}
For tuples arising from $\varepsilon_w$, the factorizations above give $uv=w$ and $u_w=de$ or $2de$.
Since $\varepsilon_w=t_w+u_w\sqrt w$ and $t_w=\sqrt{1+wu_w^2}$, this gives $Y\asymp\varepsilon_w$, so the bound on $\varepsilon_w$ translates to a bound on $Y$.
Summing over the four values of $h$, we write
\begin{equation}\label{e:dyadic-count}
 N(D,E,U,V)=\sum_{h\in\{\pm1,\pm2\}}N_h(D,E,U,V).
\end{equation}

To bound $E_\theta(X)$ from~\eqref{e:exceptional-count}, we first count squarefree $w$ with $\varepsilon_w\leq w^\theta$.
The corresponding boxes satisfy $Y\leq(4UV)^\theta$.
We pass to general nonsquare integers in Section~\ref{s:uniform-exceptional}, using the squarefree estimate with parameter $2\theta$.
Since $w=uv\asymp UV$, our aim is a bound of the form
\[
 N(D,E,U,V)\ll(UV)^{1-\delta(\theta)},
\]
uniformly for as large a range of $\theta$ as possible.

\subsection{The main counting argument}\label{s:main-counting-argument}

The determinant method is a general tool for counting integral and rational points of bounded height on algebraic varieties.
It constructs auxiliary polynomial equations satisfied by the points, reducing the count to varieties of smaller dimension.
The method originated in the work of Bombieri and Pila~\cite{bombieri-pila} and was developed further by Heath-Brown~\cite{hb-determinant} and Salberger~\cite{salberger-determinant}, among others.
For an introduction to the Bombieri--Pila determinant method, see Bloom and Lichtman~\cite{bloom-lichtman}.

Heath-Brown introduced the approximate determinant method in his work on sums and differences of three $k$th powers~\cite{hb-three-powers} and later applied it to squarefree values of $n^2+1$~\cite{hb-squarefree}.
This variant counts rational points lying close to an algebraic curve.
Reuss~\cite[Chapter~2]{reuss} extended this approach to pairs of $k$-free numbers and small solutions of Pell's equation.
For a recent development concerning rational points close to nonsingular plane curves of higher degree, see Adiceam and Marmon~\cite{adiceam-marmon}.

By~\eqref{e:factor-equation}, the ratios $s=d/e$ and $t=v/u$ satisfy
\[
 t-s^2=\frac{h}{e^2u},
\]
so the points lie close to a parabola after normalization.
For a parameter $M\geq1$, we divide the range of $s$ into $O(M)$ intervals of length $D/(EM)$ and, on each interval, form a matrix by evaluating the monomials
\[
 s^at^b,\qquad 0\leq a\leq A,\quad 0\leq b\leq B.
\]
Expanding about the parabola bounds the determinants formed from these monomials in terms of the interval length and the error $t-s^2$.
After clearing denominators, these determinants are integers, so a bound smaller than one forces them to vanish.
The columns then satisfy a nontrivial linear relation, giving an auxiliary equation
\begin{equation}\label{e:overview-auxiliary-equation}
 \Psi(d,e;u,v)=0
\end{equation}
for all points in the interval, where $\Psi$ is homogeneous of degree $A$ in $(d,e)$ and of degree $B$ in $(u,v)$.
In the ratios $(s,t)$, this equation defines an auxiliary curve.

We improve Reuss's bound for these determinants by using relations among the Taylor coefficients to show that additional terms in the determinant expansion vanish.
Section~\ref{s:determinant-example} illustrates this improvement in the case $A=1$, $B=2$, before we prove the general estimate.
This extra cancellation allows us to take fewer intervals, with $M<UV$, even when $Y$ is too large for Reuss's bound~\eqref{e:reuss-box-estimate} below to give a power saving.

Reuss~\cite[Section~2.2.1]{reuss} varied both auxiliary degrees $A$ and $B$, choosing $A/B$ close to $\log(UV)/\log(DE)$.
We fix $A=1$ and vary $B$.
This keeps the dependence on the auxiliary degree explicit and allows us to eliminate $(d,e)$.
First set aside the factors of the auxiliary form that depend only on $(u,v)$.
The remaining equation has the form
\[
 dF_1(u,v)+eF_2(u,v)=0,
\]
where $F_1,F_2$ are coprime integral homogeneous forms of the same degree $b\leq B$.
Since $\gcd(d,e)=1$, we have $(F_2(u,v),-F_1(u,v))=g(d,e)$ for some integer $g$.
The coprimality of $F_1,F_2$ implies that they cannot both vanish at $(u:v)$, so $g\ne0$.
Substituting into $e^2v-d^2u=h$ gives
\[
 h\bigl(vF_1(u,v)^2-uF_2(u,v)^2\bigr)
 =(hg)^2\ne0.
\]
Thus we count pairs $(u,v)$ for which a binary form of odd degree takes a nonzero square value.
Each $(u,v)$ gives at most two primitive $(d,e)$, as shown in Lemma~\ref{L:linear-elimination}.
We use Heath-Brown's square sieve~\cite{hb-square-sieve}, in the form of Corollary~\ref{C:anisotropic-square}, to count these square values in reduced lattice coordinates.
The resulting bound has polynomial dependence on $\theta$.

\subsection{Balanced and unbalanced ranges}\label{s:dyadic-bounds}

We first recall Reuss's bound for $N(D,E,U,V)$.
Suppose that $D,E>1$ and
\[
 \log(DE)\asymp\log(UV),
 \qquad DE\gg (UV)^{1/2},
\]
with fixed implied constants.
Reuss used a parameter $M$ defined by
\[
 \log M=\frac98\frac{\log(DE)\log(UV)}{\log Y}.
\]
For any $\varepsilon>0$ and sufficiently large $Y$, Reuss~\cite[Theorem~5]{reuss} proved
\begin{equation}\label{e:reuss-box-estimate}
 N(D,E,U,V)
 \ll_{\varepsilon}Y^\varepsilon
 \min((DEM)^{1/2}+D+E,\ (UVM)^{1/2}+U+V).
\end{equation}
The terms $U+V$ also occur in our estimate in Proposition~\ref{P:balanced-box} and prevent the required saving when one squarefree factor is too small.
We treat the following two ranges separately, first describing the bounds for fixed $\theta\geq3$.

In the balanced ranges, namely when
\[
 \min(u,v)>(UV)^{1/(100\theta^2)},
\]
Reuss's estimate~\eqref{e:reuss-box-estimate} gives no power saving once $Y\geq(UV)^{9/2}$.
For $Y=DE\sqrt{UV}\leq(4UV)^\theta$, Proposition~\ref{P:balanced-ranges} implies, under its stated hypotheses,
\[
 N(D,E,U,V)
 \ll_{\theta,\varepsilon}(UV)^{1-1/(200\theta^2)+\varepsilon}.
\]

In the unbalanced ranges, namely when
\[
 \min(u,v)\leq(UV)^{1/(100\theta^2)},
\]
Reuss~\cite[Section~2.6, p.~61]{reuss} used congruence estimates, but his bound gives no power saving for $Y\geq(UV)^3$.
We fix the smaller factor $v$, interchanging the factors if necessary.
Writing $n=ev$ and $a=-hv$, equation~\eqref{e:factor-equation} becomes
\begin{equation}\label{e:unbalanced-equation}
 d^2w=n^2+a,\qquad |a|\asymp v.
\end{equation}
For tuples arising from $\varepsilon_w$, we will show that $a$ is squarefree and $a\ne-1$ (see Section~\ref{s:unbalanced}).
We study the counting function
\begin{equation}\label{e:unbalanced-count}
 N_a(D,W)=\#\left\{(d,w,n)\in\ZZ_{>0}^3:
 \begin{array}{l}
 d^2w=n^2+a,\quad d\sim D,\\
 w\sim W,\quad w\ \text{squarefree}
 \end{array}\right\},
\end{equation}
at scales $W\asymp UV$.
When $d$ is small, we count the possible values of $n$ using the congruence $n^2\equiv-a\pmod{d^2}$.
For larger $d$, we factor $n^2+a$ in $\QQ(\sqrt{-a})$ and apply the improved approximate determinant method and square sieve in the resulting coordinates (see Sections~\ref{s:quadratic-preliminaries} and~\ref{s:unbalanced}).
Proposition~\ref{L:uniform-unbalanced-count} implies, under its stated hypotheses,
\[
 N_a(D,W)\ll_{\theta,\varepsilon}|a|^{7/8}W^{1-1/(32\theta^2)+\varepsilon}.
\]
For $w\sim UV$ with $\varepsilon_w\leq w^\theta$, summing over the values of $a$ and using Lemma~\ref{L:direct-count} for small $d$ shows that the unbalanced contribution is
\[
 \ll_{\theta,\varepsilon}(UV)^{1-1/(80\theta^2)+\varepsilon},
\]
which also gives a power saving over $UV$.

As $\theta$ grows in Theorem~\ref{T:uniform-exceptional}, a fixed $\varepsilon>0$ eventually exceeds the saving of order $\theta^{-2}$ in the exponent.
We therefore need uniform estimates with explicit logarithmic factors, which can be absorbed in this range.
These estimates are stated and proved in Sections~\ref{s:balanced} and~\ref{s:unbalanced}, as Propositions~\ref{P:balanced-ranges} and~\ref{L:uniform-unbalanced-count}, respectively.

\section{Prerequisites}\label{s:counting-tools}

This section collects estimates for counting integral points on the intersection of~\eqref{e:factor-equation} and the auxiliary equation~\eqref{e:overview-auxiliary-equation}.
The square sieve below is used in Sections~\ref{s:balanced} and~\ref{s:unbalanced} after eliminating the first coordinate pair from an auxiliary equation.
The lattice estimates allow us to sum the resulting counts over the short intervals used in the determinant method.
When an auxiliary factor depends only on the second coordinate pair, fixing that pair leaves a binary quadratic equation in the first.
The last estimate in this section gives a uniform bound for its solutions.

\subsection{A square sieve}\label{s:square-sieve}

We follow Heath-Brown~\cite[proof of Lemma~2.2]{hb-squarefull}, adapting his argument to rectangles.
Let $f\in\ZZ[X,Y]\setminus\{0\}$ be a homogeneous form with odd degree.
Let $I_1,I_2$ be real intervals with lengths
\[
 S_1=|I_1|,\qquad S_2=|I_2|,\qquad 1\leq S_2\leq S_1.
\]
Put
\[
 W=2+\max_{(x,y)\in(I_1\times I_2)\cap\ZZ^2}|f(x,y)|.
\]
The maximum is taken to be zero if the rectangle contains no integer points.
\begin{lemma}\label{L:anisotropic-square}
There is an absolute constant $C>0$ such that, if
\[
 S_2^{1/2} \geq C\log W,
\]
then
\[
 \#\{(x,y)\in (I_1\times I_2)\cap\ZZ^2:0\ne f(x,y)=\square\}
 \ll \log^2(2S_2) S_1S_2^{1/2}.
\]
The implied constant is absolute.
\end{lemma}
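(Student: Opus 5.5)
We follow Heath-Brown's square sieve with a set of primes of size about $P$, where $P$ will be chosen as a small power of $S_2$. Write $\mathcal N$ for the count in the statement. Let $\mathcal P$ be the set of primes in $(P,2P]$ that do not divide the content of $f$ and do not divide its discriminant or leading coefficients. Care is needed here, since $P$ must exceed these bad primes; we handle this by discarding the $O(\log W/\log P)$ primes dividing a fixed nonzero value of $f$, together with the coefficients. For each point with $0\ne f(x,y)=\square$, we have $\leg{f(x,y)}{p}=1$ for every $p\in\mathcal P$ not dividing $f(x,y)$. Since $|f(x,y)|\leq W$, at most $\log W/\log P$ primes of $\mathcal P$ divide $f(x,y)$. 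The condition $S_2^{1/2}\geq C\log W$ ensures that this is at most $\tfrac12\#\mathcal P$ when $P\asymp S_2^{1/2}$, since $\#\mathcal P\gg P/\log P$. Hence
\[
 \mathcal N\,(\#\mathcal P)^2\ll\sum_{(x,y)}\Bigl|\sum_{p\in\mathcal P}\leg{f(x,y)}{p}\Bigr|^2 .
\]
Here the outer sum runs over $(x,y)\in(I_1\times I_2)\cap\ZZ^2$, so the weights are nonnegative and we have extended to all points.

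Expanding the square, the diagonal $p=q$ contributes $\ll S_1S_2\#\mathcal P$. For $p\ne q$ we get the sums $\sum_{(x,y)}\leg{f(x,y)}{pq}$. We complete these modulo $pq$ in each variable separately, since $S_1,S_2\geq1$ may be shorter than $pq\asymp P^2$. By the Chinese remainder theorem the complete two-variable sums factor as $\prod_{\ell\in\{p,q\}}\sum_{x,y\bmod \ell}\leg{f(x,y)}{\ell}e_\ell(\alpha x+\beta y)$.

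The key point is the odd degree of $f$. Using homogeneity, the sum $\sum_{x,y}\leg{f(x,y)}{\ell}$ with trivial additive twist vanishes. Indeed, substituting $(x,y)\mapsto(\lambda x,\lambda y)$ multiplies the character value by $\leg{\lambda}{\ell}^{\deg f}=\leg{\lambda}{\ell}$, and averaging over $\lambda$ kills the sum. For nontrivial twists, Weil-type bounds give that each local sum is $O(\ell)$, with an absolute constant. To obtain this, we write the sum along lines through the origin as a product of a Gauss sum and a one-variable character sum, and absorb the degree dependence via the condition $p\nmid$ discriminant, or more crudely by a direct evaluation. Completing then gives
\[
 \sum_{(x,y)}\leg{f(x,y)}{pq}\ll \Bigl(\frac{S_1}{pq}+1\Bigr)\Bigl(\frac{S_2}{pq}+1\Bigr)pq\,\log^2(pq),
\]
with the zero frequency vanishing. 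This step, obtaining a degree-uniform bound so that the implied constant is absolute, is the main obstacle. I would try to use only the Gauss-sum structure coming from homogeneity, combined with a Weil bound whose constant is linear in the degree; the degree is then absorbed since $\deg f\ll\log W$.

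With $P\asymp S_2^{1/2}$ we have $pq\asymp S_2\leq S_1$, so the off-diagonal term is $\ll S_1\log^2 S_2$ per pair. Altogether,
\[
 \mathcal N\ll\frac{S_1S_2}{\#\mathcal P}+S_1\log^2(2S_2)\ll S_1S_2^{1/2}\log(2S_2)+S_1\log^2(2S_2),
\]
which gives the stated bound $\ll\log^2(2S_2)S_1S_2^{1/2}$.
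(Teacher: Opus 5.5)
Your architecture is the paper's: Heath-Brown's square sieve with primes of size $\asymp S_2^{1/2}$, completion modulo $pq$, and the scaling $(x,y)\mapsto(\lambda x,\lambda y)$, which uses the odd degree to kill the zero frequency and extract a Gauss sum. The gap is the local bound behind your completion estimate.

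You assert that $S_\ell(\alpha,\beta)=\sum_{x,y\bmod\ell}\leg{f(x,y)}{\ell}e_\ell(\alpha x+\beta y)=O(\ell)$ for every $(\alpha,\beta)\not\equiv(0,0)$, with an absolute constant. This is false. The scaling identity gives $S_\ell(\alpha,\beta)=\frac{\tau_\ell}{\ell-1}\sum_{x,y\bmod\ell}\leg{f(x,y)(\alpha x+\beta y)}{\ell}$. The last sum has absolute value $\ell^2+O(\ell\deg f)$ whenever $f\cdot(\alpha x+\beta y)$ is a nonzero constant times a square modulo $\ell$. For the extremal example $f=Y^3$ with $(\alpha,\beta)=(0,1)$, the local sum is exactly $\ell\tau_\ell$, of modulus $\ell^{3/2}$. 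Correspondingly, $\sum_{x\in I_1,\,y\in I_2}\leg{y^3}{pq}$ is $\#(I_1\cap\ZZ)$ times an incomplete sum of $\leg{\cdot}{pq}$. In your regime $pq\asymp S_2$ this can be of order $S_1(pq)^{1/2}$, far above your claimed $O(S_1\log^2(pq))$.

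The repairs you sketch do not close this.
\begin{itemize}
\item Discarding primes that divide the discriminant or the leading coefficients empties $\mathcal P$ for $f=Y^3$, where both vanish. For general $f$ you give no bound on the number of such primes in terms of $\log W$.
\item A Weil bound, with any dependence on $\deg f$, is unavailable for the square-class twists just described.
\item Where Weil does apply, a factor $(\deg f)^2$ in the off-diagonal term could be absorbed only if $(\deg f)^2\ll S_2^{1/2}$. Even your unproved $\deg f\ll\log W$ gives only $(\deg f)^2\ll S_2$ under the hypothesis $S_2^{1/2}\geq C\log W$.
\end{itemize}

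The missing observation is that no Weil bound is needed. The Gauss sum already supplies the saving $\ell^{1/2}$. Bounding the residual sum trivially by the $\ell(\ell-1)$ pairs with $\alpha x+\beta y\ne0$ gives $|S_\ell(\alpha,\beta)|\leq\ell^{3/2}$. This holds uniformly in $f$, its degree, and its reduction modulo $\ell$, so no primes need to be discarded.

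For $m=pq$ this gives $S_m(0,0)=0$ and $|S_m(a,b)|\leq m^{3/2}$. Completion then bounds each off-diagonal sum by $\ll m^{3/2}\log^2(2m)+(S_1+S_2)m^{1/2}\log(2m)$. With $m\asymp P^2$ and $P=S_2^{1/2}$, this is $\ll S_1S_2^{1/2}\log^2(2S_2)$, since $P^3\leq S_1P$ and $(S_1+S_2)P\leq 2S_1P$. That is the same order as the diagonal term $S_1S_2\log P/P$.

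This balance, not a small off-diagonal term, is what dictates $P=S_2^{1/2}$. It is the paper's argument, and with the corrected local bound your final estimate goes through.
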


For $f(X,Y)=Y^3$ and $I_1=[0,S_1], I_2=[0,S_2]$, the count is $\sim S_1S_2^{1/2}$.
Thus the bound in Lemma~\ref{L:anisotropic-square} is sharp up to logarithmic factors.

\begin{proof}
Let $N$ denote the count in Lemma~\ref{L:anisotropic-square} and choose $P\geq C\log W$.
Put
\[
 \mathcal P=\{p\text{ prime}:P<p\leq4P\}.
\]
For sufficiently large $C$, we have $P\geq2$, $\#\mathcal P\asymp P/\log P$, and $(\log W)/\log P\leq\#\mathcal P/2$.
Thus each nonzero square value is divisible by at most half the primes in $\mathcal P$.
Heath-Brown's square sieve~\cite[Proof of Theorem~1]{hb-square-sieve} therefore gives
\begin{equation}\label{e:anisotropic-detector}
 N\ll\frac{S_1S_2\log P}{P}+\frac{(\log P)^2}{P^2}
 \sum_{\substack{p,q\in\mathcal P\\p\ne q}}
 \left|\sum_{(x,y)\in(I_1\times I_2)\cap\ZZ^2}\leg{f(x,y)}{pq}\right|.
\end{equation}

For odd $m\geq3$, write $e_m(t)=\exp(2\pi i t/m)$ and put
\[
 S_m(a,b)=\sum_{x,y\bmod m}\leg{f(x,y)}m e_m(ax+by).
\]
Odd homogeneity allows us to average over $(x,y)\mapsto(tx,ty)$.
For a prime $p$, the scaling identity~\cite[(2.7), p.~8]{hb-squarefull} gives
\[
 S_p(a,b)=\frac{\tau_p}{p-1}
 \sum_{x,y\bmod p}\leg{f(x,y)(ax+by)}p,
 \qquad |\tau_p|=p^{1/2}.
\]
Here $\tau_p$ is a quadratic Gauss sum.
If $(a,b)\not\equiv(0,0)\pmod p$, at most $p(p-1)$ terms are nonzero.
If $(a,b)\equiv(0,0)$, they all vanish.
For $m=pq$ with distinct $p,q\in\mathcal P$, the product formula~\cite[(2.6), p.~7]{hb-squarefull} therefore gives
\[
 S_m(0,0)=0,\qquad |S_m(a,b)|\leq m^{3/2}.
\]
These bounds require no assumptions on the factorization of $f$ modulo $p$ or $q$.

For a real interval $I$ of length at least $1$, put
\[
 \widehat I(a)=\sum_{n\in I\cap\ZZ}e_m(-an).
\]
Uniformly in $|I|$, the usual bound for geometric sums gives
\[
 \widehat I(0)=\#(I\cap\ZZ)\ll |I|,
 \qquad
 \sum_{\substack{a\bmod m\\a\ne0}}|\widehat I(a)|\ll m\log(2m).
\]
In Fourier inversion, the term with both frequencies zero vanishes because $S_m(0,0)=0$.
Separating the terms with exactly one zero frequency from those with both frequencies nonzero gives
\[
 \sum_{(x,y)\in(I_1\times I_2)\cap\ZZ^2}\leg{f(x,y)}m
 \ll m^{3/2}\log^2(2m)
       +(S_1+S_2)m^{1/2}\log(2m).
\]
Substituting this into~\eqref{e:anisotropic-detector} gives, for every $P\geq C\log W$,
\begin{equation}\label{e:square-sieve-parameter-bound}
 N\ll
 \frac{S_1S_2\log(2P)}P
 +P^3\log^2(2P)+(S_1+S_2)P\log(2P).
\end{equation}
By the hypothesis, we may take $P=S_2^{1/2}$.
Then $S_1S_2/P=S_1P$, $P^3\leq S_1P$ and $(S_1+S_2)P\leq2S_1P$.
\end{proof}

For the applications in Sections~\ref{s:balanced} and~\ref{s:unbalanced}, we use the following form, with a hypothesis on the product $S_1S_2$.
\begin{corollary}\label{C:anisotropic-square}
There is an absolute constant $C>0$ such that, if
\[
 (S_1S_2)^{1/4} \geq C\log W,
\]
then
\[
 \#\{(x,y)\in (I_1\times I_2)\cap\ZZ^2:0\ne f(x,y)=\square\}
 \ll \log^2(2S_1S_2) S_1^{5/4}S_2^{1/4}.
\]
The implied constant is absolute.
\end{corollary}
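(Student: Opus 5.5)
We deduce Corollary~\ref{C:anisotropic-square} from Lemma~\ref{L:anisotropic-square} by splitting into two cases according to the size of $S_2$ relative to $S_1$. The target bound $S_1^{5/4}S_2^{1/4}$ compares with the lemma's bound $S_1S_2^{1/2}$ as follows:
\[
 S_1S_2^{1/2}\leq S_1^{5/4}S_2^{1/4}
 \quad\Longleftrightarrow\quad S_2\leq S_1 .
\]
This always holds under our standing assumption $S_2\leq S_1$. So whenever the lemma applies, it already gives the corollary, since $\log^2(2S_2)\leq\log^2(2S_1S_2)$. The only issue is the lemma's hypothesis $S_2^{1/2}\geq C\log W$, whereas we are only given $(S_1S_2)^{1/4}\geq C\log W$.

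\textbf{Case 1: $S_2^{1/2}\geq C\log W$.} We apply Lemma~\ref{L:anisotropic-square} directly and conclude by the comparison above.

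\textbf{Case 2: $S_2^{1/2}<C\log W$, so the rectangle is thin.} Put $L=(S_1S_2)^{1/2}$. Then
\[
 L^{1/2}=(S_1S_2)^{1/4}\geq C\log W,
 \qquad
 L\geq S_2 .
\]
We cover $I_1$ by $O(S_1/L+1)\ll S_1/L$ subintervals $J$ of length at most $L$; note $S_1\geq L$ since $S_2\leq S_1$. For each piece, $J\times I_2$ is still too thin for the lemma, so instead we enlarge $I_2$ to an interval $I_2'\supseteq I_2$ of length $L$. The enlarged rectangle $J\times I_2'$ has both sides of length $L$.

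The main obstacle is that enlarging $I_2$ may increase $W$, the maximum of $|f|$, on the bigger rectangle. This only affects a logarithm. Since $f$ is a fixed form with integer coefficients, the new maximum $W'$ satisfies
\[
 \log W'\ll \log W+\log(2L),
\]
with a constant depending only on $\deg f$ and the coefficient size of $f$. Both of these are already controlled by $\log W$ in the applications, because $W$ bounds the values of $f$ at integer points in the original rectangle. If that inequality is awkward to justify uniformly, I would instead shrink $C$ and exploit the slack: $L^{1/2}$ exceeds $C\log W$ while $\log L$ is far smaller than $L^{1/2}$. Alternatively, one can avoid enlarging altogether by rerunning the proof of Lemma~\ref{L:anisotropic-square} with $P=L^{1/2}$.

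That last route is in fact the cleanest. Inequality~\eqref{e:square-sieve-parameter-bound} holds for every $P\geq C\log W$ and uses only the original rectangle, so $W$ is unchanged. Taking $P=(S_1S_2)^{1/4}$ gives
\[
 N\ll \frac{S_1S_2\log(2P)}{P}+P^3\log^2(2P)+(S_1+S_2)P\log(2P).
\]
Using $S_2\leq S_1$, each term is bounded by the target:
\[
 \frac{S_1S_2}{P}=(S_1S_2)^{3/4}\leq S_1^{5/4}S_2^{1/4},
 \qquad
 P^3=(S_1S_2)^{3/4}\leq S_1^{5/4}S_2^{1/4},
 \qquad
 (S_1+S_2)P\leq 2S_1^{5/4}S_2^{1/4}.
\]
With $\log(2P)\ll\log(2S_1S_2)$, this gives the corollary in all cases. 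We therefore adopt this direct substitution as the proof and drop the case split entirely.
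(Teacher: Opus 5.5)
Your final argument is correct and is exactly the paper's proof: substitute $P=(S_1S_2)^{1/4}$ into \eqref{e:square-sieve-parameter-bound}, which holds for every $P\geq C\log W$ on the original rectangle, and use $S_2\leq S_1$ to bound each of the three terms by $S_1P=S_1^{5/4}S_2^{1/4}$. The preliminary case split and the enlargement idea, including the unjustified claim $\log W'\ll\log W+\log(2L)$, are unnecessary and can simply be deleted.
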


\begin{proof}
The assumption allows us to use~\eqref{e:square-sieve-parameter-bound} with $P=(S_1S_2)^{1/4}$.
Since $P^2=(S_1S_2)^{1/2}\leq S_1$, we have $S_1S_2/P=P^3\leq S_1P$ and $(S_1+S_2)P\leq2S_1P$.
\end{proof}

\subsection{Geometry of numbers}\label{s:lattice-counting}

For a rank two lattice $\Lambda\subseteq\mathbb R^2$, write $\det\Lambda$ for its determinant or covolume.
For $i=1,2$, its $i$-th successive minimum $\lambda_i(\Lambda)$ is the least $R>0$ for which $\Lambda$ contains $i$ linearly independent vectors of Euclidean norm at most $R$.
We choose a reduced basis $(\mathbf b_1,\mathbf b_2)$ with $\lVert\mathbf b_i\rVert\asymp\lambda_i(\Lambda)$.
The standard basis estimates~\cite[Section~3.2, pp.~72--74]{browning-cubic} give, for $\mathbf v=z_1\mathbf b_1+z_2\mathbf b_2$,
\begin{equation}\label{e:reduced-basis-bounds}
 \det\Lambda\asymp\lambda_1(\Lambda)\lambda_2(\Lambda),
 \qquad
 |z_i|\ll\frac{\lVert\mathbf v\rVert}{\lambda_i(\Lambda)},
 \quad i=1,2.
\end{equation}

We need bounds for the changes of basis to control the coefficients of auxiliary forms (see the proofs of Proposition~\ref{P:balanced-ranges} and Lemma~\ref{L:unbalanced-value-bound}).
Let $T$ be an invertible real $2\times2$ matrix, whose columns generate the lattice $T\ZZ^2$.
A reduced basis of this lattice has the form $TA$, with $A\in\mathrm{GL}_2(\ZZ)$.

For every nonzero $\mathbf n\in\ZZ^2$, we have
\[
 1\leq\lVert\mathbf n\rVert
 =\lVert T^{-1}(T\mathbf n)\rVert
 \leq\lVert T^{-1}\rVert\lVert T\mathbf n\rVert.
\]
Thus every nonzero vector of $T\ZZ^2$ has norm at least $\lVert T^{-1}\rVert^{-1}$.
The columns of $T$ give two linearly independent vectors in this lattice, each of norm at most $\lVert T\rVert$.
By the definition of the successive minima,
\[
 \lambda_1(T\ZZ^2)\geq\lVert T^{-1}\rVert^{-1},
 \qquad
 \lambda_2(T\ZZ^2)\leq\lVert T\rVert.
\]
Estimate~\eqref{e:reduced-basis-bounds} now gives
\begin{equation}\label{e:reduced-change-bounds}
 \begin{aligned}
 \lVert A\rVert+\lVert A^{-1}\rVert
 &\ll1+\lVert T\rVert\lVert T^{-1}\rVert,\\
 |z_i|&\ll\lVert\mathbf v\rVert\lVert T^{-1}\rVert,
 \qquad i=1,2,
 \end{aligned}
\end{equation}
when $\mathbf v=TA\mathbf z$.
Here we use $A=T^{-1}(TA)$ and $\lVert A^{-1}\rVert=\lVert A\rVert$ for $A\in\mathrm{GL}_2(\ZZ)$.

Let $\Lambda\subseteq\mathbb R^2$ be a lattice of rank two, and suppose that
\[
 \lambda_1(\Lambda)\geq1,\qquad R\geq2,\qquad M\geq1.
\]
For a nonzero vector $(x,y)$, we call $x/y$ its slope, taking the value $\infty$ when $y=0$.
Partition a fixed bounded range of the slope into intervals $I=[s_0,s_0+M^{-1})$.
For each interval, define $\Lambda_I$ as the image of $\Lambda$ under
\begin{equation}\label{e:intrinsic-interval-map}
 (x,y)\mapsto
 \left(\frac{M}{R^{1/2}}(x-s_0y),
       \frac{y}{R^{1/2}}\right).
\end{equation}
This map has determinant $M/R$ and sends points with $|y|\asymp R^{1/2}$ and $x/y\in I$ into a fixed bounded rectangle $\mathcal R$.
The intervals to be counted are
\[
 \mathcal I=\{I:\Lambda_I\cap(\mathcal R\setminus\{\mathbf0\})\ne\varnothing\}.
\]
We use the following bound to sum the point counts over these intervals.
\begin{lemma}\label{L:intrinsic-completion}
We have
\[
 \sum_{I\in\mathcal I}\frac1{\lambda_1(\Lambda_I)}
 \ll R^{1/2}\log(2R)
 +\left(\frac{RM}{\det\Lambda}\right)^{1/2}.
\]
\end{lemma}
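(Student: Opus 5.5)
Since $\det\Lambda_I=(M/R)\det\Lambda$ is the same for every $I$, I split $\mathcal I$ by the size of $\lambda_1(\Lambda_I)$. Let $\mathcal R$ have diameter $\ll1$. If $\lambda_1(\Lambda_I)\geq c_0(M\det\Lambda/R)^{1/2}$ for a small constant $c_0$, then $1/\lambda_1(\Lambda_I)\ll(R/(M\det\Lambda))^{1/2}$. The number of intervals is $O(M)$, so these intervals contribute $\ll M(R/(M\det\Lambda))^{1/2}=(RM/\det\Lambda)^{1/2}$, which is the second term. It remains to treat intervals with $\lambda_1(\Lambda_I)$ small compared with $\sqrt{\det\Lambda_I}$. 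Only intervals with $\lambda_1(\Lambda_I)\ll1$ matter, because $I\in\mathcal I$ forces a nonzero point of $\Lambda_I$ in $\mathcal R$.

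For such an interval, the shortest vector of $\Lambda_I$ is the image of a primitive vector $\mathbf v=(x,y)\in\Lambda$. I will show that a single primitive vector $\mathbf v$ is shortest for only a bounded number of intervals, and that its image has length $\gg$ something controlled by $y$. Write $\lambda_1(\Lambda_I)=\lVert\phi_I(\mathbf v)\rVert\asymp R^{-1/2}\max(M|x-s_0y|,|y|)$. In particular $1/\lambda_1(\Lambda_I)\leq R^{1/2}/|y|$ when $y\neq0$. When $y=0$ we get $1/\lambda_1\leq R^{1/2}/(M|x|)\leq R^{1/2}$, using $\lambda_1(\Lambda)\geq1$, and there are $O(1)$ such primitive $\mathbf v$ up to sign. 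For $y\ne0$, the requirement $\lambda_1\ll1$ forces $|x/y-s_0|\ll R^{1/2}/(M|y|)$. Hence $\mathbf v$ serves as the shortest vector of $\Lambda_I$ only for intervals within distance $\ll R^{1/2}/(M|y|)$ of its slope, and there are $\ll1+R^{1/2}/|y|$ of them. A sharper bookkeeping avoids the factor $1+R^{1/2}/|y|$: for these intervals, $M|x-s_0y|$ ranges over an arithmetic progression of step $|y|$, so $\sum_I 1/\max(M|x-s_0y|,|y|)\ll(1/|y|)\log(2R)$. Each primitive $\mathbf v$ with $|y|\ll R^{1/2}$ therefore contributes $\ll R^{1/2}\log(2R)/|y|$.

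I then sum over primitive $\mathbf v\in\Lambda$ with $1\leq|y|\ll R^{1/2}$ and bounded slope, so that $\lVert\mathbf v\rVert\ll R^{1/2}$. This sum is too crude without further input, and this is the main obstacle: distinct short vectors of $\Lambda$ must not be double counted. The fix is to charge each interval $I$ only to its shortest vector, and to compare that vector with $\mathbf v'$, the second basis vector of the reduced basis of $\Lambda_I$. Since $\lambda_1\lambda_2\asymp\det\Lambda_I$ and $\lambda_2\ll1$, we have $\lambda_1(\Lambda_I)\gg M\det\Lambda/R$. So I would reorganize by dyadic size $\lambda_1\sim 2^{-j}$. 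For fixed $j$, the intervals with $\lambda_1(\Lambda_I)\sim2^{-j}$ have distinct shortest vectors $\mathbf v$ with $|y|\ll2^{-j}R^{1/2}$. Each such $\mathbf v$ accounts for $\ll1+2^{-j}R^{1/2}/|y|$ intervals. The vectors $\mathbf v$ lie in a region of area $\ll 2^{-j}R^{1/2}\cdot R^{1/2}/M\cdot M$, that is, the union of thin sectors of total area $\ll2^{-j}R$. By the lattice point count in~\eqref{e:reduced-basis-bounds}, there are $\ll 1+2^{-j}R^{1/2}+2^{-j}R/\det\Lambda$ of them. Using $\lambda_1(\Lambda)\geq1$ to get the $2^{-j}R^{1/2}$ term, the contribution of this $j$ is $2^{j}\cdot(\text{count})\ll R^{1/2}+R/\det\Lambda+\dots$.

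Summing over the $O(\log 2R)$ values of $j$ yields $R^{1/2}\log(2R)$. The term $R/\det\Lambda$ is only needed while $2^{j}\ll(R/(M\det\Lambda))^{1/2}$, which is the second term and matches the first case. Checking that the interval-multiplicity and area counts combine without losing more than $\log(2R)$ is the delicate step. If it fails, I would fall back on the explicit arithmetic-progression summation described above.
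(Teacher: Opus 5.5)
Your strategy is the paper's: dyadic classes $\lambda_1(\Lambda_I)\sim2^{-j}$, pulling the shortest vector of $\Lambda_I$ back to a primitive $\gamma=(x,y)\in\Lambda$, bounding how many grid intervals can share it, and using $\lambda_1(\Lambda)\geq1$ and Minkowski. Your first case is correct: intervals with $\lambda_1(\Lambda_I)\gg(M\det\Lambda/R)^{1/2}$ are handled by the trivial count $O(M)$. The gap is the central count in the remaining classes, which you leave unverified and which is wrong as written.

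In the class $\lambda_1(\Lambda_I)\sim2^{-j}$ one has $|y|\ll H_j:=2^{-j}R^{1/2}$ and $|x-s_0y|\ll H_j/M$. Since $s_0$ is bounded, $\lVert\gamma\rVert\ll H_j$, so the relevant vectors lie in a disc of area $\asymp2^{-2j}R$, not $2^{-j}R$. Moreover, the number of intervals in the class is not a lattice-point count in a region. It is the weighted sum $\sum_\gamma(1+H_j/\lVert\gamma\rVert)$ over primitive $\gamma$ with $\lVert\gamma\rVert\ll H_j$, and you never sum the multiplicities against the distribution of $\lVert\gamma\rVert$. With your figures, each surviving class contributes $2^j\cdot2^{-j}R/\det\Lambda=R/\det\Lambda$, with no decay in $j$. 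This is too large. For $\Lambda=\ZZ^2$ and $M=R^{1/2}$, the classes with $R^{1/4}\ll2^j\ll R^{1/2}$ survive your first case, and your estimate gives $R$ for each of them, against the target $R^{1/2}\log(2R)+R^{3/4}$. Your remark that this term is only needed while $2^j\ll(R/(M\det\Lambda))^{1/2}$ is backwards: that is precisely the range your first case already removed.

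The missing step is the paper's partial summation. The primitive-point bound $\#\{\gamma\in\Lambda\text{ primitive}:\lVert\gamma\rVert\leq H\}\ll1+H^2/\det\Lambda$, together with $\lambda_1(\Lambda)\geq1$, gives $\sum_{\lVert\gamma\rVert\leq H_j}(1+H_j/\lVert\gamma\rVert)\ll H_j+H_j^2/\det\Lambda$ intervals in the class. Multiplying by $2^j$ gives $R^{1/2}+2^{-j}R/\det\Lambda$. The first term, summed over the $O(\log 2R)$ classes, gives $R^{1/2}\log(2R)$. The second is geometric in $j$, and since $2^j\gg\max(1,(R/(M\det\Lambda))^{1/2})$ it sums to $O((RM/\det\Lambda)^{1/2})$.

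Two further points need care. First, your multiplicity $1+H_j/|y|$ and weight $1/|y|$ blow up when $|y|$ is small compared with $\lVert\gamma\rVert$. A general $\Lambda$ with $\lambda_1(\Lambda)\geq1$ allows this, whereas your sum assumes $|y|\geq1$. The paper splits off this case: there $|x-s_0y|\gg\lVert\gamma\rVert$ for every interval, which forces $\lVert\gamma\rVert\ll H_j/M$, so the trivial count $O(M)\ll H_j/\lVert\gamma\rVert$ suffices. Second, your arithmetic-progression fallback can be made to work, but only after restricting to $\lVert\gamma\rVert\ll(M\det\Lambda)^{1/2}$ and keeping the factor $\log\bigl(2(M\det\Lambda)^{1/2}/|y|\bigr)$ rather than $\log(2R)$. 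The restriction is automatic in the surviving classes, where $R^{1/2}\lambda_1(\Lambda_I)\ll(M\det\Lambda)^{1/2}$. As you stated it, summing $R^{1/2}\log(2R)/|y|$ over all $|y|\ll R^{1/2}$ yields a term $R\log(2R)/\det\Lambda$. Even with the restriction, the crude $\log(2R)$ costs a logarithm in the second term.
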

For $I\in\mathcal I$, the lattice $\Lambda_I$ contains a nonzero vector in the fixed bounded rectangle $\mathcal R$, so $\lambda_1(\Lambda_I)\ll1$.
The lemma therefore also bounds the number of intervals in $\mathcal I$.
Both bounds hold for intervals of length $M^{-1}$ in a smooth function of the slope $x/y$, provided the absolute value of its derivative is bounded above and bounded away from zero.
The implied constants depend only on the range of $x/y$, $\mathcal R$, and the upper and lower bounds for the absolute value of the derivative.

\begin{proof}
A nonzero vector of $\Lambda$ is primitive if it is not an integer multiple of another lattice vector by a factor greater than one.
For $H\geq1$, applying~\cite[Lemma~2.7]{hb-squarefull} in lattice coordinates gives
\[
 \#\{\gamma\in\Lambda:\gamma\text{ primitive},\ \lVert\gamma\rVert\leq H\}
 \ll1+\frac{H^2}{\det\Lambda}.
\]

Group the intervals in $\mathcal I$ into dyadic ranges $1/\lambda_1(\Lambda_I)\sim T$, and choose a shortest nonzero vector in each $\Lambda_I$.
Its inverse image in $\Lambda$ is primitive, and the map~\eqref{e:intrinsic-interval-map} bounds its length by $H_1$, where
\[
 1\leq H_1\asymp\frac{R^{1/2}}{T}.
\]
For a fixed primitive $(x,y)\in\Lambda$ of length $r$, the possible interval endpoints satisfy $|x-s_0y|\ll H_1/M$.
Choose $C$ sufficiently large in terms of the fixed range of $x/y$.
If $|y|\geq r/C$, the possible left endpoints $s_0$ lie in an interval of length $O(H_1/(Mr))$, giving $O(1+H_1/r)$ grid intervals.
Otherwise $|x-s_0y|\gg r$, so $r\ll H_1/M$ and the total number of intervals, $O(M)$, satisfies the same bound.
Summing these multiplicities over the primitive vectors of length at most $H_1$ bounds the number of intervals in the chosen dyadic range.
Partial summation of the primitive lattice point bound, using $\lambda_1(\Lambda)\geq1$, gives
\[
 \sum_{\substack{\gamma\in\Lambda\ \mathrm{primitive}\\\lVert\gamma\rVert\leq H_1}}
 \left(1+\frac{H_1}{\lVert\gamma\rVert}\right)
 \ll H_1+\frac{H_1^2}{\det\Lambda}.
\]
By~\eqref{e:intrinsic-interval-map}, $\det\Lambda_I=(M/R)\det\Lambda$.
A nonzero vector of $\Lambda_I$ in $\mathcal R$ gives $\lambda_1(\Lambda_I)\ll1$, and~\eqref{e:reduced-basis-bounds} gives
\[
 T\gg\max\left(1,
 \left(\frac{R}{M\det\Lambda}\right)^{1/2}\right).
\]
Also $T\ll R^{1/2}$, since $\lambda_1(\Lambda)\geq1$, so there are $O(\log(2R))$ dyadic classes.
Multiplying the interval count by $T$ gives a contribution of
\[
 O\left(R^{1/2}+\frac{R}{T\det\Lambda}\right)
\]
from one dyadic class.
The first term sums to $O(R^{1/2}\log(2R))$, while the second is a geometric sum bounded by $O((RM/\det\Lambda)^{1/2})$ using the lower bound for $T$.

Under the stated change of parameter, each interval is covered by $O(1)$ intervals of length $M^{-1}$ in the slope $x/y$, and conversely.
For two such intervals $I,I'$ with left endpoints satisfying $|s_0-s_0'|\ll M^{-1}$, the change of normalized coordinates is the shear
\[
 (z_1,z_2)\mapsto\bigl(z_1+M(s_0-s_0')z_2,z_2\bigr).
\]
Since $M|s_0-s_0'|\ll1$, this map and its inverse have norm at most a constant $C\geq1$ depending only on the derivative bounds.
It follows that
\[
 C^{-1}\lambda_i(\Lambda_I)
 \leq\lambda_i(\Lambda_{I'})
 \leq C\lambda_i(\Lambda_I),
 \qquad i=1,2.
\]

The shear also sends $\mathcal R$ into a fixed bounded rectangle.
Applying the bound just proved with this enlarged rectangle, and using the bounded overlap of the interval coverings, proves the claim.
\end{proof}

\subsection{Binary quadratic equations}\label{s:quadratic-representations}

A factor of the auxiliary form $\Psi(d,e;u,v)$ from~\eqref{e:overview-auxiliary-equation} that depends only on $(u,v)$ fixes finitely many directions $(u:v)$.
For each corresponding primitive $(u,v)$, equation~\eqref{e:factor-equation} is a binary quadratic equation in $(d,e)$.
We use the following bound in Sections~\ref{s:balanced} and~\ref{s:unbalanced}.

Let $F$ be a primitive integral binary quadratic form, and write
\[
 F(x_1,x_2)=Ax_1^2+Bx_1x_2+Cx_2^2,
 \qquad D=B^2-4AC.
\]
Assume that $D>0$ and $D$ is not a square.
Let $w>1$ be the squarefree integer such that $\QQ(\sqrt D)=\QQ(\sqrt w)$.
For $m\in\ZZ\setminus\{0\}$ and $X\geq1$, put
\[
 N_F(m;X)=\#\{(x_1,x_2)\in\ZZ^2:|x_i|\leq X,\ F(x_1,x_2)=m\}.
\]
Khayutin, Nelson, and Steiner~\cite[(10.1)]{khayutin-nelson-steiner} proved
\[
 N_F(m;X)\ll\tau(m)\log(2X\lVert F\rVert).
\]
We record the following refinement.

\begin{lemma}\label{L:binary-quadratic-representations}
We have
\[
 N_F(m;X)\ll\tau(m)\frac{\log(2X\lVert F\rVert)}{\log(2w)},
\]
with an absolute implied constant.
\end{lemma}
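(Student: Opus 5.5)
We follow the standard orbit decomposition behind the bound of Khayutin, Nelson and Steiner, keeping track of the size of the fundamental unit. Since $|x_i|\leq X$, we may assume $m\ne0$ and $D$ is not a square. The solutions of $F(x_1,x_2)=m$ correspond to elements of norm $m/A$ (after scaling) in the module $\mathfrak a_F=\ZZ A+\ZZ\frac{-B+\sqrt D}{2}$. Equivalently, the integral automorph group $\mathrm{SO}_F(\ZZ)$ acts on the solution set. Up to $\pm1$ this group is infinite cyclic, generated by the matrix of multiplication by the totally positive unit $\eta_D^+$ of the order $\mathcal O_D$.

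The first step bounds the number of orbits of solutions (primitive or not) by $\ll\tau(m)$. We write a solution as $g^2\cdot$(primitive solution of $F=m/g^2$). Primitive representations of $m'$ by $F$, modulo automorphs, inject into the roots $b\bmod 2m'$ of $b^2\equiv D\pmod{4m'}$ with $b\equiv B$ suitably. There are $\ll 2^{\omega(m')}$ of these for primitive $F$. Summing over $g^2\mid m$ gives $\ll\tau(m)$ orbits. This is classical and is presumably the same input as in~\cite{khayutin-nelson-steiner}.

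The second step counts points of one orbit in the box $|x_i|\leq X$. Embed $(x_1,x_2)\mapsto(\alpha,\alpha')$ via the conjugate linear factors $\alpha=\ell(x)$, $\alpha'=\ell'(x)$ of $F=A^{-1}\ell\ell'$. Then $|\alpha\alpha'|=|Am|$, and the automorph multiplies $\alpha$ by $\eta=\eta_D^+$ and $\alpha'$ by $\eta^{-1}$. For points in the box, $|\alpha|,|\alpha'|\ll X\lVert F\rVert^{O(1)}$, so $\log|\alpha|$ ranges over an interval of length $\ll\log(2X\lVert F\rVert)$. Here we use $|\alpha|\geq|Am|/|\alpha'|\gg (X\lVert F\rVert)^{-O(1)}$, since $|Am|\geq1$. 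Consecutive orbit points differ by $\log\eta$ in $\log|\alpha|$, together with the sign $\pm$. So each orbit contributes $\ll1+\log(2X\lVert F\rVert)/\log\eta_D^+$ points.

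The last step is to compare $\log\eta_D^+$ with $\log(2w)$. The quantity $\eta_D^+$ is a unit $>1$ of $\QQ(\sqrt w)$, hence a power of the fundamental unit of $\ZZ[\sqrt w]$ or of $\mathcal O_{\QQ(\sqrt w)}$. It satisfies $\eta_D^+\geq\varepsilon_{\mathcal O_K}\gg\sqrt w$, because a unit $(a+b\sqrt w)/2>1$ has $a,b\geq1$ and hence is $\geq(1+\sqrt w)/2$ up to the norm $-1$ case. Thus $\log\eta_D^+\gg\log(2w)$, which gives
\[
N_F(m;X)\ll\tau(m)\Bigl(1+\frac{\log(2X\lVert F\rVert)}{\log(2w)}\Bigr).
\]
Absorbing the term $1$ requires $\log(2w)\ll\log(2X\lVert F\rVert)$. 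This holds since $w\leq D\leq 5\lVert F\rVert^2$.

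The main obstacle is the second step. Uniformity there requires $|\alpha|$ to be bounded below using only $|Am|\geq1$ and the box, with no loss depending on $D$ beyond powers of $\lVert F\rVert$. It also requires handling the sign $\pm1$ and the possibility of norm $-1$ automorphs, which at worst halves the step and costs only a constant.
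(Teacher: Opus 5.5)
Your Steps 2 and 3 are fine, but Step 1 is false in the generality the lemma needs, and this is a genuine gap. The injection of $\mathrm{SO}_F(\ZZ)$-orbits of primitive representations of $m'$ into roots $b\bmod 2m'$ of $b^2\equiv D\pmod{4m'}$ is correct. The number of such roots is $\ll 2^{\omega(m')}$ only when, say, $\gcd(m',D)=1$ or $D$ is fundamental. When $m'$ shares square factors with the conductor of $\mathcal O_D$, it can be of size $\sqrt{m'}$, and this happens already within a single primitive class.

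For example, take $F=x_1^2-2\cdot3^{2k}x_2^2$ (primitive, $D=8\cdot3^{2k}$, $w=2$) and $m=3^{2f}$ with $1\leq f<k$. Since $3$ is inert in $\QQ(\sqrt2)$, every representation satisfies $x_1+3^kx_2\sqrt2=\pm3^f\eta^n$ with $\eta=3+2\sqrt2$. One checks that $\eta^n\in\ZZ+3^j\ZZ[\sqrt2]$ if and only if $2\cdot3^{j-1}\mid n$, for $j\geq1$. Hence the representations are the $\pm3^f\eta^n$ with $2\cdot3^{k-f-1}\mid n$, while the automorphs correspond to $\pm\eta^n$ with $2\cdot3^{k-1}\mid n$. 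So there are $3^f=\sqrt m$ orbits, $3^f-3^{f-1}$ of them primitive, against $\tau(m)=2f+1$.

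Since Step 2 charges $O(1)$ to every orbit, Steps 1--2 can only give a bound of the form (number of orbits)$\cdot\bigl(1+\log(2X\lVert F\rVert)/\log\eta_D^+\bigr)$. With $\asymp\sqrt m$ orbits this is not $O(\tau(m)\log(2X\lVert F\rVert)/\log(2w))$; take for instance $f=k-1$ and $X=1$. Restricting to the orbits that actually meet the box would require an argument you do not supply. This case is not hypothetical for the paper. In Lemma~\ref{L:quadratic-unit-count} the lemma is applied to $F/g$ with discriminant $4wf^2/g^2$ and $m=\xi/g$, where $f=k/\ell$ and $\xi=k^2z_2/(r^2\ell)$ may share factors.

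The missing idea is to work in the maximal order $\mathcal O_L$ of $L=\QQ(\sqrt w)$, which coarsens your orbits. Put $\mathfrak b=(A,\frac{B+\sqrt D}2)\mathcal O_L$; primitivity of $F$ gives $\mathfrak b\bar{\mathfrak b}=(A)$. So $\beta=Ax_1+\frac{B+\sqrt D}2x_2$ determines an integral ideal $(\beta)\mathfrak b^{-1}$ of norm $|m|$, and there are at most $\tau(m)$ such ideals. The elements giving a fixed ideal are $\pm\eta_L^j\beta_0$, where $\eta_L$ is the least totally positive unit of $\mathcal O_L$. For an upper bound one simply ignores whether $\pm\eta_L^j\beta_0$ lies in the lattice $\ZZ A+\ZZ\frac{B+\sqrt D}2$. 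Your Steps 2 and 3, with $\eta_L$ in place of $\eta_D^+$, then finish the proof; this is the paper's argument.
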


\begin{proof}
We follow the proof of~\cite[(10.1)]{khayutin-nelson-steiner}.
Since $D$ is nonsquare, $A\ne0$.
In $L=\QQ(\sqrt w)$, put
\[
 \mathfrak b=\left(A,\frac{B+\sqrt D}{2}\right)\mathcal O_L,
 \qquad
 \beta=Ax_1+\frac{B+\sqrt D}{2}x_2.
\]
The element $(B+\sqrt D)/2$ is integral.
Using $\gcd(A,B,C)=1$ and $F(x_1,x_2)=m$, we obtain
\[
 \mathfrak b\bar{\mathfrak b}=(A),\qquad
 \Nm\mathfrak b=|A|,\qquad
 \Nm\beta=Am.
\]
Since $\beta\in\mathfrak b$, each representation determines an integral ideal $(\beta)\mathfrak b^{-1}$ of norm $|m|$.
There are at most $\tau(m)$ such ideals, and two elements determining the same ideal differ by a unit of norm $+1$.

For each ideal, fix one corresponding element $\beta_0$.
The others have the form
\[
 \beta=\pm\eta^k\beta_0,\qquad k\in\ZZ,
\]
where $\eta>1$ is the least totally positive unit of $L$.
By~\eqref{e:elementary-unit-lower-bounds}, applied to the fundamental discriminant of $L$, we have $\log\eta\gg\log(2w)$.
Since $\sqrt D\ll\lVert F\rVert$ and $|\Nm\beta|=|Am|\geq1$, the coordinate bounds imply
\[
 (X\lVert F\rVert)^{-1}\ll|\sigma(\beta)|\ll X\lVert F\rVert
\]
at both embeddings.
These bounds allow $O(\log(2X\lVert F\rVert)/\log(2w))$ integers $k$, since $w\leq D\ll\lVert F\rVert^2$.
There are at most two signs for each $k$.
The element $\beta$ determines $(x_1,x_2)$ uniquely, so summing over the ideals proves the lemma.
\end{proof}

\section{The approximate determinant method}\label{s:determinant-method}

\subsection{Reuss's approximate determinant method}\label{s:reuss-method}

We recall the determinant construction underlying Reuss's bound~\eqref{e:reuss-box-estimate} and record the normalization factors used below.
For a solution of~\eqref{e:factor-equation}, put
\[
 s=\frac de,\qquad t=\frac vu,
 \qquad
 t-s^2=\frac{h}{e^2u}.
\]
With $Y=DE\sqrt{UV}\asymp D^2U\asymp E^2V$ as in~\eqref{e:dyadic-size}, take $1\leq M\leq Y$ and partition the range of $s$ into intervals of length $D/(EM)$.
For an interval with right endpoint $s_0$, normalize the ratios by
\[
 S=\frac{s}{s_0},\qquad T=\frac{t}{s_0^2}.
\]
Since $s_0\asymp D/E$, equation~\eqref{e:factor-equation} gives
\begin{equation}\label{e:normalized-pell-relation}
 S=1+O(M^{-1}),\qquad
 T-S^2=\frac{h}{s_0^2e^2u}=O(Y^{-1}).
\end{equation}
Thus the normalized points lie close to the parabola $T=S^2$.

We seek a nonzero auxiliary form $\Psi(d,e;u,v)$ of bidegree $(A,B)$: homogeneous of degree $A$ in $(d,e)$ and of degree $B$ in $(u,v)$.
The positive integers $A,B$ will be chosen to make the relevant determinants vanish.
We evaluate the monomials $s^at^b$, with $0\leq a\leq A$ and $0\leq b\leq B$, at the points in this interval.
The number of monomials is
\[
 H=(A+1)(B+1).
\]
For any $H$ points, let $\Delta$ and $\Delta_0$ be the determinants formed from $s^at^b$ and $S^aT^b$, respectively.
Factoring $s_0^{a+2b}$ from the $(a,b)$-column of the matrix defining $\Delta$ gives
\begin{equation}\label{e:normalized-evaluation-determinant}
 \Delta=s_0^{H(A+2B)/2}\Delta_0.
\end{equation}
Put
\begin{equation}\label{e:integer-evaluation-determinant}
 \Delta_1
 =\left(\prod_{k=1}^H e_k^A u_k^B\right)\Delta.
\end{equation}
Multiplying row $k$ of the matrix defining $\Delta$ by $e_k^Au_k^B$ gives the integer entries $d_k^ae_k^{A-a}u_k^{B-b}v_k^b$, so $\Delta_1\in\ZZ$.
It therefore suffices to prove $|\Delta_1|<1$ for every choice of $H$ points, since then $\Delta_1=0$.
The evaluation matrix then has rank less than $H$, and a nontrivial integral relation among its columns gives the required auxiliary form.

To estimate $\Delta_0$, Reuss~\cite[Section~2.2.1]{reuss} expanded the entries in the local coordinates $S-1$ and $T-S^2$.
A Taylor monomial of degrees $i,j$ has size $O_{A,B}(M^{-i}Y^{-j})$.
By multilinearity, terms using the same Taylor monomial in two columns vanish.
Up to a factor depending on $A,B$, the determinant is bounded by the product of the $H$ largest such sizes, including the constant monomial.

Let $w_1\leq\cdots\leq w_H$ be the $H$ smallest weights $i\log M+j\log Y$ with $i,j\geq0$, counting distinct pairs separately even when their weights are equal.
When $\Delta_0\ne0$, this gives Reuss's determinant estimate
\begin{equation}\label{e:reuss-determinant-bound}
 \log|\Delta_0|\leq-\sum_{k=1}^H w_k+O_{A,B}(1).
\end{equation}

Since $s_0\asymp D/E$ and $D^2U\asymp E^2V$, the factors in~\eqref{e:normalized-evaluation-determinant} and~\eqref{e:integer-evaluation-determinant} contribute
\[
 \begin{aligned}
 &H(A\log E+B\log U)+\frac H2(A+2B)\log(D/E)\\
 &\qquad=\frac H2\bigl(A\log(DE)+B\log(UV)\bigr)+O_{A,B}(1).
 \end{aligned}
\]
Together with~\eqref{e:reuss-determinant-bound}, this gives
\begin{equation}\label{e:determinant-vanishing-criterion}
 \log|\Delta_1|\leq\frac H2(A\log(DE)+B\log(UV))-\sum_{k=1}^H w_k+O_{A,B}(1),
\end{equation}
whenever $\Delta_1\ne0$.
It therefore suffices to make the right hand side negative.
Reuss's optimization of the auxiliary degrees~\cite[Section~2.2.1, pp.~28--30]{reuss}, followed by counting on the resulting curves, gave the point count~\eqref{e:reuss-box-estimate} recalled in Section~\ref{s:dyadic-bounds}.

\subsection{An example}\label{s:determinant-example}

We first work out Reuss's bound~\eqref{e:reuss-determinant-bound} in the case $A=1$, $B=2$, and $M=Y$.
For six normalized points $(S_k,T_k)$, equation~\eqref{e:normalized-pell-relation} gives logarithmic coordinates
\begin{equation}\label{e:log-local-coordinates}
 \alpha_k=\log S_k=O(M^{-1}),\qquad
 \beta_k=\log(T_k/S_k^2)=O(M^{-1}).
\end{equation}
The determinant to be bounded is that of the matrix $\mathcal E$ with entries
\[
 \mathcal E_{k,(a,b)}
 =S_k^aT_k^b=\exp((a+2b)\alpha_k+b\beta_k),
\]
where $0\leq a\leq1$, $0\leq b\leq2$ and $1\leq k\leq6$.
Here the entrywise Taylor expansion is
\begin{equation}\label{e:log-taylor-expansion}
 \mathcal E_{k,(a,b)}
 =\sum_{i,j\geq0}\frac{\alpha_k^i\beta_k^j}{i!j!}(a+2b)^ib^j.
\end{equation}
The six largest Taylor monomials are those of total degree at most two.
Let $U_0$ be the $6\times6$ matrix whose $k$-th row is
\[
 \left(1,\alpha_k,\beta_k,\frac{\alpha_k^2}{2},
       \alpha_k\beta_k,\frac{\beta_k^2}{2}\right).
\]
For each $(a,b)$, put $N_1=a+2b$ and $N_2=b$.
Let $V_0$ be the $6\times6$ matrix whose column indexed by $(a,b)$ is
\[
 (1,N_1,N_2,N_1^2,N_1N_2,N_2^2)^t,
\]
with columns ordered by $(a,b)=(0,0),(1,0),(0,1),(1,1),(0,2),(1,2)$.
Their product is the Taylor polynomial of total degree two, entry by entry:
\[
 (U_0V_0)_{k,(a,b)}
 =\sum_{i+j\leq2}\frac{\alpha_k^i\beta_k^j}{i!j!}(a+2b)^ib^j.
\]

For an $H\times L$ matrix $U$ and an $L\times H$ matrix $V$, with $L\geq H$, the Cauchy--Binet formula states
\[
 \det(UV)=\sum_{\substack{\mathcal S\subseteq\{1,\ldots,L\}\\ \#\mathcal S=H}}
 \det U_{\mathcal S}\,\det V_{\mathcal S},
\]
where $U_{\mathcal S}$ consists of the columns of $U$ indexed by $\mathcal S$, and $V_{\mathcal S}$ of the corresponding rows of $V$, both in increasing index order.
Here $H=6$, and the intermediate indices are the pairs $(i,j)$ in the Taylor expansion.
Applying the Cauchy--Binet formula to finite truncations of~\eqref{e:log-taylor-expansion}, and then passing to the limit, gives
\[
 \det\mathcal E=\det U_0\,\det V_0+O(M^{-9}).
\]
Indeed, the six monomials used in $U_0$ have total degree $0+1+1+2+2+2=8$, and every other choice has total degree at least $9$.
The absolute convergence of~\eqref{e:log-taylor-expansion} bounds the sum of these remaining terms by $O(M^{-9})$.
The column sizes of $U_0$ and the fixed entries of $V_0$ give
\[
 \det U_0=O(M^{-8}),\qquad \det V_0=O(1).
\]
Using these two bounds recovers Reuss's estimate
\[
 \det\mathcal E\ll M^{-8}.
\]

However, $V_0$ is a singular matrix.
In fact, we have
\[
 V_0=
 \begin{pmatrix}
 1&1&1&1&1&1\\
 0&1&2&3&4&5\\
 0&0&1&1&2&2\\
 0&1&4&9&16&25\\
 0&0&2&3&8&10\\
 0&0&1&1&4&4
 \end{pmatrix}.
\]
Since $N_1-2N_2=a\in\{0,1\}$ in every column, we have
\begin{equation}\label{e:coefficient-grid-relation}
 (N_1-2N_2)(N_1-2N_2-1)
 =N_1^2-4N_1N_2+4N_2^2-N_1+2N_2=0.
\end{equation}
This is a linear relation among the rows of $V_0$, so $\det V_0=0$ and the contribution $\det U_0\det V_0$ vanishes identically.
The remainder bound now gives
\[
 \det\mathcal E\ll M^{-9}.
\]
This improves Reuss's bound by a factor of $M$.

\subsection{An improved determinant estimate}

For the reason explained in Section~\ref{s:main-counting-argument}, we fix $A=1$ and vary $B$.
In the preceding example, $M=Y$, so~\eqref{e:log-local-coordinates} bounds both logarithmic coordinates by $O(M^{-1})$.
We now introduce a separate parameter $X$ for the error in the quadratic approximation, and assume $|\alpha_k|\ll M^{-1}$ and $|\beta_k|\ll X^{-1}$.
We take $X=Y$ in the balanced ranges and $X=DW^{1/2}$ in Lemma~\ref{L:unbalanced-auxiliary}. 
The additional factor $|a|^{1/2}$ in the latter error bound is handled by Lemma~\ref{L:determinant-larger-error}.
The logarithmic coordinates~\eqref{e:log-local-coordinates} give
\[
 S_k=e^{\alpha_k},\qquad T_k=e^{2\alpha_k+\beta_k},\qquad
 S_k^aT_k^b=e^{(a+2b)\alpha_k+b\beta_k}.
\]
Thus we must bound the determinant of the values of the $2(B+1)$ monomials with $0\leq a\leq1$ and $0\leq b\leq B$ when $\alpha_k$ and $\beta_k$ are small on these two scales.

To use the same estimate in Section~\ref{s:unbalanced}, we allow complex logarithms, since the embedding ratios for an imaginary quadratic field lie on the unit circle.
We also allow row $k$ to be multiplied by $e^{-\sigma\alpha_k-\tau\beta_k}$.
The balanced application has $\sigma=\tau=0$, while normalizing by norms in the unbalanced ranges requires taking $\sigma=B+\tfrac12$ and $\tau=B/2$.
These nonzero row factors preserve the rank of the matrix.

Fix $C\geq1$, let $B\geq1$ be an integer, and recall that $H=2(B+1)$.
Let $M,X\geq2$ and $\sigma,\tau\in\tfrac12\ZZ$ satisfy
\begin{equation}\label{e:determinant-parameters}
 M<X,
 \qquad
 |\sigma|,|\tau|\leq2B+1.
\end{equation}
The condition $M<X$ makes the error scale smaller than the interval scale.
For $\alpha_k,\beta_k\in\mathbb C$, with $1\leq k\leq H$, define the $H\times H$ matrix $\mathcal E$ by its entries
\begin{equation}\label{e:general-log-evaluation-matrix}
 \mathcal E_{k,(a,b)}
 =
 \exp((a+2b-\sigma)\alpha_k+(b-\tau)\beta_k),
 \quad
 0\leq a\leq1,
 \quad
 0\leq b\leq B.
\end{equation}
Assume that the local coordinates satisfy
\begin{equation}\label{e:determinant-local-bounds}
 |\alpha_k|\leq CM^{-1},
 \qquad
 |\beta_k|\leq CX^{-1},
 \qquad 1\leq k\leq H.
\end{equation}
\begin{lemma}\label{L:quantitative-two-layer-determinant}
If $\det\mathcal E\ne0$, then
\[
 \log|\det\mathcal E|
 \leq
 -B(B+1)\log M-(B+1)\log X+\frac{(\log X)^2}{4\log M}+O(B^2),
\]
where the implied constant only depends on $C$.
\end{lemma}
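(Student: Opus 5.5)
The plan is to repeat the argument of Section~\ref{s:determinant-example} for general $B$ and general shifts $\sigma,\tau$. Put $N_1=a+2b-\sigma$ and $N_2=b-\tau$, so that
\[
 \mathcal E_{k,(a,b)}=\sum_{i,j\geq0}\frac{\alpha_k^i\beta_k^j}{i!\,j!}N_1^iN_2^j .
\]
Write $\mathcal E=UV$ with infinitely many intermediate indices $(i,j)$. Row $k$ of $U$ is $(\alpha_k^i\beta_k^j/(i!j!))_{i,j}$, and the column of $V$ indexed by $(a,b)$ is $(N_1^iN_2^j)_{i,j}$. Apply Cauchy--Binet to finite truncations and pass to the limit, as in the example. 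This gives
\[
 \det\mathcal E=\sum_{\mathcal S}\det U_{\mathcal S}\det V_{\mathcal S},
\]
summed over $H$-element sets $\mathcal S$ of pairs $(i,j)$. Only sets with $\det V_{\mathcal S}\neq0$ contribute.

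\textbf{Step 1 (algebraic constraint on $\mathcal S$).} Regard each row $(N_1^iN_2^j)$ of $V$ as the restriction of the monomial $x^iy^j$ to the grid
\[
 G=\{(a+2b-\sigma,\,b-\tau):a\in\{0,1\},\ 0\leq b\leq B\},
\]
which has $H$ points. If $\det V_{\mathcal S}\ne0$, these restrictions are linearly independent.

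On $G$ we have $x-2y=a+2\tau-\sigma$ with $a\in\{0,1\}$. Hence every polynomial of total degree $\leq n$ restricts to a function of the form $P(y)+aQ(y)$ with $\deg P\leq n$ and $\deg Q\leq n-1$. This generalizes the relation \eqref{e:coefficient-grid-relation}.

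So a nonsingular $\mathcal S$ satisfies, for every $n\geq0$,
\[
 \#\{(i,j)\in\mathcal S:i+j\leq n\}\leq 2n+1 .
\]
Similarly, the monomials with $j=0$ are distinct powers $x^i$, so at most $n+1$ of them have $i\leq n$.

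\textbf{Step 2 (the optimization).} Put $\lambda=\log X/\log M>1$. The size of $\det U_{\mathcal S}$ is governed by the weight
\[
 \sum_{(i,j)\in\mathcal S}\bigl((i+j)+j(\lambda-1)\bigr)\log M .
\]
We need to show this weight is at least
\[
 \bigl(B(B+1)+(B+1)\lambda-\lambda^2/4\bigr)\log M-O(B)
\]
for every admissible $\mathcal S$.

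The heuristic optimum takes, at each level $n\leq L$, the two cheapest monomials $x^n$ and $x^{n-1}y$. Above level $L$ it fills the remaining slots with pure powers of $x$; these cost more total degree but avoid the penalty $\lambda-1$. Optimizing over $L$, the minimum is a quadratic in $L$ whose minimum value produces the term $-\lambda^2/4$. Two checks are consistent with this: the pure-$x$ choice exceeds the target by $(\lambda/2-(B+1))^2$, and the greedy choice exceeds it by $(\lambda/2-1)^2$.

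To make this rigorous, I would use an exchange argument on the sorted elements of $\mathcal S$. Any monomial with $j\geq2$ can be replaced by a cheaper one without violating the counting constraints of Step~1. Once every element has $j\leq1$, the counting constraints reduce the problem to a one-parameter minimization of a quadratic in $L$, and the real-variable minimum gives the stated bound. I expect this combinatorial minimization, done cleanly and uniformly in $\lambda$ (including ranges where $\lambda$ exceeds $2(B+1)$ and the constraint on $L$ binds), to be the main obstacle.

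\textbf{Step 3 (analytic bookkeeping).} For $\det U_{\mathcal S}$, Hadamard's inequality together with \eqref{e:determinant-local-bounds} gives
\[
 |\det U_{\mathcal S}|\leq H^{H/2}\prod_{(i,j)\in\mathcal S}\frac{C^{i+j}M^{-i}X^{-j}}{i!\,j!} .
\]
For $\det V_{\mathcal S}$, the entries are bounded by $(3B+3)^{i+j}$, since $|N_1|,|N_2|\leq 3B+2$ by \eqref{e:determinant-parameters}.

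The factorials $1/(i!j!)$ absorb these growth factors. Consequently the sum over all nonsingular $\mathcal S$ converges and is dominated, up to a factor $e^{O(B^2)}$, by the minimal weight found in Step~2. Taking logarithms gives the stated inequality with error $O(B^2)$ depending only on $C$.

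The row factors $e^{-\sigma\alpha_k-\tau\beta_k}$ only shift $N_1,N_2$, and Step~1 is invariant under such shifts. Complex $\alpha_k,\beta_k$ change nothing, because only absolute values enter.
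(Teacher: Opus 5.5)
Your proposal is correct and is essentially the paper's proof: Cauchy--Binet on the Taylor expansion, reduction to monomials with $j\leq1$ via the grid relation coming from $a\in\{0,1\}$, minimization over the number $r$ of entries with $j=1$, and Hadamard's inequality with the factorials giving the $O(B^2)$ error. The differences are cosmetic: you encode the grid relation as the bound $2n+1$ for restrictions of total degree at most $n$ plus an exchange, where the paper explicitly rewrites $N_2^2$ into terms of smaller weight, and you absorb $\sigma,\tau$ into a shifted grid, where the paper factors them out as row factors.

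The step you flag as the main obstacle is not one. With $p=\log M$ and $q=\log X$, the identity $\frac{p}{2}\bigl((H-r)(H-r-1)+r(r-1)\bigr)+rq=B(B+1)p+(B+1)q-\frac{q^2}{4p}+p\bigl(B+1-r-\frac{q}{2p}\bigr)^2$ holds for every $r$, so dropping the square gives the bound uniformly in $\lambda$. The exchange is clean if you always move the lowest-level element with $j\geq2$ into a free slot with $j\leq1$ at no higher total degree. Also, $|a+2b-\sigma|\leq4B+2$ rather than $3B+2$, which is harmless.
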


\begin{proof}
First suppose that $\sigma=\tau=0$ and put
\[
 p=\log M,\qquad q=\log X.
\]
The coefficient grid is
\[
 \Gamma_B=\{(N_1,N_2)=(a+2b,b):0\leq a\leq1,\ 0\leq b\leq B\}.
\]
Give $N_1^iN_2^j$ the weight $ip+jq$, corresponding to the size $M^{-i}X^{-j}$ of its Taylor factor.
The relation~\eqref{e:coefficient-grid-relation} gives, on $\Gamma_B$,
\[
 N_2^2=N_1N_2-\frac14N_1^2+\frac14N_1-\frac12N_2.
\]
Since $q>p>0$, the four terms on the right have weights $p+q,2p,p,q$, all strictly smaller than $2q$.
Multiplying by $N_1^iN_2^{j-2}$ and repeating reduces every monomial with $j\geq2$ to a linear combination of monomials with $j=0,1$, without increasing the weight.
To span the coefficient vectors up to any weight, it is therefore enough to use monomials from the two rows $j=0,1$ up to that weight.
For $p,q>0$, define
\[
 W_B(p,q)
 =\min_{\substack{\mathcal S\subseteq\ZZ_{\geq0}\times\{0,1\}\\ \#\mathcal S=2(B+1)}}
 \sum_{(i,j)\in\mathcal S}(ip+jq).
\]
This is the sum of the $2(B+1)$ smallest weights in the two rows $j=0,1$, counting distinct pairs separately.
The reduction to the two rows shows that any $H$ monomials with independent coefficient vectors have total weight at least $W_B(p,q)$.
Indeed, order the selected vectors by weight.
The first $r$ independent vectors are spanned by monomials with $j=0,1$ of weight at most the $r$-th selected weight, so there must be at least $r$ such monomials.

To bound $W_B(p,q)$, fix the number $r$ of entries chosen from the row $j=1$.
The least total weight comes from the first $H-r$ entries with $j=0$ and the first $r$ entries with $j=1$.
Since $H=2(B+1)$, its weight is
\[
 \begin{aligned}
 &\frac p2((H-r)(H-r-1)+r(r-1))+rq\\
 &\qquad=B(B+1)p+(B+1)q-\frac{q^2}{4p}
       +p\left(B+1-r-\frac q{2p}\right)^2.
 \end{aligned}
\]
Dropping the square and minimizing over $0\leq r\leq H$ gives, for every $q>p$,
\begin{equation}\label{e:determinant-weight-lower-bound}
 W_B(p,q)\geq B(B+1)p+(B+1)q-\frac{q^2}{4p}.
\end{equation}
It remains to sum the Taylor expansion while controlling the dependence on $B$.
We apply the Cauchy--Binet formula to~\eqref{e:log-taylor-expansion}, now with $0\leq b\leq B$, and use the weight bound for every nonzero coefficient determinant.
For a set $\mathcal S$ of pairs $(i,j)$ with $\#\mathcal S=H$, define the matrices
\[
 (U_{\mathcal S})_{k,(i,j)}=\frac{\alpha_k^i\beta_k^j}{i!j!},
 \qquad
 (V_{\mathcal S})_{(i,j),(a,b)}=(a+2b)^ib^j.
\]
The pairs in $\mathcal S$ index the columns of $U_{\mathcal S}$ and the rows of $V_{\mathcal S}$.
If $\det V_{\mathcal S}\ne0$, the weight bound gives
\[
 M^{-\sum_{\mathcal S}i}X^{-\sum_{\mathcal S}j}
 \leq e^{-W_B(p,q)}.
\]
With $K=2C(B+1)$, the bounds $|a+2b|\leq2B+1$, $|b|\leq B$ and Hadamard's inequality give
\[
 |\det U_{\mathcal S}\det V_{\mathcal S}|
 \leq H^H M^{-\sum_{\mathcal S}i}X^{-\sum_{\mathcal S}j}
 \prod_{(i,j)\in\mathcal S}\frac{K^{i+j}}{i!j!}.
\]
Apply Cauchy--Binet first to finite Taylor truncations.
Since the exponential series converges absolutely, we can pass to the limit in the preceding bound and obtain
\[
 \begin{aligned}
 |\det\mathcal E|
 &\leq\sum_{\#\mathcal S=H}|\det U_{\mathcal S}\det V_{\mathcal S}|\\
 &\leq e^{-W_B(p,q)}\frac{H^H}{H!}
       \left(\sum_{i,j\geq0}\frac{K^{i+j}}{i!j!}\right)^H\\
 &\leq\exp(-W_B(p,q)+O_C(B^2)).
 \end{aligned}
\]
For general $\sigma,\tau$, the product of the additional row factors is
\[
 \exp\left(-\sigma\sum_k\alpha_k-\tau\sum_k\beta_k\right),
\]
whose absolute value has logarithm $O_C(B^2)$.
Thus, whenever $\det\mathcal E\ne0$, we have
\begin{equation}\label{e:determinant-weight-bound}
 \log|\det\mathcal E|\leq-W_B(p,q)+O_C(B^2).
\end{equation}
Together with~\eqref{e:determinant-weight-lower-bound}, this proves the lemma.
\end{proof}

For $B=2$ at the equal scales $M=X$ of Section~\ref{s:determinant-example}, put $p=\log M$.
The restriction $j\leq1$ gives
\[
 W_2(p,p)=0+p+p+2p+2p+3p=9p.
\]
With no restriction on $j$, the six smallest weights are $0,p,p,2p,2p,2p$, with total $8p$.
This explains the improvement from $M^{-8}$ to $M^{-9}$ in that example.

In Section~\ref{s:unbalanced}, the error in $\beta_k$ is larger by a factor $T$.
Use the same parameters~\eqref{e:determinant-parameters} and matrix~\eqref{e:general-log-evaluation-matrix}, but replace~\eqref{e:determinant-local-bounds} by
\[
 |\alpha_k|\leq CM^{-1},
 \quad
 |\beta_k|\leq CT/X,
 \quad
 1\leq T<X/M,
 \quad 1\leq k\leq H.
\]
\begin{lemma}\label{L:determinant-larger-error}
If $\det\mathcal E\ne0$, then
\[
 \begin{split}
 \log|\det\mathcal E|
 \leq{}&
 -B(B+1)\log M-(B+1)\log X\\
 &+\frac{(\log X)^2}{4\log M}+B\log T+O(B^2),
 \end{split}
\]
where the implied constant only depends on $C$.
\end{lemma}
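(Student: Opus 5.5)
We rerun the proof of Lemma~\ref{L:quantitative-two-layer-determinant} with the weaker bound on $\beta_k$. The cleanest route is to rescale the second logarithmic coordinate so that the previous proof applies with $X$ replaced by $X'=X/T$. Since $T<X/M$, we have $X'>M$, so~\eqref{e:determinant-parameters} holds with $X'$ in place of $X$. The assumption $|\beta_k|\leq CT/X=C/X'$ is exactly~\eqref{e:determinant-local-bounds} at scale $X'$. The two edge cases need care. If $X'<2$, then $M<X'<2$ contradicts $M\geq2$, so this cannot occur. Otherwise Lemma~\ref{L:quantitative-two-layer-determinant} applies verbatim with $(M,X')$.

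Applying that lemma gives, with $p=\log M$ and $q'=\log X-\log T$,
\[
 \log|\det\mathcal E|\leq -B(B+1)p-(B+1)q'+\frac{q'^2}{4p}+O(B^2).
\]
It remains to compare this with the claimed right-hand side. The difference between the two is
\[
 (B+1)\log T-B\log T+\frac{q'^2-q^2}{4p}
 =\log T-\frac{(2q-\log T)\log T}{4p},
 \qquad q=\log X .
\]
We need this to be at most $O(B^2)$, that is,
\[
 \frac{(2q-\log T)\log T}{4p}\geq \log T-O(B^2).
\]
Since $\log T<q-p$, we have $2q-\log T>q+p>2p$. Hence the left side is at least $\tfrac12\log T$, which gives only half of what is needed.

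So I would instead redo the weight count in the proof of Lemma~\ref{L:quantitative-two-layer-determinant} directly, which is the sharper approach. Everything there goes through with $q$ replaced by $q'=q-\log T$: the reduction of $N_2^2$ on $\Gamma_B$, the Cauchy--Binet expansion, and the Hadamard bound with $O_C(B^2)$ loss. One then keeps the exact minimum over $r$ rather than the completed-square bound. For $r$ entries in row $j=1$, the weight is
\[
 B(B+1)p+rq'+\tfrac p2\bigl((H-r)(H-r-1)+r(r-1)\bigr)-B(B+1)p .
\]
Write this as the previous expression in $(p,q)$ minus $r\log T$. The key point is that only the $r$ monomials with $j=1$ lose the factor $T$. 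Thus
\[
 W\geq \min_{r}\bigl[\text{old weight}(r)\bigr]-r_{\max}\log T,
\]
and the relevant $r$ is at most $H=2(B+1)$, which again loses a factor $2$.

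The main obstacle is therefore pinning down which $r$ actually occurs. The minimizing $r$ is near $B+1-q/(2p)$, which is at most $B$ because $q>p$. I would restrict to $r\leq B$ and prove that for $r>B$ the old weight exceeds the minimum by at least $(r-B)\log T$. This works because the square term $p(B+1-r-q/(2p))^2$ grows linearly in $r$ past the vertex at rate $\geq p$. It also uses $\log T<q-p$, which keeps the comparison favourable: for $r>B$ we have $r-(B+1)+q/(2p)>q/(2p)$, so the square term exceeds its minimum by roughly $(r-B)q\geq(r-B)\log T$. This yields the term $+B\log T$, with the $O(B^2)$ error unchanged.
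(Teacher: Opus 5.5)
Your final argument is correct and is essentially the paper's. You apply the weight bound \eqref{e:determinant-weight-bound} at scale $X/T$, i.e.\ with $q'=q-\log T>p$; show that only configurations with at most $B$ entries in the row $j=1$ can be minimal; and then pay $\log T$ for each of those entries against the completed-square bound \eqref{e:determinant-weight-lower-bound} at $q$. The paper gets the ``at most $B$'' step from a one-line exchange argument: if $r\geq B+1$, the largest row-one weight, at least $Bp+q'$, exceeds the first omitted row-zero weight $(H-r)p\leq(B+1)p$. That is cleaner than your ``roughly $(r-B)q$'' justification, which is inaccurate at $r=B+1$: there the excess over $r=B$ is exactly $q-p$, not about $q$. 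Your inequality still holds, since $q-p>\log T$ and each later step from $r$ to $r+1$ costs at least $q+p$.
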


\begin{proof}
As in the preceding proof, write $p=\log M$ and $q=\log X$, and put
\[
 q'=q-\log T=\log(X/T).
\]
Since $M<X/T$, estimate~\eqref{e:determinant-weight-bound} applies with $X/T$ in place of $X$ and gives
\[
 \log|\det\mathcal E|\leq-W_B(p,q')+O_C(B^2).
\]
A minimizing set at $q'$ has at most $B$ entries from the row $j=1$.
Indeed, if it had $r\geq B+1$ such entries, the largest selected weight in that row would be at least $Bp+q'$, while the first omitted weight with $j=0$ would be $(H-r)p$.
Since
\[
 Bp+q'>(B+1)p\geq(H-r)p,
\]
exchanging these entries would decrease the total weight.
Returning from $q'$ to $q$ adds $\log T$ to each of at most $B$ entries, so
\[
 W_B(p,q)\leq W_B(p,q')+B\log T.
\]
Combining this comparison with~\eqref{e:determinant-weight-lower-bound} proves the lemma.
\end{proof}

\subsection{Auxiliary forms}\label{s:binary-forms}

We bound the coefficients of the auxiliary forms and reduce the remaining count to square values of binary forms.

Let $B\geq1$ be an integer and recall that $H=2(B+1)$.
Choose finitely many integer tuples $(x_1,x_2,y_1,y_2)$ with both $(x_1,x_2)$ and $(y_1,y_2)$ primitive.
Define $\mathcal A$ to be the matrix with one row for each chosen tuple, whose entries are the values of the $H$ monomials
\[
 x_i y_1^{B-b}y_2^b,\qquad i=1,2,\quad 0\leq b\leq B.
\]
Suppose that $S,T\geq1$ and that every chosen tuple satisfies
\[
 |x_i|\leq S,\qquad |y_i|\leq T,\qquad i=1,2.
\]
\begin{lemma}\label{L:quantitative-two-layer-height}
If $\operatorname{rank}\mathcal A<H$, then there is a nonzero bihomogeneous form $\Psi$ with integer coefficients and bidegree at most $(1,B)$ that vanishes at every chosen tuple and satisfies
\[
 \log\lVert\Psi\rVert
 \leq
 H\left(\log(2S)+B\log(2T)+\tfrac12\log H\right).
\]
\end{lemma}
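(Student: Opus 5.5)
This is a Siegel-lemma type statement, and the cleanest route avoids Siegel's lemma entirely and uses Cramer's rule with Hadamard's inequality. Let $r=\operatorname{rank}\mathcal A<H$. The columns of $\mathcal A$ are indexed by the $H$ monomials $x_iy_1^{B-b}y_2^b$, and a vector $\mathbf c\in\ZZ^H$ with $\mathcal A\mathbf c=\mathbf 0$ gives the form
\[
 \Psi(x_1,x_2;y_1,y_2)=\sum_{i,b}c_{i,b}\,x_iy_1^{B-b}y_2^b.
\]
This form vanishes at every chosen tuple, and it is nonzero when $\mathbf c\neq\mathbf 0$. Its bidegree is exactly $(1,B)$ formally, hence at most $(1,B)$.

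To construct $\mathbf c$, choose $r$ linearly independent rows of $\mathcal A$ and an $r\times r$ nonsingular minor $\mathcal A'$ inside them, using columns $\mathcal S$. Take any column $j_0\notin\mathcal S$; one exists since $r<H$. Consider the $r\times(r+1)$ submatrix with columns $\mathcal S\cup\{j_0\}$. Define $\mathbf c$ supported on $\mathcal S\cup\{j_0\}$ by signed $r\times r$ minors: $c_j=(-1)^{\mathrm{pos}(j)}\det(\text{submatrix with column }j\text{ deleted})$.

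Expanding along a repeated row shows that $\mathbf c$ annihilates the $r$ chosen rows. Since every other row of $\mathcal A$ is a linear combination of these rows, $\mathbf c$ annihilates all of $\mathcal A$. Moreover $c_{j_0}=\pm\det\mathcal A'\neq0$, so $\mathbf c\neq\mathbf 0$, and $\mathbf c$ has integer entries.

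For the height bound, each entry of $\mathcal A$ satisfies $|x_iy_1^{B-b}y_2^b|\leq ST^B$. By Hadamard's inequality each $r\times r$ minor is at most $(r^{1/2}ST^B)^r$. Since $r\leq H-1<H$ and $S,T\geq1$, this gives
\[
 \lVert\Psi\rVert\leq (H^{1/2}ST^B)^H.
\]
Taking logarithms yields $\log\lVert\Psi\rVert\leq H(\log S+B\log T+\tfrac12\log H)$, which is stronger than the claimed bound with $2S,2T$.

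There is no genuine obstacle. The only points to check are that the construction works for any number of chosen rows, since we pass to a basis of the row space, and that the degenerate case $r=0$ is handled. In that case $\mathcal A=0$, and $\Psi=x_1y_1^B$ works with norm $1$. Primitivity of the tuples is not needed here; it is presumably used later to guarantee that the relation is nontrivial on the points being counted.
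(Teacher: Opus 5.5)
Your proof is correct and is essentially the paper's argument: the paper also takes $r$ independent rows, an invertible $r\times r$ submatrix plus one adjoined column, and uses the signed $r\times r$ minors as the integral relation, extended by zero. It likewise bounds the coefficients by Hadamard's inequality as $r^{r/2}(ST^B)^r$ and handles $r=0$ by taking a single monomial.
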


\begin{proof}
Put $r=\operatorname{rank}\mathcal A$.
If $r=0$, any of the evaluated monomials can serve as $\Psi$.
For $r\geq1$, choose $r$ independent rows and $r$ columns forming an invertible $r\times r$ submatrix, and adjoin one further column.
The signed $r\times r$ minors of the resulting $r\times(r+1)$ matrix give a nonzero integral relation among its columns.
Since the chosen rows span the row space of $\mathcal A$, extending this relation by zero gives the coefficients of a form $\Psi$ vanishing at every chosen tuple.
The entries of $\mathcal A$ are bounded by $ST^B$, so Hadamard's inequality gives
\[
 \lVert\Psi\rVert\leq r^{r/2}(ST^B)^r.
\]
Since $r\leq H-1$ and $S,T\geq1$, this gives the claimed coefficient bound.
\end{proof}

For a primitive factor $F$ of $\Psi$ with integer coefficients, Gelfond's factor inequality~\cite[Lemma~1.6.11, p.~27]{bg} and Lemma~\ref{L:quantitative-two-layer-height} give
\begin{equation}\label{e:quantitative-factor-height}
 \begin{split}
 \log\lVert F\rVert
 &\leq\log\lVert\Psi\rVert+O(B)\\
 &\leq H\left(\log(2S)+B\log(2T)+\tfrac12\log H\right)+O(B).
 \end{split}
\end{equation}

We now give the elimination argument described in Section~\ref{s:main-counting-argument}.
Let $G_0(x_1,x_2;y_1,y_2)$ be an integral bihomogeneous form of degree $2$ in $(x_1,x_2)$ and degree $1$ in $(y_1,y_2)$, and fix $h\in\ZZ\setminus\{0\}$.
We want to eliminate $(x_1,x_2)$ from the equations
\[
 G_0(x_1,x_2;y_1,y_2)=h,\qquad
 \Psi(x_1,x_2;y_1,y_2)=0,
\]
where $\Psi$ is a nonzero integral bihomogeneous form linear in $(x_1,x_2)$.
Factoring out the greatest common divisor of its two coefficient forms gives
\[
 \begin{aligned}
 \Psi(x_1,x_2;y_1,y_2)&=D_0(y_1,y_2)F(x_1,x_2;y_1,y_2),\\
 F(x_1,x_2;y_1,y_2)&=x_1F_1(y_1,y_2)+x_2F_2(y_1,y_2),
 \end{aligned}
\]
with $D_0,F$ integral, $F$ primitive, and $F_1,F_2$ coprime.
The nonzero forms among $F_1,F_2$ are homogeneous of a common degree $b\geq0$.
Solutions with $D_0(y_1,y_2)=0$ will be counted separately.
For the remaining equation $F=0$, define the binary form
\[
 K_0(y_1,y_2)=h\,G_0(F_2(y_1,y_2),-F_1(y_1,y_2);y_1,y_2).
\]
The following lemma reduces the count to the pairs $(y_1,y_2)$ for which $K_0(y_1,y_2)$ is a nonzero square.

\begin{lemma}\label{L:linear-elimination}
For each primitive $(y_1,y_2)$, the system
\[
 G_0(x_1,x_2;y_1,y_2)=h,\qquad
 F(x_1,x_2;y_1,y_2)=0
\]
has at most two primitive solutions $(x_1,x_2)$.
If a solution exists, then $K_0(y_1,y_2)$ is a nonzero square, and $K_0$ is a nonzero homogeneous form of odd degree $2b+1$.
\end{lemma}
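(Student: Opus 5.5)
Fix a primitive pair $(y_1,y_2)$ and regard $F(x_1,x_2;y_1,y_2)=x_1F_1(y_1,y_2)+x_2F_2(y_1,y_2)$ as a linear form in $(x_1,x_2)$. The plan is to show that its two coefficients cannot both vanish, which makes the solution set of $F=0$ a line; then solve the line primitively and substitute into $G_0=h$.

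\textbf{Coefficients do not vanish simultaneously.} Since $F_1,F_2$ are coprime in $\ZZ[y_1,y_2]$, they have no common linear factor over $\bar{\QQ}$. Hence they have no common projective zero, and $(F_1(y_1,y_2),F_2(y_1,y_2))\ne(0,0)$ for nonzero $(y_1,y_2)$. If one of the two forms is the zero polynomial, coprimality forces the other to be a nonzero constant, so $b=0$ and the same conclusion holds.

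\textbf{Parametrizing the solutions.} The integer solutions of $F=0$ form the rank one lattice spanned by $(F_2,-F_1)/\gcd(F_1,F_2)$, evaluated at $(y_1,y_2)$. It contains exactly two primitive vectors, $\pm(x_1,x_2)$, which gives the bound of at most two primitive solutions. Every primitive solution satisfies $(F_2(y_1,y_2),-F_1(y_1,y_2))=g(x_1,x_2)$ for some nonzero integer $g$.

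\textbf{Square values.} Since $G_0$ is quadratic in $(x_1,x_2)$, substitution gives
\[
 K_0(y_1,y_2)=h\,G_0(g x_1,g x_2;y_1,y_2)=hg^2\,G_0(x_1,x_2;y_1,y_2).
\]
If $(x_1,x_2)$ is a solution, then $G_0(x_1,x_2;y_1,y_2)=h$, so $K_0(y_1,y_2)=(hg)^2$. This is a nonzero square because $h\ne0$ and $g\ne0$.

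\textbf{Degree and nonvanishing of $K_0$.} Each term of $K_0$ has degree $2b$ from the quadratic substitution in $(x_1,x_2)$ and degree $1$ from the linear dependence of $G_0$ on $(y_1,y_2)$. So $K_0$ is homogeneous of degree $2b+1$, and it is nonzero as a polynomial because it takes the nonzero value $(hg)^2$ at $(y_1,y_2)$. This nonvanishing is only guaranteed when a solution exists, which is exactly the situation the lemma addresses.

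The only delicate point is that $F_1,F_2$ have no common zero at the given pair, including the degenerate case where one of them is identically zero. This follows from their coprimality over $\ZZ$, which implies coprimality over $\QQ$ by Gauss's lemma.
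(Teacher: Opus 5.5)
Your proof is correct and follows the paper's argument. Coprimality of $F_1,F_2$ rules out a common zero at the primitive pair $(y_1,y_2)$. The primitive solutions of the linear equation are therefore $\pm(F_2(y_1,y_2),-F_1(y_1,y_2))/g$, and quadratic homogeneity of $G_0$ in $(x_1,x_2)$ gives $K_0(y_1,y_2)=(hg)^2\ne0$, with $K_0$ of degree $2b+1$.

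The only difference is that you pass through $\bar{\QQ}$. The paper observes more directly that a common rational projective zero would give a common linear factor over $\QQ$, and this suffices since $(y_1,y_2)$ is integral.
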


\begin{proof}
A common rational projective zero of $F_1$ and $F_2$ would give a common linear factor over $\QQ$.
Since $F_1$ and $F_2$ are coprime, they cannot both vanish at a primitive pair $(y_1,y_2)$.
The equation $F(x_1,x_2;y_1,y_2)=0$ and the primitivity of $(x_1,x_2)$ therefore give a nonzero integer $g$ such that
\[
 (F_2(y_1,y_2),-F_1(y_1,y_2))=g(x_1,x_2).
\]
Since $G_0$ is quadratic in $(x_1,x_2)$, we obtain
\[
 \begin{aligned}
 K_0(y_1,y_2)&=h\,G_0(gx_1,gx_2;y_1,y_2)\\
 &=hg^2G_0(x_1,x_2;y_1,y_2)=(hg)^2\ne0.
 \end{aligned}
\]
For fixed $(y_1,y_2)$, the vector $(F_2(y_1,y_2),-F_1(y_1,y_2))$ determines $(x_1,x_2)$ up to sign, giving at most two choices.
The definition of $K_0$ gives homogeneity of degree $2b+1$, and the displayed identity shows that $K_0$ is nonzero whenever a solution exists.
\end{proof}

To apply the square sieve, we also need to bound the values of $K_0$ on the counting region.
Suppose that $K_0$ is nonzero.
For $T\geq1$, put
\begin{equation}\label{e:binary-form-value-size}
 W(K_0)=2+\max_{\substack{(y_1,y_2)\in\ZZ^2\\|y_i|\leq T}}|K_0(y_1,y_2)|.
\end{equation}
Then
\begin{equation}\label{e:quantitative-value-height}
 \begin{split}
 \log W(K_0)
 \leq{}&2\log\lVert F\rVert+\log|h|+\log\lVert G_0\rVert\\
       &+(2b+1)\log T+O(\log(2b+2)),
 \end{split}
\end{equation}
with an absolute implied constant.
Indeed, each of $F_1(y_1,y_2)$ and $F_2(y_1,y_2)$ has absolute value at most $(b+1)\lVert F\rVert T^b$, and $G_0$ has at most six monomials.

\section{The balanced ranges}\label{s:balanced}\label{s:pell-estimates}

We now bound the dyadic counting function $N(D,E,U,V)$ from~\eqref{e:dyadic-count} by counting points on the auxiliary forms from Section~\ref{s:determinant-method}.
We first prove a bound in terms of $M$, then choose $M$ and the auxiliary degree to obtain a power saving in the balanced ranges.
For $u\sim U$, $v\sim V$, and $\theta\geq3$, recall that a factorization is balanced if
\[
 \min(u,v)>(UV)^{1/(100\theta^2)},
\]
and unbalanced otherwise.

Fix a dyadic box as in~\eqref{e:scales}, with size $Y$ from~\eqref{e:dyadic-size}, an integer $B_0\geq1$, and parameters satisfying
\begin{equation}\label{e:balanced-parameter-assumptions}
 \begin{gathered}
 UV\geq2,\qquad \theta\geq3,\qquad Y\leq (UV)^{2\theta},\\
 1\leq M\leq\min(Y,UV),\qquad 1\leq B_0\leq4\theta^2,\qquad H_0=2(B_0+1).
 \end{gathered}
\end{equation}
Partition the ranges of $s=d/e$ and $t=v/u$ into intervals of lengths $D/(EM)$ and $V/(UM)$, respectively.
For each pair of intervals, let $s_0$ be the right endpoint of the interval in $s$ and $t_0$ the left endpoint of the interval in $t$.
The maps
\begin{equation}\label{e:balanced-interval-lattices}
 (d,e)\mapsto\left(\frac{M}{D}(d-es_0),\frac eE\right),\quad
 (u,v)\mapsto\left(\frac{M}{V}(v-ut_0),\frac uU\right)
\end{equation}
send the solutions in the chosen intervals into fixed bounded rectangles.
The images of $\ZZ^2$ under these maps are the corresponding interval lattices.

For a pair of intervals, an auxiliary form is a nonzero integral bihomogeneous form $\Psi$ of bidegree at most $(1,B_0)$ such that
\[
 \Psi(d,e;u,v)=0
\]
at every solution in that pair.
For every pair containing at least $H_0$ solutions, we require an auxiliary form whose coefficients in reduced bases for the lattices~\eqref{e:balanced-interval-lattices} satisfy
\begin{equation}\label{e:balanced-auxiliary-height-assumption}
 \log\lVert\Psi\rVert\ll\theta^4\log(2Y),
\end{equation}
with an absolute implied constant.
The square sieve in Corollary~\ref{C:anisotropic-square} gives the following bound.

\begin{proposition}\label{P:balanced-box}
Assume~\eqref{e:balanced-parameter-assumptions} and that every such pair of intervals has an auxiliary form satisfying~\eqref{e:balanced-auxiliary-height-assumption}.
Suppose also that
\begin{equation}\label{e:balanced-prime-condition}
 \left(\frac{UV}{M}\right)^{1/4}
 \geq C\theta^5\log(2UV),
\end{equation}
for a sufficiently large absolute constant $C$.
Then
\[
 N(D,E,U,V)
 \ll\theta^3\log^3(2UV)
 \left(\frac{UV}{M}\right)^{1/4}
 ((UVM)^{1/2}+U+V).
\]
The implied constant is absolute.
\end{proposition}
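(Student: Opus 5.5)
We bound $N(D,E,U,V)$ by summing, over pairs of intervals, the number of solutions lying in each pair. Pairs with fewer than $H_0$ solutions contribute at most $H_0\ll\theta^2$ each. The number of pairs that actually contain a solution is controlled by Lemma~\ref{L:intrinsic-completion} applied to both lattices in~\eqref{e:balanced-interval-lattices}. We take $R\asymp UV$ for the $(u,v)$-lattice, with slope $v/u$. For the $(d,e)$-lattice we pass to the slope $t$ via $t=s^2+O(Y^{-1})$; the derivative of $s\mapsto s^2$ is bounded above and away from zero after normalization, so only $O(1)$ intervals in $s$ correspond to each interval in $t$. Hence the number of occupied pairs is $\ll(UV)^{1/2}\log(2UV)+(UVM)^{1/2}$, which is within the claimed bound. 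So it suffices to treat pairs containing at least $H_0$ solutions, which carry an auxiliary form $\Psi$ satisfying~\eqref{e:balanced-auxiliary-height-assumption}.

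Fix such a pair and write $\Psi=D_0(u,v)F(d,e;u,v)$ as in Section~\ref{s:binary-forms}, working in reduced-basis coordinates $(y_1,y_2)$ for the $(u,v)$-lattice and $(x_1,x_2)$ for the $(d,e)$-lattice.

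\textbf{Points with $D_0(u,v)=0$.} The zeros of $D_0$ give at most $B_0\ll\theta^2$ primitive directions $(u:v)$, and each direction yields $O(1)$ pairs $(u,v)$ in the box. For each such pair, \eqref{e:factor-equation} is the binary quadratic equation $e^2v-d^2u=h$ of nonsquare discriminant $4uv$. Lemma~\ref{L:binary-quadratic-representations} then gives $O(\log(2Y)/\log(2uv))=O(\theta)$ solutions, since $\log Y\leq2\theta\log(UV)$. Summed over the occupied pairs, this contributes $\ll\theta^3$ times the pair count.

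\textbf{Points with $F=0$.} Write $G_0(x_1,x_2;y_1,y_2)$ for $e^2v-d^2u$ expressed in the reduced coordinates. Lemma~\ref{L:linear-elimination} shows that each $(y_1,y_2)$ gives at most two $(x_1,x_2)$, and that $K_0(y_1,y_2)$ is then a nonzero square, where $K_0$ has odd degree $2b+1\leq2B_0+1$. By~\eqref{e:reduced-basis-bounds}, the reduced coordinates range over $|y_i|\ll1/\lambda_i$, where $\lambda_i=\lambda_i(\Lambda_I)$ for the normalized $(u,v)$-lattice. Its covolume is $\asymp M/(UV)$, so $\lambda_1\lambda_2\asymp M/(UV)$ and $\lambda_1\ll1$. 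We apply Corollary~\ref{C:anisotropic-square} with $S_1\asymp1/\lambda_1$ and $S_2\asymp1/\lambda_2$. If $1/\lambda_2\lesssim 1$, the count is trivially $\ll1/\lambda_1$ on a single line.

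\textbf{Bounding $\log W$.} Combining~\eqref{e:quantitative-value-height} with~\eqref{e:quantitative-factor-height} and~\eqref{e:balanced-auxiliary-height-assumption} gives
\[
\log W\ll\theta^4\log(2Y)+\theta^2\log(2UV)\ll\theta^5\log(2UV).
\]
The hypothesis~\eqref{e:balanced-prime-condition}, since $(S_1S_2)^{1/4}\asymp(UV/M)^{1/4}$, is exactly the condition $(S_1S_2)^{1/4}\geq C\log W$ needed to apply the corollary.

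The corollary yields a per-pair count
\[
\ll\log^2(2UV)\,S_1(S_1S_2)^{1/4}\asymp\log^2(2UV)\,\lambda_1^{-1}\left(\frac{UV}{M}\right)^{1/4}.
\]
Summing $\lambda_1^{-1}$ over the $t$-intervals, by Lemma~\ref{L:intrinsic-completion} with $R\asymp UV$ and $\det\Lambda=1$, gives
\[
\ll(UV)^{1/2}\log(2UV)+(UVM)^{1/2}.
\]
For each $t$-interval, the number of compatible $s$-intervals is $O(1)$, again because $t\approx s^2$. This gives the main term, with $\log^3$ after absorbing the extra logarithm. The remaining $\theta^3$ factor comes from the $D_0$ contribution and the $H_0$ terms.

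\textbf{Source of the $U+V$ terms.} These arise from occupied pairs where the lattice is degenerate, for instance $u$ or $v$ essentially fixed, and $(UV)^{1/2}$ is dominated by $U+V$. We will absorb them using the crude $O(1+H/\lVert\gamma\rVert)$ counting in the proof of Lemma~\ref{L:intrinsic-completion}. Note that $(UV)^{1/2}\leq U+V$.

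\textbf{Main obstacle.} The hard step is the bookkeeping that couples the $(d,e)$-intervals and $(u,v)$-intervals, together with verifying that the height of $K_0$ in the reduced coordinates stays within $\theta^5\log(2UV)$. The coordinate changes must have norms bounded polynomially in $UV$, which is where~\eqref{e:reduced-change-bounds} enters.
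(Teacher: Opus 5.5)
\textbf{Gap in the final summation over intervals.} Your outline follows the paper's proof step for step. You treat pairs with fewer than $H_0$ points, then points with $D_0=0$ via Lemma~\ref{L:binary-quadratic-representations}, then the remaining points via Lemma~\ref{L:linear-elimination} and Corollary~\ref{C:anisotropic-square}, and finally sum over intervals with Lemma~\ref{L:intrinsic-completion}. Your per-pair bound $\log^2(2UV)(UV/M)^{1/4}/\lambda_1(\Lambda_I)$ is correct. The problem is the last summation, which is exactly where the term $U+V$ must come from.

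You apply Lemma~\ref{L:intrinsic-completion} with $R\asymp UV$ and $\det\Lambda=1$. This is not a valid instance of the lemma unless $U\asymp V$.
\begin{itemize}
\item With $\Lambda=\ZZ^2$, the points $(v,u)$ have $|y|=u\asymp U$ rather than $R^{1/2}$, and the slope $v/u\asymp V/U$ lies outside any fixed range.
\item Rescaling to $\operatorname{diag}(\sqrt{U/V},\sqrt{V/U})\ZZ^2$ fixes this, but then $\lambda_1(\Lambda)=\min(U/V,V/U)^{1/2}$. This is far below $1$ when $U,V$ are unbalanced, whereas the lemma requires $\lambda_1(\Lambda)\geq1$.
\end{itemize}

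The bound you claim is in fact false for this lattice sum. The vector $(u,v)=(0,1)$ is sent to $(M/V,0)$ in every $(u,v)$-lattice of \eqref{e:balanced-interval-lattices}, so $1/\lambda_1(\Lambda_I)\geq V/M$ for every interval. If $V/U>M$, then for fixed $u\sim U$ the admissible $v$ in a $t$-interval form a range of length at least $V/M>U$. Hence a positive proportion of the $\asymp M$ intervals contain lattice points from the box. The sum is therefore $\gg V$, which is not $O((UV)^{1/2}\log(2UV)+(UVM)^{1/2})$ once $V/U$ is large compared with $M\log^2(2UV)$. So no argument using only these lattices can give your bound. The $U+V$ term is not confined to a few degenerate pairs, and saying you will absorb it via the crude $O(1+H/\lVert\gamma\rVert)$ count does not supply an argument.

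\textbf{The fix.} Choose a normalization for which the lemma applies: take $R=(U+V)^2$ and $\Lambda=\operatorname{diag}((U+V)/V,(U+V)/U)\ZZ^2$. Then:
\begin{itemize}
\item $\lambda_1(\Lambda)\geq1$ and $\det\Lambda=R/(UV)$;
\item the vector $((U+V)v/V,(U+V)u/U)$ has slope $(U/V)t\in[1/2,2]$;
\item with endpoint $(U/V)t_0$, the lattice $\Lambda_I$ from \eqref{e:intrinsic-interval-map} is exactly the $(u,v)$-lattice in \eqref{e:balanced-interval-lattices}.
\end{itemize}
Using $U+V\leq UV+1$, the lemma gives $\sum_I1/\lambda_1(\Lambda_I)\ll(UVM)^{1/2}+(U+V)\log(2UV)$. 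Multiplying by your per-pair bound gives the proposition.

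Your count of occupied pairs via the lemma has the same defect, but you do not need it. There are trivially $O(M)$ pairs, since each interval in one ratio meets $O(1)$ intervals in the other. Since $M\leq UV$, we have $M\leq(UVM)^{1/2}$. This also absorbs the $O(\theta^3M)$ contributions from $D_0=0$ and from pairs with fewer than $H_0$ points.
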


\begin{proof}
It suffices to prove the bound for each fixed $h\in\{\pm1,\pm2\}$.
By~\eqref{e:normalized-pell-relation}, we have
\[
 t=s^2+O\left(\frac{V}{UY}\right).
\]
Since $s\asymp D/E$ and $D^2/E^2\asymp V/U$, the map $s\mapsto s^2$ sends an interval of length $D/(EM)$ to one of length $\asymp V/(UM)$.
The error term is also $O(V/(UM))$ because $M\leq Y$, so each interval for either ratio is paired with only $O(1)$ intervals for the other.
There are $O(M)$ intervals in each range, hence $O(M)$ pairs that can contain solutions.
Those containing fewer than $H_0$ solutions contribute $O(H_0M)$ in total.

For each pair of intervals containing at least $H_0$ solutions, choose reduced bases for the two lattices $\Lambda_1,\Lambda_2$ in~\eqref{e:balanced-interval-lattices}.
Write $(x_1,x_2)$ and $(y_1,y_2)$ for the corresponding integer coordinates of $(d,e)$ and $(u,v)$, respectively.
Both are primitive, since $(d,e)$ and $(u,v)$ are primitive and the changes of integer coordinates are unimodular.
By~\eqref{e:reduced-basis-bounds}, these coordinates satisfy
\[
 |x_i|\ll\frac1{\lambda_i(\Lambda_1)},\qquad
 |y_i|\ll\frac1{\lambda_i(\Lambda_2)},\qquad i=1,2.
\]
In these coordinates the Pell equation and the auxiliary equation are
\[
 G_0(x_1,x_2;y_1,y_2)=h,
 \qquad \Psi(x_1,x_2;y_1,y_2)=0,
\]
where $G_0$ has bidegree $(2,1)$.
If $\Psi$ depends on $(x_1,x_2)$, apply Lemma~\ref{L:linear-elimination} and write
\[
 \Psi=D_0F,
\]
where $D_0$ is independent of $(x_1,x_2)$.
Otherwise, put $D_0=\Psi$ and $F=1$.
We first count the points with $D_0(y_1,y_2)=0$.
There are at most $B_0$ rational slopes $y_1/y_2$ in the reduced coordinates, each of which determines the original positive primitive $(u,v)$.
For fixed $(u,v,h)$, the remaining equation is
\[
 e^2v-d^2u=h.
\]
The form on the left is primitive with discriminant $4uv$, so Lemma~\ref{L:binary-quadratic-representations} applies.
Here $uv>1$ is squarefree, $uv\asymp UV$, the coefficients are $O(UV)$, and $d,e\ll (UV)^{2\theta}$ by~\eqref{e:balanced-parameter-assumptions}.
Since $\tau(h)=O(1)$, the lemma gives $O(\theta)$ solutions.
The points with $D_0(y_1,y_2)=0$ contribute $O(B_0\theta)$ per pair of intervals, so these points and those set aside at the start contribute $O(\theta^3M)$ points in total.

For the remaining points, Lemma~\ref{L:linear-elimination} gives a nonzero binary form $K_0$ of odd degree at most $2B_0+1$ such that
\[
 K_0(y_1,y_2)=\square\ne0.
\]
Each pair $(y_1,y_2)$ corresponds to at most two pairs $(x_1,x_2)$, so we can count these points with the square sieve once we bound the values of $K_0$.
The factor bound~\eqref{e:quantitative-factor-height} gives $\log\lVert F\rVert\ll\theta^4\log(2Y)$.
The maps~\eqref{e:balanced-interval-lattices} and their inverses have entries bounded by a fixed power of $Y$, so~\eqref{e:reduced-change-bounds} gives the same bound for the changes to reduced coordinates.
The coefficients of $G_0$ are therefore bounded by a fixed power of $Y$.
Using $Y\leq (UV)^{2\theta}$, we obtain
\[
 \log\left(\frac2{\lambda_1(\Lambda_2)}\right)+\log\lVert G_0\rVert+\log(2|h|)
 \ll \theta\log(2UV).
\]
To bound $W(K_0)$ from~\eqref{e:binary-form-value-size}, apply~\eqref{e:quantitative-value-height} with
\[
 T=\frac{C}{\lambda_1(\Lambda_2)},
\]
where $C$ is a sufficiently large absolute constant.
This gives
\[
 \log W(K_0)\ll \theta^5\log(2UV).
\]
If a pair of intervals contains only one primitive direction in $(y_1,y_2)$, it has at most two primitive $(y_1,y_2)$ and hence at most four solutions.
Otherwise two independent vectors in the bounded rectangle give $\lambda_2(\Lambda_2)\ll1$.
Choose intervals $I_1,I_2\subseteq[-T,T]$ containing the respective coordinates, with lengths
\[
 S_i=|I_i|\asymp\frac1{\lambda_i(\Lambda_2)},\qquad i=1,2,
 \qquad 1\leq S_2\leq S_1.
\]
The lattice determinant gives
\[
 S_1S_2\asymp\frac{1}{\lambda_1(\Lambda_2)\lambda_2(\Lambda_2)}
 \asymp\frac{UV}{M}.
\]
The bound for $W(K_0)$ and~\eqref{e:balanced-prime-condition} verify the hypotheses of Corollary~\ref{C:anisotropic-square}, which bounds the number of these solutions in one pair of intervals by
\begin{equation}\label{e:balanced-interval-count}
 O\left(\frac{\log^2(2UV)}{\lambda_1(\Lambda_2)}(UV/M)^{1/4}\right).
\end{equation}

To sum over the intervals in $t$, apply Lemma~\ref{L:intrinsic-completion} with
\[
 R=(U+V)^2,\qquad
 \Lambda=\operatorname{diag}\left(\frac{U+V}{V},\frac{U+V}{U}\right)\ZZ^2.
\]
This lattice satisfies $\lambda_1(\Lambda)\geq1$ and $\det\Lambda=R/(UV)$.
The vector
\[
 (x,y)=\left(\frac{U+V}{V}v,\frac{U+V}{U}u\right)
\]
has slope $x/y=(U/V)t\in[1/2,2]$.
Taking the interval endpoint in~\eqref{e:intrinsic-interval-map} to be $(U/V)t_0$ sends this vector to $(M(v-ut_0)/V,u/U)$, so $\Lambda_I$ is exactly the second lattice in~\eqref{e:balanced-interval-lattices}.
Since $U+V\leq UV+1$, the lemma gives
\[
 \sum_I\frac1{\lambda_1(\Lambda_I)}
 \ll (UVM)^{1/2}+(U+V)\log(2UV).
\]
Since each interval for $t$ corresponds to $O(1)$ intervals for $s$, this estimate sums~\eqref{e:balanced-interval-count} over all pairs of intervals.
The $O(1)$ points from each interval with only one primitive direction are also covered, since $M\leq UV$ and $1/\lambda_1(\Lambda_I)\gg1$.
The points with $D_0(y_1,y_2)=0$ and those in interval pairs containing fewer than $H_0$ solutions contribute only $O(\theta^3M)$, which is also covered because $M\leq UV$.
Summing over $h\in\{\pm1,\pm2\}$ completes the proof.
\end{proof}

We now choose the auxiliary degree and the interval parameter to obtain the power saving announced in Section~\ref{s:dyadic-bounds}.

\begin{proposition}\label{P:balanced-ranges}
Put $R=(UV)^{1/(100\theta^2)}$ and let $\eta>0$ be a sufficiently small absolute constant.
Assume that $UV$ is sufficiently large and
\[
 3\leq\theta\leq \eta\sqrt{\frac{\log(UV)}{\log\log(UV)}},
 \qquad U,V\geq R/4,
 \qquad Y\leq(4UV)^\theta.
\]
Then
\[
 N(D,E,U,V)
 \ll\theta^3\log^3(2UV)(UV)^{1-1/(200\theta^2)}.
\]
The implied constant is absolute.
\end{proposition}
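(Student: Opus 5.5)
We will deduce the statement from Proposition~\ref{P:balanced-box}. This requires choosing $B_0$ and $M$, and then verifying the vanishing of the integer determinants $\Delta_1$ through Lemma~\ref{L:quantitative-two-layer-determinant} together with the height bound of Lemma~\ref{L:quantitative-two-layer-height}.

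\textbf{Step 1: make $\Delta_1$ vanish.} Fix a pair of intervals and any $H_0=2(B_0+1)$ solutions in it. Take $A=1$, $B=B_0$, and use the logarithmic coordinates of Section~\ref{s:determinant-example}, with $|\alpha_k|\ll M^{-1}$ and $|\beta_k|\ll X^{-1}$, where $X=Y$. This needs $M<Y$; if $M\geq Y$ we simply take $M$ slightly below $Y$.

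By~\eqref{e:normalized-evaluation-determinant}, \eqref{e:integer-evaluation-determinant} and Lemma~\ref{L:quantitative-two-layer-determinant}, whenever $\Delta_1\ne0$ we get
\[
 \log|\Delta_1|\leq(B_0+1)\bigl(\log(DE)+B_0\log(UV)\bigr)-B_0(B_0+1)\log M-(B_0+1)\log Y+\frac{(\log Y)^2}{4\log M}+O(B_0^2).
\]
Since $\log(DE)=\log Y-\tfrac12\log(UV)$, the $\log Y$ terms cancel up to the quadratic correction. This leaves
\[
 (B_0+1)\Bigl(B_0-\tfrac12\Bigr)\log(UV)-B_0(B_0+1)\log M+\frac{(\log Y)^2}{4\log M}+O(B_0^2).
\]
Write $M=(UV)^{1-\kappa}$ and $L=\log(UV)$, and use $\log Y\leq 2\theta L$. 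The expression is then at most
\[
 L\Bigl(B_0(B_0+1)\kappa-\tfrac12(B_0+1)+\frac{\theta^2}{1-\kappa}\Bigr)+O(B_0^2).
\]
Choose $B_0\asymp 4\theta^2$ (within the allowed range $B_0\leq4\theta^2$) and $\kappa\asymp 1/(100\theta^2)$, for instance $\kappa=1/(50\theta^2)$, checking the constants. Then $\tfrac12(B_0+1)\approx2\theta^2$ dominates both $\theta^2/(1-\kappa)$ and $B_0^2\kappa\approx16\theta^4\kappa$. Hence the bracket is $\leq-c\theta^2$, so the expression is at most $-c\theta^2L+O(\theta^4)$. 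This is negative once $L\gg\theta^2$, which follows from $\theta\leq\eta\sqrt{L/\log L}$. Therefore $\Delta_1=0$.

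Balancing these constants is the main obstacle. The gain from the improved weight $W_B$ must beat the loss $(\log Y)^2/(4\log M)$ while keeping $B_0\leq4\theta^2$, and the final exponent depends on the specific choice. If the constants fail, we will reduce $\kappa$ relative to the target $1/(200\theta^2)$ and recheck the arithmetic.

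\textbf{Step 2: the auxiliary form and its height.} Once $\Delta_1=0$, Lemma~\ref{L:quantitative-two-layer-height}, applied in reduced coordinates, gives an auxiliary form. By~\eqref{e:reduced-change-bounds}, the coordinates are bounded by $S,T\leq Y^{O(1)}$, so $\log\lVert\Psi\rVert\ll H_0B_0\log(2Y)\ll\theta^4\log(2Y)$. This is~\eqref{e:balanced-auxiliary-height-assumption}. Condition~\eqref{e:balanced-prime-condition} reads $(UV)^{\kappa/4}\geq C\theta^5\log(2UV)$. It holds because $\kappa L/4\gg L/\theta^2\gg\eta^{-2}\log L$, which exceeds $\log(C\theta^5 L)$ for $\eta$ small.

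\textbf{Step 3: apply Proposition~\ref{P:balanced-box}.} The proposition gives
\[
 N\ll\theta^3\log^3(2UV)\,(UV)^{\kappa/4}\bigl((UV)^{1-\kappa/2}+U+V\bigr).
\]
The first term is $(UV)^{1-\kappa/4}$. For the remaining terms, the hypothesis $U,V\geq R/4$ gives $U+V\ll UV/R=(UV)^{1-1/(100\theta^2)}$, so their contribution is at most $(UV)^{1-1/(100\theta^2)+\kappa/4}$. With $\kappa$ of order $1/(50\theta^2)$, both exponents are at most $1-1/(200\theta^2)$, which proves the proposition.
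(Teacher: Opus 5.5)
\paragraph{Verdict.} There is a gap: your fallback for the case $M\geq Y$ does not work, and that case genuinely occurs. When $Y>M$, your argument is essentially the paper's and is fine.

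\paragraph{The case $Y>M$.} You use the same interval parameter $M=(UV)^{1-1/(50\theta^2)}$ (the paper's $\lceil UV/R^2\rceil$), Lemma~\ref{L:quantitative-two-layer-determinant} with $X=Y$, and Proposition~\ref{P:balanced-box}. The one difference is that you use the crude bound $\log Y\leq2\theta\log(UV)$, which forces $B_0\approx4\theta^2$. The paper instead uses $\log Y\leq\tfrac{7\theta}{6}\log(UV)$ and takes $B=\lceil\theta^2\rceil$. With $\kappa=1/(50\theta^2)$ your constants work: the bracket is roughly $0.32\theta^2-2\theta^2+\theta^2<0$. Both the determinant vanishing and the final exponent $1-1/(200\theta^2)$ go through.

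\paragraph{The gap: the range $Y\leq M$.} This is the range $DE\leq(UV)^{1/2-1/(50\theta^2)}$. It is not vacuous. For example, $w=u(u+1)$ gives boxes with $D=E=1$, $v=u+1$ and $Y\asymp\sqrt{UV}$, and these are balanced.

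Replacing $M$ by some $M'$ just below $Y$ breaks Step~1. Your bracket then contains $\kappa'=1-\log M'/\log(UV)$, which can be near $1/2$. The loss $\kappa'B_0(B_0+1)\log(UV)\asymp\theta^4\log(UV)$ then swamps the gain $\tfrac12(B_0+1)\log(UV)\asymp\theta^2\log(UV)$. For instance, at $DE=(UV)^{1/4}$ you have $q\approx p'\approx\tfrac34$. The bracket becomes
\[
\tfrac14B_0(B_0+1)-\tfrac12(B_0+1)+\tfrac{3}{16}>0,
\]
so no vanishing of $\Delta_1$ follows.

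\paragraph{How to fix it.} The missing ingredient is the elementary estimate the paper uses for small $DE$. Fix $h$ and coprime $d\sim D$, $e\sim E$. The equation $e^2v-d^2u=h$ puts $u$ in a single residue class modulo $e^2$ and determines $v$. Hence
\[
N\ll\sum_{d,e}\Bigl(\frac{U}{e^2}+1\Bigr)\ll\frac{DU}{E}+DE\asymp(UV)^{1/2}+DE,
\]
using $D^2U\asymp Y=DE\sqrt{UV}$. This disposes of every box with $DE\leq UV/R$, in particular all boxes with $Y\leq M$. Your determinant argument then covers the remaining boxes.
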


\begin{proof}
For $DE\leq UV/R$, the elementary estimate~\cite[Section~2.6, p.~60]{reuss} gives
\[
 N(D,E,U,V)\ll (UV)^{1/2}+DE,
\]
which suffices.
We may therefore assume $DE>UV/R$ and set
\begin{equation}\label{e:balanced-localization}
 M=\left\lceil\frac{UV}{R^2}\right\rceil,
 \qquad B=\lceil\theta^2\rceil.
\end{equation}
We will apply Proposition~\ref{P:balanced-box} with $B_0=B$ and $H_0=2(B+1)$.
The restriction on $\theta$ gives
\[
 \theta^4\log(2\theta)\ll \eta^2\theta^2\log(UV).
\]
The hypothesis $Y\leq(4UV)^\theta$ also gives, uniformly in the stated range,
\[
 Y<(UV)^{2\theta},\qquad
 \frac{\log Y}{\log(UV)}
 \leq\theta+O\left(\frac{\theta}{\log(UV)}\right)
 \leq\frac{7\theta}{6}.
\]

To apply the determinant estimate, write
\[
 p=\frac{\log M}{\log(UV)},
 \qquad q=\frac{\log Y}{\log(UV)}.
\]
The preceding bounds and $DE>UV/R$ give $q>p$.
For $H_0$ points in one pair of intervals, use the logarithmic coordinates in~\eqref{e:log-local-coordinates}, which satisfy $|\alpha_k|\ll M^{-1}$ and $|\beta_k|\ll Y^{-1}$ by~\eqref{e:normalized-pell-relation}.
Lemma~\ref{L:quantitative-two-layer-determinant}, with $X=Y$ and $\sigma=\tau=0$, bounds the normalized determinant.
Restoring the factors in~\eqref{e:normalized-evaluation-determinant} and~\eqref{e:integer-evaluation-determinant} with $A=1$ adds $(B+1)(\log(DE)+B\log(UV))+O(B^2)$ to its logarithm, as in~\eqref{e:determinant-vanishing-criterion}.
Since $\log(DE)=(q-\tfrac12)\log(UV)$, every nonzero integer evaluation minor $\Delta_1$ satisfies
\begin{equation}\label{e:balanced-integer-minor}
 \log|\Delta_1|
 \leq
 -\Phi\log(UV)
 +O(B^2\log(2B)),
\end{equation}
where
\begin{equation}\label{e:determinant-gain}
 \Phi
 =\frac{B+1}{2}-\frac{q^2}{4p}
 -(1-p)B(B+1).
\end{equation}
Since $q\leq7\theta/6$, $p\geq9/10$, and $1-p\leq1/(50\theta^2)$, we have
\[
 \Phi\geq\frac{B+1}{2}-\frac{q^2}{4p}
 -\frac{B(B+1)}{50\theta^2}\geq\frac{\theta^2}{12},
\]
using $\theta^2\leq B<\theta^2+1$.
Taking $\eta$ sufficiently small therefore makes the right hand side of~\eqref{e:balanced-integer-minor} negative.
Thus all full evaluation minors vanish, and each pair of intervals containing at least $H_0$ points admits a nonzero auxiliary form of bidegree at most $(1,B)$.

To bound its coefficients, we apply Lemma~\ref{L:quantitative-two-layer-height} in reduced coordinates.
The inverse maps in~\eqref{e:balanced-interval-lattices} have norms $O(D+E)$ and $O(U+V)$, since $s_0\asymp D/E$, $t_0\asymp V/U$, and $M\geq1$.
Thus~\eqref{e:reduced-change-bounds} bounds the reduced coordinates by $O(D+E)$ and $O(U+V)$, respectively, and we have
\[
 \log(2(D+E))\ll \theta\log(UV),
 \qquad
 \log(2(U+V))\ll\log(UV).
\]
The unimodular changes of coordinates preserve the rank deficiency, so the lemma and~\eqref{e:quantitative-factor-height} give
\[
 \log\lVert\Psi\rVert\ll \theta^4\log(UV).
\]
The same bound holds for every primitive factor with integer coefficients after the changes of basis.
Since $DE>UV/R$ implies $Y>(UV)^{3/2-1/(100\theta^2)}$, this bound is $O(\theta^4\log(2Y))$, as required by Proposition~\ref{P:balanced-box}.
To check the prime condition~\eqref{e:balanced-prime-condition}, the choice of $M$ in~\eqref{e:balanced-localization} gives
\[
 \left(\frac{UV}{M}\right)^{1/4}\asymp R^{1/2}
 =(UV)^{1/(200\theta^2)}
 \geq(\log(UV))^{1/(200\eta^2)}.
\]
Since $\theta^5\log(2UV)\ll(\log(UV))^{7/2}$, decreasing $\eta$ if necessary verifies~\eqref{e:balanced-prime-condition}.
Since $U,V\geq R/4$, we have $U+V\leq8UV/R$, so Proposition~\ref{P:balanced-box} now gives
\[
 \begin{aligned}
 N(D,E,U,V)
 &\ll\theta^3\log^3(2UV)R^{1/2}
 ((UVM)^{1/2}+U+V)\\
 &\ll\theta^3\log^3(2UV)\frac{UV}{R^{1/2}}.
 \end{aligned}
\]
This concludes the proof.
\end{proof}

\section{Factorization in quadratic fields}\label{s:quadratic-preliminaries}

In the unbalanced case, fixing the smaller squarefree factor leads to the equation
\(
 d^2w=n^2+a
\)
from~\eqref{e:unbalanced-equation}.
We construct primitive $(x_1,x_2)$ and $(y_1,y_2)$ for these solutions so that the approximate determinant method and square sieve can also be applied in Section~\ref{s:unbalanced}.
Lemma~\ref{L:embedding-coordinates} supplies the parameters for the determinant method; Lemmas~\ref{L:embedding-lattices} and~\ref{L:embedding-intervals} give the coordinate bounds and interval estimates for the lattice count.
For each fixed pair $(y_1,y_2)$, Lemma~\ref{L:quadratic-unit-count} bounds the number of pairs $(x_1,x_2)$ satisfying the resulting binary quadratic equation.

\subsection{Integral coordinates}\label{s:quadratic-coordinates}

Let $a\in\ZZ\setminus\{0,-1\}$ be squarefree and put
\[
 K=\QQ(\sqrt{-a}).
\]
The equation $d^2w=n^2+a$ can be written in $K$ as
\[
 \Nm_{K/\QQ}(n+\sqrt{-a})
 =(n+\sqrt{-a})(n-\sqrt{-a})
 =d^2w.
\]
This factorization gives an equation of bidegree $(2,1)$ in the primitive $(x_1,x_2)$ and $(y_1,y_2)$.
On each irreducible auxiliary factor linear in the first pair, Lemma~\ref{L:linear-elimination} reduces the count to nonzero square values of an odd-degree binary form in the second.

We work in the ring of integers $\mathcal O_K$, using the two real embeddings if $a<0$ and the real and imaginary parts of one complex embedding if $a>0$.
For a solution of~\eqref{e:unbalanced-equation} with $d,w,n\in\NN$ and $w$ squarefree, we have $\gcd(n,d)=1$, since $\gcd(n,d)^2\mid a$ and $a$ is squarefree.
Put
\[
 d_2=\gcd(d,2),\qquad d'=\frac{d}{d_2},\qquad
 \zeta=\frac{n+\sqrt{-a}}{d_2}.
\]
The element $\zeta$ is integral: when $d$ is even, we have $n$ odd and $-a\equiv1\;(\mathrm{mod}\,4)$.

Choose $\omega$ so that $\mathcal O_K=\ZZ[\omega]$ and
\[
 \omega^2=b\omega+c,\qquad b\in\{0,1\},\quad c\in\ZZ.
\]
Write
\[
 \zeta=z_1+z_2\omega,\qquad z_1,z_2\in\ZZ.
\]
The coefficient of $\sqrt{-a}$ in $\omega$ is $\pm1$ or $\pm1/2$, so $0<|z_2|\leq2$.
The coefficient of $\omega$ in $(x_1+x_2\omega)^2(y_1+y_2\omega)$ has bidegree $(2,1)$ and equals
\[
 P_1(x_1,x_2)y_1+Q_1(x_1,x_2)y_2,
\]
where
\begin{equation}\label{e:square-map-coefficient-forms}
 P_1=2x_1x_2+bx_2^2,\qquad
 Q_1=x_1^2+2bx_1x_2+(c+b^2)x_2^2.
\end{equation}

The next lemma constructs algebraic integers $\mu,\nu$ from the ideal factorization of $(\zeta)$.
Let $r,\ell>0$ be the greatest common divisors of their respective coordinate pairs, and write
\[
 \alpha=x_1+x_2\omega,\qquad \beta=y_1+y_2\omega,
 \qquad \mu=r\alpha,\qquad \nu=\ell\beta
\]
where $(x_1,x_2)$ and $(y_1,y_2)$ are both primitive.
\begin{lemma}\label{L:square-map-coordinates}
There are positive integers $k,r,\ell$ and primitive $(x_1,x_2)$ and $(y_1,y_2)$ such that
\begin{equation}\label{e:square-map-elements}
 \mu^2\nu=k^2\zeta,
 \quad |\Nm\mu|=d'k,
 \quad \Nm\nu=wk^2,
 \quad k\ll|a|^{1/2}.
\end{equation}
Moreover,
\begin{equation}\label{e:square-map-integral-equation}
 P_1(x_1,x_2)y_1+Q_1(x_1,x_2)y_2=\xi, \quad \text{where}~ \xi=\frac{k^2z_2}{r^2\ell}\in\ZZ\setminus\{0\}.
\end{equation}
\end{lemma}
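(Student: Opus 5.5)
The plan is to produce $\mu$ and $\nu$ from an ideal factorization $(\zeta)=\mathfrak a^2\mathfrak b$ with $\Nm\mathfrak a=d'$ and $\Nm\mathfrak b=w$. Since the ideal $\mathfrak a$ need not be principal, we replace it by a small multiple $(\mu)=\mathfrak a\mathfrak k$, chosen by Minkowski's theorem, and absorb the correction into $\nu$.

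\emph{Step 1: the ideal $\mathfrak a$.} We have $\Nm\zeta=(n^2+a)/d_2^2=d'^2w$. Let $p^e\parallel d'$ with $e\geq1$. If $p$ is odd, then $p\nmid a$, since otherwise $p\mid n$, contradicting $\gcd(n,d)=1$. If $p=2$, then $4\mid d$, so $a$ is odd with $-a\equiv1\pmod 4$. In both cases $p$ is unramified in $K$. Any common prime divisor of $\zeta$ and $\bar\zeta$ divides $\zeta-\bar\zeta=2\sqrt{-a}/d_2$, whose norm divides $4|a|$ (and equals $|a|$ when $d_2=2$). So no prime above $p$ divides both $\zeta$ and $\bar\zeta$. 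This rules out $p$ inert, because $(p)$ would divide both. Hence $p$ splits, and the whole $p$-part of the norm sits in a single prime $\mathfrak p$ of degree one, with $\mathfrak p^{2e+v_p(w)}\parallel(\zeta)$. Put $\mathfrak a=\prod_{p^e\parallel d'}\mathfrak p^e$. Then $\mathfrak a^2\mid(\zeta)$ and $\Nm\mathfrak a=d'$, so $\mathfrak b=(\zeta)\mathfrak a^{-2}$ is integral with $\Nm\mathfrak b=w$.

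\emph{Step 2: the elements $\mu,\nu$ and the integer $k$.} By Minkowski's theorem applied to the lattice $\mathfrak a\subseteq\mathcal O_K$, there is a nonzero $\mu\in\mathfrak a$ with $|\Nm\mu|\ll|\disc K|^{1/2}\Nm\mathfrak a\ll|a|^{1/2}d'$. Write $(\mu)=\mathfrak a\mathfrak k$ with $\mathfrak k$ integral, and put $k=\Nm\mathfrak k\ll|a|^{1/2}$, so $|\Nm\mu|=d'k$. Define $\nu=k^2\zeta/\mu^2$. Using $(k)=\mathfrak k\bar{\mathfrak k}$, we get
\[
 (\nu)=\mathfrak k^2\bar{\mathfrak k}^2\mathfrak a^2\mathfrak b\,\mathfrak a^{-2}\mathfrak k^{-2}=\bar{\mathfrak k}^2\mathfrak b .
\]
Hence $\nu\in\mathcal O_K$ and $|\Nm\nu|=k^2w$. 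Since $\Nm\zeta>0$, we have $\Nm\nu=k^4\Nm\zeta/(\Nm\mu)^2>0$, so $\Nm\nu=wk^2$. This gives~\eqref{e:square-map-elements} with $\mu^2\nu=k^2\zeta$.

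\emph{Step 3: the integral equation.} Write $\mu=r\alpha$ and $\nu=\ell\beta$ with $r,\ell$ the contents, so that $(x_1,x_2)$ and $(y_1,y_2)$ are primitive. Then $r^2\ell\,\alpha^2\beta=k^2\zeta=k^2z_1+k^2z_2\omega$. By~\eqref{e:square-map-coefficient-forms}, the $\omega$-coefficient of $\alpha^2\beta$ is $P_1(x_1,x_2)y_1+Q_1(x_1,x_2)y_2$. This is an integer, and it equals $k^2z_2/(r^2\ell)$. It is nonzero because $z_2\neq0$, which proves~\eqref{e:square-map-integral-equation}.

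The main obstacle is Step 1: verifying that the $d'$-part of $(\zeta)$ is exactly a square of an ideal of norm $d'$. This needs the coprimality of $\zeta$ and $\bar\zeta$ at primes dividing $d'$, including the prime $2$ when $4\mid d$, which is exactly why we divide by $d_2$. The primes dividing $w$ or $a$ need no control, since they are absorbed into $\mathfrak b$.
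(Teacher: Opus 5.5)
Your proof is correct and follows essentially the paper's route. You build $\mathfrak d$ (your $\mathfrak a$) prime by prime from the coprimality of $(\zeta)$ and $(\bar\zeta)$ above $d'$; the paper argues via $\zeta+\bar\zeta=2n/d_2$ rather than the difference. You then take a small $\mu\in\mathfrak d$ by geometry of numbers, set $k=|\Nm\mu|/d'$ and $\nu=k^2\zeta/\mu^2$, and compare $\omega$-coefficients after removing contents.

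One small point: the paper chooses $\mu$ with both embeddings $\ll (d')^{1/2}|\disc K|^{1/4}$, not just with a norm bound. That balanced embedding bound is reused later in Lemmas~\ref{L:square-map-uniformity} and~\ref{L:embedding-lattices}. Applying Minkowski's convex body theorem to a symmetric box gives it at no extra cost.
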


\begin{proof}
The norm of $\zeta$ is
\[
 \Nm_{K/\QQ}\zeta
 =\frac{n^2+a}{d_2^2}
 =(d')^2w.
\]
The ideals $(\zeta)$ and $(\bar\zeta)$ are coprime at every prime above $d'$, since $\zeta+\bar\zeta=2n/d_2$ is coprime to $d'$.
For each prime power $p^e\Vert d'$, coprimality and the norm identity give a unique prime ideal $\mathfrak p$ of norm $p$ with $\mathfrak p^{2e}\mid(\zeta)$.
Taking the product of these $\mathfrak p^e$ gives a unique integral ideal $\mathfrak d$ with
\[
 \Nm\mathfrak d=d',
 \qquad \mathfrak d^{\,2}\mid(\zeta).
\]
The lattice of $\mathfrak d$ has determinant $\asymp d'|\disc K|^{1/2}$, so~\eqref{e:reduced-basis-bounds} gives a nonzero element $\mu\in\mathfrak d$ with
\begin{equation}\label{e:ideal-generator-embedding-bound}
 |\sigma(\mu)|\ll (d')^{1/2}|\disc K|^{1/4}
\end{equation}
at every embedding $\sigma$.
Fix one such element $\mu$ for each ideal $\mathfrak d$, and put
\[
 k=\frac{|\Nm\mu|}{d'},
 \qquad \nu=\frac{k^2\zeta}{\mu^2}.
\]
Here $k=[\mathfrak d:(\mu)]$ is a positive integer.
The inclusions $k\mathfrak d\subseteq(\mu)$ and $(\zeta)\subseteq\mathfrak d^{\,2}$ show that $\nu$ is integral.
The definition of $\nu$ and the norm identities give the first three relations in~\eqref{e:square-map-elements}, while the embedding bound gives $k\ll |a|^{1/2}$.

Express $\mu$ and $\nu$ in the integral basis $(1,\omega)$ and divide each coordinate pair by the greatest common divisor of its entries to write
\[
 \mu=r(x_1+x_2\omega),\qquad
 \nu=\ell(y_1+y_2\omega),
\]
with $\gcd(x_1,x_2)=\gcd(y_1,y_2)=1$.
Comparing coefficients of $\omega$ in~\eqref{e:square-map-elements} gives
\[
 r^2\ell\bigl(P_1(x_1,x_2)y_1+Q_1(x_1,x_2)y_2\bigr)=k^2z_2.
\]
Thus $r^2\ell\mid k^2z_2$, and $\xi$ is a nonzero integer.
\end{proof}

We next bound the embeddings of $\alpha$ and $\beta$ and check that their coordinates determine the original solution.
Fix the integral basis for each $a$ before applying Lemma~\ref{L:square-map-coordinates}.
Put
\[
 \Delta=|\disc K|,\qquad |a|\leq\Delta\leq4|a|.
\]
The absolute norms of $\alpha$ and $\beta$ are
\begin{equation}\label{e:invariant-scalar-data}
 E_1=|\Nm\alpha|=\frac{d'k}{r^2},\qquad
 E_2=|\Nm\beta|=w\left(\frac{k}{\ell}\right)^2.
\end{equation}
\begin{lemma}\label{L:square-map-uniformity}
Fix $a,k,r,\ell$ as in Lemma~\ref{L:square-map-coordinates}.
Each choice of the primitive $(x_1,x_2)$ and $(y_1,y_2)$ corresponds to at most one solution $(d,w,n)$ of~\eqref{e:unbalanced-equation}.
If $n\geq2|a|^{1/2}$, there is an absolute constant $C$ such that, at every embedding $\sigma:K\hookrightarrow\mathbb C$,
\[
\begin{aligned}
 |a|^{-C}E_1^{1/2}
 &\leq|\sigma(\alpha)|
 \leq|a|^{C}E_1^{1/2},\\
 |a|^{-C}E_2^{1/2}
 &\leq|\sigma(\beta)|
 \leq|a|^{C}E_2^{1/2}.
\end{aligned}
\]
\end{lemma}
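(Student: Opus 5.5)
For uniqueness we reconstruct the solution from the data. With $a,k,r,\ell$ fixed, the primitive pairs determine $\alpha=x_1+x_2\omega$ and $\beta=y_1+y_2\omega$, hence $\mu=r\alpha$ and $\nu=\ell\beta$, and therefore $\zeta=\mu^2\nu/k^2$ by~\eqref{e:square-map-elements}. Writing $\zeta=z_1+z_2\omega$, the element $d_2\zeta=n+\sqrt{-a}$ has $\sqrt{-a}$-coefficient $1$. This pins down $d_2\in\{1,2\}$: exactly one choice makes the coefficient equal to $1$. We then recover $n$ as the rational part of $d_2\zeta$. From $|\Nm\mu|=d'k$ we get $d'$, hence $d=d_2d'$. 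Finally $w=(n^2+a)/d^2$. So each pair of primitive coordinates gives at most one triple $(d,w,n)$.

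For the embedding bounds it suffices to bound $|\sigma(\alpha)|$ and $|\sigma(\beta)|$ from above. If $K$ is imaginary, $|\sigma(\alpha)|=E_1^{1/2}$ exactly, so this case is trivial; the substance is $a<0$, where $K$ is real. There $|\sigma(\alpha)\bar\sigma(\alpha)|=E_1$, so an upper bound $|a|^CE_1^{1/2}$ at both embeddings gives the matching lower bound at both. The same applies to $\beta$.

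\emph{Bound for $\alpha$.} By~\eqref{e:ideal-generator-embedding-bound}, $|\sigma(\mu)|\ll (d')^{1/2}\Delta^{1/4}$. Dividing by $r$ gives
\[
|\sigma(\alpha)|\ll (d')^{1/2}\Delta^{1/4}/r.
\]
Since $E_1^{1/2}=(d'k)^{1/2}/r$ and $k\geq1$, this is at most $|a|^{C}E_1^{1/2}$.

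\emph{Bound for $\beta$.} We use $\nu=k^2\zeta/\mu^2$. The hypothesis $n\geq2|a|^{1/2}$ is used here. It gives
\[
|\sigma(n+\sqrt{-a})|\asymp n \quad\text{at both embeddings},
\]
since $|\sqrt{-a}|\leq n/2$. Hence $|\sigma(\zeta)|\asymp n/d_2\asymp (\Nm\zeta)^{1/2}=d'w^{1/2}$. For the denominator, the upper bound on $|\sigma(\mu)|$ combined with $|\Nm\mu|=d'k$ gives a lower bound at each embedding:
\[
|\sigma(\mu)|\gg (d'k)/\bigl((d')^{1/2}\Delta^{1/4}\bigr)=(d')^{1/2}k\Delta^{-1/4}.
\]
Substituting,
\[
|\sigma(\nu)|\ll k^2\, d'w^{1/2}\Delta^{1/2}/(d'k^2)=w^{1/2}\Delta^{1/2}.
\]
Dividing by $\ell$ and comparing with $E_2^{1/2}=w^{1/2}k/\ell$ gives $|\sigma(\beta)|\leq |a|^{C}E_2^{1/2}$, using $k\geq1$ and $\Delta\leq4|a|$. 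The lower bound then follows from the norm identity as noted above.

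The main obstacle is this step for $\beta$: a lower bound on each embedding of $\mu$, together with the near-equality of the two conjugates of $\zeta$. The latter is exactly what $n\geq2|a|^{1/2}$ supplies, and the powers of $|a|$ arising from $\Delta$ and from $k\ll|a|^{1/2}$ are absorbed into $C$.
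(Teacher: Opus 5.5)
Your proof is correct and follows the paper's route. Uniqueness comes from reconstructing $\zeta=r^2\ell\alpha^2\beta/k^2$ and reading off $d_2$, $n$, $d'$ and $w$; in the real case the bounds for $\alpha$ come from the generator bound for $\mu$ with $|\Nm\mu|=d'k$, and those for $\beta$ from $\nu=k^2\zeta/\mu^2$ with the comparability of the conjugates of $\zeta$ when $n\geq2|a|^{1/2}$, the imaginary case being immediate. You just make explicit what the paper compresses, such as $|\sigma(\mu)|\gg(d')^{1/2}k\Delta^{-1/4}$, and absorbing the implied constants into $|a|^C$ is legitimate because $a\neq-1$ forces $|a|\geq2$ when $K$ is real.
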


\begin{proof}
If $K$ is imaginary, the embeddings of $\alpha$ and $\beta$ have absolute values $E_1^{1/2}$ and $E_2^{1/2}$, respectively, so the bounds hold immediately.

If $K$ is real, both embeddings of $\mu$ satisfy~\eqref{e:ideal-generator-embedding-bound}, and their product has absolute value $d'k$.
Using $1\leq k\ll|a|^{1/2}$ and dividing by $r$ gives the first pair of inequalities.
When $n\geq2|a|^{1/2}$, the two embeddings of $\zeta$ have comparable size, so $\mu^2\nu=k^2\zeta$ gives the second pair.

For uniqueness in either case, the two coordinate pairs determine
\[
 \zeta=\frac{r^2\ell}{k^2}\alpha^2\beta.
\]
Its coefficient of $\sqrt{-a}$ is $1/d_2$, which determines $d_2$.
The constant term then determines $n$.
The norm identities in~\eqref{e:square-map-elements} recover $d'$ and $w$, and finally $d=d_2d'$.
\end{proof}

\subsection{Embedding coordinates and quadratic equations}\label{s:embedding-coordinates}\label{s:binary-quadratic-count}

We now choose parameters for the determinant method and coordinates for the lattice count in Section~\ref{s:unbalanced}.
Fix a squarefree integer $a\ne0,-1$ and consider positive integer solutions $(d,w,n)$ of~$d^2w=n^2+a$, with $w$ squarefree and $n\geq2|a|^{1/2}$.
Fix the values $k,r,\ell$ in their representations from Lemma~\ref{L:square-map-coordinates}.
Let $\alpha,\beta$ be the corresponding elements, with absolute norms $E_1,E_2$ from~\eqref{e:invariant-scalar-data}, and write
\begin{equation}\label{e:embedding-ratios}
 \rho_1=\frac{\alpha}{\bar\alpha},\qquad
 \rho_2=\frac{\bar\beta}{\beta}.
\end{equation}

\begin{lemma}\label{L:embedding-coordinates}
These solutions $(d,w,n)$ can be partitioned into $O(\log(2|a|))$ sets.
On each set, fixed nonzero multiples of $\rho_1,\rho_2$ have the form $e^{t_1},e^{t_2}$ when $K$ is real, and $e^{it_1},e^{it_2}$ when $K$ is imaginary.
The real parameters $t_1,t_2$ lie in fixed bounded intervals and satisfy
\[
 t_2=2t_1+O(|a|^{1/2}/n).
\]
The implied constants are absolute.
\end{lemma}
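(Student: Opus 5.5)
The starting point is the identity $\mu^2\nu=k^2\zeta$ from Lemma~\ref{L:square-map-coordinates}, rewritten as $\alpha^2\beta=k^2\zeta/(r^2\ell)$. Dividing this by its conjugate gives
\[
 \rho_1^2=\frac{\alpha^2}{\bar\alpha^2}=\frac{\zeta}{\bar\zeta}\cdot\frac{\bar\beta}{\beta}=\frac{\zeta}{\bar\zeta}\,\rho_2 ,
\]
so $\rho_2=\rho_1^2\,\bar\zeta/\zeta$. It then remains to do three things: show that $\bar\zeta/\zeta=1+O(|a|^{1/2}/n)$, take logarithms, and control the ranges of the logarithms by a partition into $O(\log(2|a|))$ pieces.

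\emph{Step 1: the ratio $\bar\zeta/\zeta$.} We have $\zeta=(n+\sqrt{-a})/d_2$. If $a<0$, then $K$ is real and $\bar\zeta/\zeta=(n-\sqrt{|a|})/(n+\sqrt{|a|})=1+O(|a|^{1/2}/n)$, which is positive since $n\geq2|a|^{1/2}$. If $a>0$, then $\bar\zeta/\zeta$ has modulus one and argument $-2\arctan(\sqrt a/n)=O(|a|^{1/2}/n)$. In both cases $\log(\bar\zeta/\zeta)=O(|a|^{1/2}/n)$, with the principal branch.

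\emph{Step 2: imaginary $K$.} Here $\rho_1,\rho_2$ lie on the unit circle. Write $\rho_1=e^{i\phi_1}$ with $\phi_1\in[0,2\pi)$. Split $[0,2\pi)$ into $O(1)$ arcs of length less than $\pi/4$, and multiply by fixed unit constants $c_1$ and $c_2=c_1^2$ so that $c_1\rho_1=e^{it_1}$ with $t_1$ in a short bounded interval. Then $c_2\rho_2=e^{i(2t_1+O(|a|^{1/2}/n))}$ for suitable lifts. This needs only $O(1)$ sets, and no logarithmic factor appears.

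\emph{Step 3: real $K$, the main obstacle.} The difficulty is that $\log|\rho_1|$ need not be bounded. Lemma~\ref{L:square-map-uniformity} gives $|\sigma(\alpha)|=|a|^{O(1)}E_1^{1/2}$ at both embeddings, so $|\rho_1|\in[|a|^{-2C},|a|^{2C}]$ and $\log|\rho_1|$ lies in an interval of length $O(\log(2|a|))$. Cover this interval by $O(\log(2|a|))$ unit intervals $J$. For each $J$ with left endpoint $\lambda_J$, and each choice of the signs of $\rho_1,\rho_2$, take $c_1=\pm e^{-\lambda_J}$ and $c_2=\pm e^{-2\lambda_J}$, choosing the signs so that $c_1\rho_1,c_2\rho_2>0$. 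Then $t_1=\log(c_1\rho_1)\in[0,1)$. Since $\bar\zeta/\zeta>0$, the sign of $\rho_2$ equals the sign of $\rho_1^2$, which is positive, so only four sign classes can occur. The relation $\rho_2=\rho_1^2\bar\zeta/\zeta$ then gives
\[
 t_2=\log(c_2\rho_2)=2t_1+\log(\bar\zeta/\zeta)=2t_1+O(|a|^{1/2}/n),
\]
and $t_2$ lies in a bounded interval because $|a|^{1/2}/n\leq1/2$. All constants involved are absolute, so the solutions are partitioned into $O(\log(2|a|))$ sets as claimed. The only delicate point is ensuring that the normalisations $c_1,c_2$ are fixed on each set and independent of the solution, and the construction above guarantees this.
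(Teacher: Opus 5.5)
Your proposal is correct and follows essentially the same route as the paper. Like the paper, you conjugate $\alpha^2\beta=k^2\zeta/(r^2\ell)$ to get $\rho_2=\rho_1^2\bar\zeta/\zeta$, bound $\log(\bar\zeta/\zeta)=O(|a|^{1/2}/n)$ in both cases, and in the real case split by the sign of $\rho_1$ and a logarithmic scale of $|\rho_1|$ (your unit intervals in $\log|\rho_1|$ are the paper's dyadic ranges $q\leq|\rho_1|<2q$). One slip: since $\rho_2>0$ always, only two sign classes occur rather than four, which is harmless.
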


\begin{proof}
Taking conjugates in $\alpha^2\beta=k^2\zeta/(r^2\ell)$ gives
\begin{equation}\label{e:invariant-square-map}
 \rho_1^{-2}\rho_2=\frac{\bar\zeta}{\zeta}
 =1-\frac{\zeta-\bar\zeta}{\zeta}.
\end{equation}

When $K$ is real, Lemma~\ref{L:square-map-uniformity} gives $|a|^{-O(1)}\ll|\rho_1|\ll |a|^{O(1)}$.
Fix the sign of $\rho_1$ and a dyadic range $q\leq|\rho_1|<2q$; there are $O(\log(2|a|))$ choices.
Since $\Nm\beta>0$, we have $\rho_2>0$ and may define
\begin{equation}\label{e:normalized-embedding-coordinates}
 t_1=\log\frac{|\rho_1|}{q},\qquad
 t_2=\log\frac{\rho_2}{q^2}.
\end{equation}
By~\eqref{e:invariant-square-map},
\[
 t_2-2t_1=\log\frac{\bar\zeta}{\zeta}
 =\log\frac{n-\sqrt{|a|}}{n+\sqrt{|a|}}.
\]
Since $n\geq2\sqrt{|a|}$, the ratio inside the logarithm lies in $[1/3,1]$ and $|t_2-2t_1|\ll\sqrt{|a|}/n$.
In particular, the parameters lie in the fixed intervals
\[
 0\leq t_1<\log2,\qquad -\log3\leq t_2<2\log2.
\]

When $K$ is imaginary, the ratios $\rho_1=\alpha/\bar\alpha$ and $\rho_2=\bar\beta/\beta$ have absolute value $1$.
Since $\zeta=(n+i\sqrt a)/d_2$, the ratio $\bar\zeta/\zeta$ has principal argument $\varepsilon=-2\arctan(\sqrt a/n)$.
In particular,
\[
 \frac{\bar\zeta}{\zeta}=e^{i\varepsilon},\qquad
 |\varepsilon|\leq\frac{2\sqrt a}{n}\leq1.
\]
Choose the arguments of $\rho_1,\rho_2$ by putting
\begin{equation}\label{e:angular-embedding-coordinates}
 t_1=\arg\rho_1\in[-\pi,\pi),\qquad t_2=2t_1+\varepsilon.
\end{equation}
Equation~\eqref{e:invariant-square-map} gives $e^{it_2}=\rho_1^2e^{i\varepsilon}=\rho_2$, so $t_2$ is indeed an argument of $\rho_2$.
Since $|t_2|\leq2\pi+1$, both parameters range over fixed bounded intervals.
Partition these intervals into $O(1)$ subintervals of length at most $\pi/4$ and work with one pair at a time.
Each subinterval corresponds to an arc of the unit circle on which the chosen argument is continuous.
\end{proof}

We next choose the coordinates for the lattice count.
Fix one set constructed in the proof of Lemma~\ref{L:embedding-coordinates}.
Write $T_1,T_2$ for invertible real changes from the integral coordinates of $\alpha,\beta$, respectively, and put $\Lambda_i=T_i\ZZ^2$ for $i=1,2$.

\begin{lemma}\label{L:embedding-lattices}
The matrices $T_1,T_2$ can be chosen independently of the solution in this set.
For $i=1,2$, the slope $x/y=s_i(t_i)$ of the image $(x,y)$ is smooth and
\[
 |y|\asymp E_i^{1/2},\qquad
 |s_i(t_i)|\ll1,\qquad |s_i'(t_i)|\asymp1.
\]
Here $s_i'$ is the derivative of $s_i$ with respect to $t_i$.
We also have
\[
 \det\Lambda_i\asymp\Delta^{1/2},\qquad
 \lambda_1(\Lambda_i)\geq1,\qquad
 \lVert T_i\rVert+\lVert T_i^{-1}\rVert\ll |a|.
\]
All implied constants are absolute.
\end{lemma}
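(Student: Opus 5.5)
We construct $T_i$ from the embeddings of $K$, adapted to the set fixed in Lemma~\ref{L:embedding-coordinates}.

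\emph{Real case.} Let $\sigma_1,\sigma_2$ be the two real embeddings. Put $\gamma=x_1+x_2\omega$ for $\alpha$ (respectively $y_1+y_2\omega$ for $\beta$). The map $(x_1,x_2)\mapsto(\sigma_1(\gamma),\sigma_2(\gamma))$ is linear, and its image lattice has determinant $\Delta^{1/2}$.

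On a fixed set the sign of $\rho_1$ and the dyadic range $q$ are fixed. So $\sigma_1(\alpha)=\pm q e^{t_1}\sigma_2(\alpha)$. By Lemma~\ref{L:square-map-uniformity}, $|\sigma_i(\alpha)|$ lies within factors $|a|^{\pm C}$ of $E_1^{1/2}$. The product of the two embeddings equals $\pm E_1$, so with $q$ fixed both are in fact $\asymp E_1^{1/2}$ up to a factor depending on $q$.

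To normalize, rescale the two coordinates by $q^{-1/2}$ and $q^{1/2}$ respectively. This is a determinant-one diagonal matrix, and it makes both coordinates of size $\asymp E_1^{1/2}$. Then define $T_1$ as this rescaled embedding matrix followed by the rotation $(X,Y)\mapsto(X-Y,X+Y)/\sqrt2$. The point is to take slope $x/y=s_1(t_1)=(\pm e^{t_1}-1)/(\pm e^{t_1}+1)$ or its reciprocal, whichever keeps the slope bounded. With $t_1$ in a bounded interval, this function is smooth and bounded with derivative $\asymp1$, and $|y|\asymp E_1^{1/2}$.

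If the denominator could vanish, namely for sign $-$ and $t_1$ near $0$, we choose the other rotation instead, possibly after a further $O(1)$ subdivision of the $t_1$-interval. The same construction works for $\beta$, using $\rho_2$, $q^2$ and $t_2$.

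\emph{Imaginary case.} Use the real and imaginary parts of one complex embedding. On each subinterval of length at most $\pi/4$, the argument $\phi$ of $\alpha$ satisfies $t_1=2\phi+{}$const. Rotate so that this arc avoids the horizontal axis. Then $s(t)=\cot\phi$ is smooth and bounded with derivative $\asymp1$, and $|y|\asymp|\alpha|=E_1^{1/2}$.

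\emph{Remaining bounds.} In both cases $\det\Lambda_i\asymp\Delta^{1/2}$, since rotations and determinant-one scalings preserve covolume.

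For $\lambda_1\geq1$: every nonzero lattice vector comes from a nonzero algebraic integer $\gamma$. In the real case, $|\sigma_1(\gamma)\sigma_2(\gamma)|\geq1$ and the scaling preserves this product, so $\max|\text{coord}|\geq1$ and hence the Euclidean norm is $\geq1$. In the imaginary case $|\gamma|^2=\Nm\gamma\geq1$.

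For $\lVert T_i\rVert+\lVert T_i^{-1}\rVert\ll|a|$: the entries of the embedding matrix are $O(|a|^{1/2})$. Its inverse has entries bounded by $O(|a|^{1/2})/\Delta^{1/2}\ll1$.

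\emph{Main obstacle.} The scaling by $q^{\pm1/2}$ threatens the norm bound, since $q$ ranges up to $|a|^{O(1)}$. The fix is to take $q$ in the range $|a|^{-1}\ll q\ll|a|$ rather than the cruder $|a|^{-C}$. To justify this, use \eqref{e:ideal-generator-embedding-bound} together with $|\sigma_1\sigma_2|=E_1$ and $r\geq1$. For $\beta$, use $\mu^2\nu=k^2\zeta$ with $k\ll|a|^{1/2}$ and the comparable embeddings of $\zeta$.

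If this range still leaves an extra power of $|a|$, I would enlarge the partition into $O(\log(2|a|))$ sets. Within each set, choose the scaling from a reduced basis of the fixed lattice, via \eqref{e:reduced-change-bounds}. This keeps $\lVert T_i\rVert\lVert T_i^{-1}\rVert\ll|a|$, which the later arguments need.
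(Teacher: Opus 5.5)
Your construction follows the paper's route: embedding coordinates of $\mathcal O_K$, a determinant-one diagonal rescaling by powers of the dyadic scale $q$ in the real case, a fixed rotation on short arcs in the imaginary case, covolume $\asymp\Delta^{1/2}$, and $\lambda_1\geq1$ from $|\Nm\gamma|\geq1$. The extra $45^\circ$ rotation in the real case is unnecessary. After rescaling, the slopes are already $s_1(t_1)=\operatorname{sgn}(\rho_1)e^{t_1}$ and $s_2(t_2)=e^{-t_2}$. These satisfy $|s_i|=|s_i'|\asymp1$ on the fixed parameter intervals, so the vanishing denominator you patch never arises.

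The gap is in the final bound $\lVert T_2\rVert\ll|a|$. Since $\rho_2\asymp q^2$, the element $\beta$ is rescaled by $q^{\pm1}$, not $q^{\pm1/2}$. With your range $|a|^{-1}\ll q\ll|a|$, this rescaling has norm up to $|a|$. Composing it with the embedding matrix of norm $O(|a|^{1/2})$ only gives $\lVert T_2\rVert\ll|a|^{3/2}$.

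The identity $\mu^2\nu=k^2\zeta$ does not help here. It only yields $\rho_2=(\bar\zeta/\zeta)\rho_1^2$, which transfers whatever range you already have for $q$. Your fallback does not work either. Passing to a reduced basis is a change by an element of $\mathrm{GL}_2(\ZZ)$, which leaves the lattice and the image points unchanged, so it cannot replace the diagonal rescaling that normalizes $|y|$ and the slope. Refining the partition also does not shrink the range of $q$.

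What you need is $\max(q,q^{-1})\ll|a|^{1/2}$, and this follows from the ingredients you name. Since $\rho_1=\alpha/\bar\alpha=\mu/\bar\mu$, the bound~\eqref{e:ideal-generator-embedding-bound} and $|\mu\bar\mu|=d'k$ give
\[
 |\rho_1|=\frac{|\mu|^2}{d'k}\ll\frac{\Delta^{1/2}}{k},
 \qquad
 |\rho_1|^{-1}=\frac{|\bar\mu|^2}{d'k}\ll\frac{\Delta^{1/2}}{k}.
\]
Using $k\geq1$ and $\Delta\leq4|a|$, this gives $\max(q,q^{-1})\ll|a|^{1/2}$. Both rescalings then have norm $O(|a|^{1/2})$, so $\lVert T_i\rVert\ll|a|$ and $\lVert T_i^{-1}\rVert\ll|a|^{1/2}$. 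The later applications only use a bound polynomial in $|a|$, but the lemma as stated needs this sharper range.
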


\begin{proof}
When $K$ is real, use the dyadic scale $q$ from~\eqref{e:normalized-embedding-coordinates} and rescale the embedding coordinates by
\[
 (\alpha,\bar\alpha)\mapsto
 (q^{-1/2}\alpha,q^{1/2}\bar\alpha),\qquad
 (\beta,\bar\beta)\mapsto(q\beta,q^{-1}\bar\beta).
\]
The slopes $x/y$ are $s_1(t_1)=\operatorname{sgn}(\rho_1)e^{t_1}$ for $\alpha$ and $s_2(t_2)=e^{-t_2}$ for $\beta$.
Thus, for $i=1,2$,
\[
 |s_i(t_i)|=|s_i'(t_i)|\asymp1.
\]
Each rescaling preserves the product of the embeddings, so $|xy|=E_i$.
Since $|y|^2=|xy|/|s_i(t_i)|$, we obtain $|y|\asymp E_i^{1/2}$.

When $K$ is imaginary, the parameter intervals chosen in the preceding proof have length at most $\pi/4$.
Use the real and imaginary parts of each element as coordinates, rotating on each fixed arc so that $|y|\asymp E_i^{1/2}$ and $|s_i(t_i)|=|x/y|\ll1$.
Here $s_1(t_1)=\cot(t_1/2+\phi_1)$ and $s_2(t_2)=\cot(-t_2/2+\phi_2)$, where $\phi_1,\phi_2$ are fixed rotation angles.
Hence, for $i=1,2$,
\[
 |s_i'(t_i)|=\frac{1+s_i(t_i)^2}{2}\asymp1.
\]

These rescalings or rotations are fixed on the chosen set.
They define $T_1,T_2$ and give the claimed bounds for $|y|$, $s_i$, and $s_i'$.
The real embedding lattice has determinant $\Delta^{1/2}$, and the complex embedding lattice has determinant $\Delta^{1/2}/2$.
The rescalings and rotations have determinant one, so these determinants are unchanged.
Every nonzero algebraic integer has nonzero integer norm, which bounds the length of its image below by $1$.

The unscaled embedding matrices have norm $O(|a|^{1/2})$, and their inverses have norm $O(1)$.
When $K$ is real, the bound~\eqref{e:ideal-generator-embedding-bound} and $|\Nm\mu|=d'k$ give
\[
 \max(q,q^{-1})
 \ll\max(|\rho_1|,|\rho_1|^{-1})
 \ll\frac{\Delta^{1/2}}k\ll |a|^{1/2}.
\]
Thus the diagonal rescalings and their inverses have norm $O(|a|^{1/2})$, while the rotations in the imaginary case have norm one.
\end{proof}

The next lemma gives the lattice sum needed for the square sieve.
We work within one set in the partition of solutions given by Lemma~\ref{L:embedding-coordinates}, with the lattices and slope functions from Lemma~\ref{L:embedding-lattices}.
For $M\geq1$, each parameter range is covered by a grid of intervals of length $M^{-1}$, starting at its left endpoint.
For $i=1,2$, let $J_i$ denote one of these intervals in the range of $t_i$.

We consider solutions $(d,w,n)$ of $d^2w=n^2+a$ with $w\sim W$ and $n\geq M|a|^{1/2}$, where $W\geq2$.
The norm formula~\eqref{e:invariant-scalar-data} for $\beta=y_1+y_2\omega$ gives
\[
 R=W(k/\ell)^2\asymp E_2.
\]
Writing $t_0$ for the left endpoint of $J_2$, we define $\Lambda_{J_2}$ as the image of $\Lambda_2$ under
\[
 (x,y)\mapsto
 \left(\frac{M}{R^{1/2}}(x-s_2(t_0)y),\frac{y}{R^{1/2}}\right),
\]
as in~\eqref{e:intrinsic-interval-map}.

\begin{lemma}\label{L:embedding-intervals}
We have
\[
 \sum_{J_2}\frac1{\lambda_1(\Lambda_{J_2})}
 \ll R^{1/2}\log(2R)
 +\left(\frac{RM}{\Delta^{1/2}}\right)^{1/2},
\]
where the sum is over intervals $J_2$ containing the parameter $t_2$ of at least one such solution.
The implied constant is absolute.
\end{lemma}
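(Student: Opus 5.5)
This is a direct application of Lemma~\ref{L:intrinsic-completion} to the lattice $\Lambda=\Lambda_2$ from Lemma~\ref{L:embedding-lattices}, with parameter $R=W(k/\ell)^2$. The hypotheses of that lemma come straight from Lemma~\ref{L:embedding-lattices}:
\begin{itemize}
\item $\lambda_1(\Lambda_2)\geq1$;
\item $M\geq1$ by assumption, and $R\geq2$ after enlarging constants (note $k\geq\ell$ is not needed; if $R<2$ the bound is trivial since there are $O(M)$ intervals and we can absorb this);
\item $\det\Lambda_2\asymp\Delta^{1/2}$, so $(RM/\det\Lambda_2)^{1/2}\asymp(RM/\Delta^{1/2})^{1/2}$.
\end{itemize}
The output of Lemma~\ref{L:intrinsic-completion} is then exactly the right-hand side claimed.

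The points being counted must also have the right shape. For a solution with parameter $t_2\in J_2$, the image $(x,y)=T_2(y_1,y_2)$ of $\beta$ satisfies $|y|\asymp E_2^{1/2}\asymp R^{1/2}$, since $E_2=w(k/\ell)^2$ and $w\sim W$. Its slope is $x/y=s_2(t_2)$. So the map defining $\Lambda_{J_2}$ sends this vector to
\[
 \left(\frac{M}{R^{1/2}}\bigl(s_2(t_2)-s_2(t_0)\bigr)y,\ \frac{y}{R^{1/2}}\right).
\]
This lies in a fixed bounded rectangle $\mathcal R$, because $|s_2(t_2)-s_2(t_0)|\ll M^{-1}$ by the bound $|s_2'|\asymp1$ and $|J_2|=M^{-1}$. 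Every interval $J_2$ in the sum therefore belongs to the set $\mathcal I$ of Lemma~\ref{L:intrinsic-completion}, provided the image is a nonzero vector, which holds since $\beta\ne0$.

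The one point needing care, and where I expect the main obstacle, is that Lemma~\ref{L:intrinsic-completion} is stated for intervals of length $M^{-1}$ in the slope $x/y$. Here the intervals have length $M^{-1}$ in the parameter $t_2$, and the endpoint used is $s_2(t_0)$. The remark after Lemma~\ref{L:intrinsic-completion} covers exactly this: intervals of length $M^{-1}$ in a smooth function of the slope whose derivative is bounded above and away from zero. In our setting $t_2=s_2^{-1}(x/y)$, and $|s_2'|\asymp1$ with $|s_2|\ll1$ on a bounded range, by Lemma~\ref{L:embedding-lattices}. The implied constants depend only on these absolute bounds and on $\mathcal R$, so they are absolute.

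I would also check that the choice of $T_2$ is fixed on the chosen set; Lemma~\ref{L:embedding-lattices} gives this independence of the solution. This ensures $\Lambda_{J_2}$ depends only on $J_2$, so the sum is over a single family of lattices. Finally, $R$ is fixed once $W$, $k$ and $\ell$ are fixed, and $k,\ell$ are fixed in this subsection. The dependence of $W$ only through dyadic $w\sim W$ affects $|y|\asymp R^{1/2}$ by an absolute factor only.
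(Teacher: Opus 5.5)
Your route is the paper's: you check that each solution vector lands in a fixed bounded rectangle under the map defining $\Lambda_{J_2}$. You take $\lambda_1(\Lambda_2)\geq1$, $\det\Lambda_2\asymp\Delta^{1/2}$ and $|s_2'|\asymp1$ from Lemma~\ref{L:embedding-lattices}, and apply the change-of-parameter form of Lemma~\ref{L:intrinsic-completion}. Those parts are fine.

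The step that fails as written is your verification of the hypothesis $R\geq2$. Your fallback, that for $R<2$ ``the bound is trivial since there are $O(M)$ intervals and we can absorb this'', is false. When $R<2$ the right-hand side is $O(1+M^{1/2}\Delta^{-1/4})$, which is far smaller than $M$ once $M$ is large (in the application $M\asymp W^{1-1/(8\theta^2)}$). So an $O(M)$ count of intervals cannot be absorbed.

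The paper settles this with exactly the divisibility you set aside. From $\nu=\ell\beta$ and $\Nm\nu=wk^2$ one gets $\ell^2\Nm\beta=wk^2$ with $\Nm\beta\in\ZZ$. Since $w$ is squarefree, this forces $\ell\mid k$, whence $R=W(k/\ell)^2\geq W\geq2$. Both $\ell\mid k$ and $R\geq W$ are also reused later, in the bounds for $S_1$ and $S_2$.

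Alternatively, $|\Nm\beta|\geq1$ and $w<2W$ give $R\geq1/2$ directly. Replacing $R$ by $\max(R,2)$ then changes the normalization of $\Lambda_{J_2}$, the rectangle, and the right-hand side only by bounded factors. Either way, some such argument is needed in place of the one you gave.
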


\begin{proof}
Lemma~\ref{L:embedding-coordinates} gives $|t_2-2t_1|\ll M^{-1}$.
Thus, for each fixed $J_1$, only $O(1)$ intervals $J_2$ can occur with $(t_1,t_2)\in J_1\times J_2$, and conversely.
There are $O(M)$ intervals in each parameter range, hence $O(M)$ pairs $(J_1,J_2)$ containing solutions.
We will use these counts in Section~\ref{s:unbalanced}.

Squarefreeness of $w$ and $\ell^2\Nm\beta=wk^2$ give
\begin{equation}\label{e:embedding-scale-lower-bound}
 \ell\mid k,\qquad R\geq W\geq2.
\end{equation}
By Lemma~\ref{L:embedding-lattices}, a solution with $t_2\in J_2$ satisfies
\[
 |y|\asymp R^{1/2},\qquad
 |x-s_2(t_0)y|\ll R^{1/2}/M.
\]
Its image in $\Lambda_{J_2}$ thus lies in a fixed bounded rectangle.
Lemma~\ref{L:embedding-lattices} gives $\lambda_1(\Lambda_2)\geq1$, $\det\Lambda_2\asymp\Delta^{1/2}$, and $|s_2'(t_2)|\asymp1$.
Applying the change of parameter form of Lemma~\ref{L:intrinsic-completion} gives the stated bound.
\end{proof}

An auxiliary factor depending only on $(y_1,y_2)$ fixes finitely many primitive pairs, so we also need a count with $\beta$ fixed.
For each such pair, equation~\eqref{e:square-map-integral-equation} is a binary quadratic equation in $(x_1,x_2)$.
Let $(y_1,y_2)$ be primitive, with $\beta=y_1+y_2\omega$ satisfying
\[
 \Nm\beta=wf^2,\qquad w>1\ \text{squarefree},\qquad f\in\NN.
\]
For $\xi\in\ZZ\setminus\{0\}$ and $X\geq1$, let $N_\beta(\xi;X)$ count primitive $(x_1,x_2)$ with $|x_i|\leq X$ satisfying
\begin{equation}\label{e:quadratic-count-equation}
 P_1(x_1,x_2)y_1+Q_1(x_1,x_2)y_2=\xi.
\end{equation}

\begin{lemma}\label{L:quadratic-unit-count}
We have
\[
 N_\beta(\xi;X)\ll\tau(\xi)
 \frac{\log\bigl(2|a|X(1+|y_1|+|y_2|)\bigr)}{\log(2w)}.
\]
The implied constant is absolute.
\end{lemma}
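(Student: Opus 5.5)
The plan is to reduce to Lemma~\ref{L:binary-quadratic-representations}: for fixed primitive $(y_1,y_2)$, the left-hand side of~\eqref{e:quadratic-count-equation} is an integral binary quadratic form in $(x_1,x_2)$, whose discriminant lies in the square class of $w$. By~\eqref{e:square-map-coefficient-forms}, this form is
\[
 F_\beta(x_1,x_2)=y_2x_1^2+2(y_1+by_2)x_1x_2+\bigl(by_1+(c+b^2)y_2\bigr)x_2^2 .
\]
I would first compute its discriminant directly:
\[
 4(y_1+by_2)^2-4y_2\bigl(by_1+(c+b^2)y_2\bigr)=4(y_1^2+by_1y_2-cy_2^2)=4\Nm\beta=4wf^2,
\]
since $\omega^2=b\omega+c$ gives $\Nm(y_1+y_2\omega)=y_1^2+by_1y_2-cy_2^2$. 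Conceptually, $F_\beta(\alpha)$ is the $\omega$-coordinate of $\alpha^2\beta$, a twisted trace form, so a discriminant proportional to $\Nm\beta$ is expected. Because $w>1$ is squarefree, $4wf^2$ is positive and not a square. In particular $F_\beta\neq0$ is nondegenerate.

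Next I would make the form primitive, since Lemma~\ref{L:binary-quadratic-representations} requires this. Let $g\geq1$ be the content of $F_\beta$ and set $F=F_\beta/g$. If $g\nmid\xi$ there are no solutions. Otherwise the equation becomes $F(x_1,x_2)=\xi/g$. The discriminant of $F$ is $4wf^2/g^2$, which is still a positive nonsquare, and it generates the same field $\QQ(\sqrt w)$. Hence the $w$ appearing in Lemma~\ref{L:binary-quadratic-representations} is the same $w$. Moreover $\tau(\xi/g)\leq\tau(\xi)$.

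For the height, the coefficients of $F$ are bounded by those of $F_\beta$, and $b\in\{0,1\}$ and $|c|\ll|a|$ because $\omega$ generates $\mathcal O_K$ with $K=\QQ(\sqrt{-a})$. This gives
\[
 \lVert F\rVert\ll|a|(1+|y_1|+|y_2|).
\]
Lemma~\ref{L:binary-quadratic-representations} then bounds all solutions with $|x_i|\leq X$, primitive or not, by
\[
 \tau(\xi)\,\frac{\log\bigl(2X|a|(1+|y_1|+|y_2|)\bigr)}{\log(2w)},
\]
with the absolute constant absorbing the implied constant in $\lVert F\rVert$ inside the logarithm. Restricting to primitive pairs only decreases the count, so this proves the lemma.

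The only step needing care is the discriminant identity. It is what links the field to $w$ rather than to $K$, and it is where the hypothesis that $w>1$ be squarefree enters. The rest is bookkeeping of content and height.
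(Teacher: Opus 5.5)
Your proposal is correct and follows the paper's proof: compute the discriminant as $4\Nm\beta=4wf^2$, divide by the content $g$, bound $\lVert F/g\rVert\ll|a|(1+|y_1|+|y_2|)$, and apply Lemma~\ref{L:binary-quadratic-representations} with $m=\xi/g$. The only difference is that the paper also observes $g\mid 2$ using $\gcd(y_1,y_2)=1$; as you note, this is not needed, since $\tau(\xi/g)\leq\tau(\xi)$ already suffices.
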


\begin{proof}
Let $F$ be the quadratic form on the left of~\eqref{e:quadratic-count-equation}.
Recall that $\omega^2=b\omega+c$, with $b\in\{0,1\}$.
By~\eqref{e:square-map-coefficient-forms}, the discriminant of $F$ is
\[
 4(y_1^2+by_1y_2-cy_2^2)
 =4\Nm(y_1+y_2\omega)=4wf^2>0.
\]
The coefficients of $x_1^2$ and $x_1x_2$ in $F$ are $y_2$ and $2(y_1+by_2)$, respectively.
Since $\gcd(y_1,y_2)=1$, the content $g$ of $F$ divides $2$.
If $g\nmid \xi$, there are no solutions.
Otherwise $F/g$ is primitive, with positive nonsquare discriminant and coefficient bound
\[
 \disc(F/g)=\frac{4wf^2}{g^2},
 \qquad \lVert F/g\rVert\ll |a|(1+|y_1|+|y_2|).
\]
Applying Lemma~\ref{L:binary-quadratic-representations} to $F/g$ with $m=\xi/g$ gives the claimed bound.
\end{proof}

\section{The unbalanced ranges}\label{s:unbalanced}

We now use the results from Section~\ref{s:quadratic-preliminaries} to count the factorizations~\eqref{e:factor-equation} for which one squarefree factor is small.
Let $w=uv$ with $u\sim U$, $v\sim V$, $UV\geq3$, and $\theta\geq3$, and suppose that $\min(u,v)\leq (UV)^{1/(100\theta^2)}$.

\subsection{Bounds for the counting function}

Interchanging the factors if necessary so that $v$ is the smaller one, the reduction~\eqref{e:unbalanced-equation} gives
\[
 n=ev,\qquad a=-hv,\qquad d^2w=n^2+a.
\]
For the factorizations of fundamental solutions in Section~\ref{s:pell-factorizations}, $a$ is squarefree, since $v$ is squarefree and is odd when $|h|=2$.
If $a=-1$, then $(h,v)=(1,1)$, so $e^2-ud^2=1$ and $(e+d\sqrt u)^2=\varepsilon_u$.
This gives a solution with $1<e+d\sqrt u<\varepsilon_u$, which contradicts the minimality of the fundamental solution.

For $a\in\ZZ\setminus\{0\}$ and $D,W>0$, recall from~\eqref{e:unbalanced-count} that
\[
 N_a(D,W)=\#\left\{(d,w,n)\in\ZZ_{>0}^3:
 \begin{array}{l}
 d^2w=n^2+a,\quad d\sim D,\\
 w\sim W,\quad w\ \text{squarefree}
 \end{array}\right\}.
\]
For fixed $a$, the pair $(d,w)$ determines $n$.
The range of $w$ in a box~\eqref{e:scales} is covered by taking $W=UV$ and $W=2UV$.

When $D$ is small, we can sum over $d$ and count the possible values of $n$ by the congruence $n^2\equiv-a\pmod{d^2}$.
\begin{lemma}\label{L:direct-count}
For squarefree $a\ne0$, $D\geq1$, and $W>0$,
\[
 N_a(D,W)\ll(W^{1/2}+D)\log(2D),
\]
with an absolute implied constant.
\end{lemma}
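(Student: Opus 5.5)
We sum over $d\sim D$ and, for each $d$, count the admissible $n$ directly. The bound $w<2W$ together with $d\geq D$ gives $n^2+a=d^2w<2d^2W$. Hence $n\ll dW^{1/2}+|a|^{1/2}$.

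Since $a$ is squarefree and $\gcd(n,d)^2\mid a$, we have $\gcd(n,d)=1$. So $n$ lies in one of the residue classes modulo $d^2$ with $n^2\equiv-a\pmod{d^2}$. The number of such classes is at most $\rho(d^2)$, where $\rho$ counts square roots of $-a$ modulo $d^2$. For odd prime powers $p^{2e}$ with $p\nmid a$, Hensel's lemma gives $\rho(p^{2e})\leq2$. If $p\mid a$, then $p\mid n$ would force $p^2\mid a$, so there are no solutions unless $e=0$. The prime $2$ contributes at most $4$. Thus $\rho(d^2)\leq 4\cdot2^{\omega(d)}$.

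For each class the number of $n$ in the interval is $\ll 1+(dW^{1/2}+|a|^{1/2})/d^2$. The $|a|^{1/2}$ term is awkward, since the lemma's bound is uniform in $a$. To handle it I will use the positivity of $w$ more carefully. The relation $d^2w=n^2+a$ with $w\geq1$ means $n$ is determined by $w$: for fixed $d$ and $w$ there is at most one $n>0$. When $a<0$, the constraint reads $n^2=d^2w-a$; then $n\leq\sqrt{2d^2W+|a|}$, and the interval for $n$ has length governed by $w$ varying in $[W,2W)$. Its length is $\sqrt{2d^2W+|a|}-\sqrt{d^2W+|a|}\leq d\sqrt W$. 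Similarly for $a>0$. So the relevant range for $n$ has length $\ll dW^{1/2}$, and each class contributes $\ll 1+W^{1/2}/d$.

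Summing over $d\sim D$ gives
\[
N_a(D,W)\ll \sum_{d\sim D}2^{\omega(d)}\Bigl(1+\frac{W^{1/2}}{d}\Bigr).
\]
The first part is $\ll D\log(2D)$, using $\sum_{d\leq x}2^{\omega(d)}\ll x\log x$. The second part is $\ll W^{1/2}\log(2D)$ by partial summation over the dyadic range. Together these give the claimed $(W^{1/2}+D)\log(2D)$.

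The only delicate step is removing the $|a|$-dependence by bounding the length of the $n$-range through the variation of $w$ in the dyadic interval, rather than through the size of $n$. The rest is the standard divisor-type count of quadratic congruence solutions.
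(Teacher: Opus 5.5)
Your proof is correct and follows essentially the same route as the paper. You bound the roots of $n^2\equiv -a \pmod{d^2}$ by $4\cdot 2^{\omega(d)}$ using $\gcd(a,d)=1$. For fixed $d$, the values $n^2=d^2w-a$ lie in an interval of length $d^2W$, so $n$ lies in an interval of length at most $dW^{1/2}$. Summing $2^{\omega(d)}(1+W^{1/2}/d)$ over $d\sim D$ then gives the bound.

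The initial detour through $n\ll dW^{1/2}+|a|^{1/2}$ is unnecessary. The cleanest statement of your length bound is $\sqrt y-\sqrt x\leq\sqrt{y-x}$ for $0\leq x\leq y$, which handles both signs of $a$ at once.
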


\begin{proof}
Since $a$ is squarefree, we have $\gcd(a,d)=1$ when $d^2\mid n^2+a$.
Write $\omega(d)$ for the number of distinct prime divisors of $d$.
Hensel's lemma and the Chinese remainder theorem give at most $4\cdot2^{\omega(d)}$ roots modulo $d^2$, with at most two roots at each odd prime power and at most four at the power of $2$.
For fixed $d$ and $w\sim W$, the values of $n^2$ lie in an interval of length $d^2W$, so the positive values of $n$ lie in an interval of length at most $dW^{1/2}$.
Each root therefore contributes $O(W^{1/2}/d+1)$ possible values of $n$.
Summing over $d\sim D$ gives
\[
 N_a(D,W)
 \ll\left(\frac{W^{1/2}}D+1\right)\sum_{d\leq2D}2^{\omega(d)}
 \ll(W^{1/2}+D)\log(2D).\qedhere
\]
\end{proof}

For larger $D$, we instead use the coordinates of Lemma~\ref{L:square-map-coordinates} in $K=\QQ(\sqrt{-a})$.
Let $a\ne0,-1$ be squarefree and suppose that
\begin{equation}\label{e:unbalanced-parameter-assumptions}
 \begin{gathered}
 W\geq3,\qquad
 3\leq\theta\leq c_1\sqrt{\frac{\log W}{\log\log W}}, \\
 |a|\leq2W^{1/(100\theta^2)}, \quad
 W^{1-1/(32\theta^2)}\leq D\leq W^{\theta-1/2}.
 \end{gathered}
\end{equation}
Here $c_1>0$ is an absolute constant to be chosen.
The assumption on $\theta$ will ensure that the integer determinants vanish and that the square sieve condition in Corollary~\ref{C:anisotropic-square} holds.
We now state the main result of this section, which we will prove at the end.
\begin{proposition}\label{L:uniform-unbalanced-count}
Let $a\ne0,-1$ be squarefree and assume~\eqref{e:unbalanced-parameter-assumptions} with sufficiently small $c_1$.
Then
\[
 N_a(D,W)\ll\theta^3\log^{27}(2W)|a|^{7/8}W^{1-1/(32\theta^2)}.
\]
The implied constant is absolute.
\end{proposition}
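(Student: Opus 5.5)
We will follow the proof of Proposition~\ref{P:balanced-box} and Proposition~\ref{P:balanced-ranges}, now working in the coordinates of Section~\ref{s:quadratic-preliminaries}.

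\emph{Setup.} First dispose of solutions with $n<M|a|^{1/2}$. For these, $d^2w\ll M^2|a|$, so $D^2W\ll M^2|a|$, which is impossible for our choice of $M$ below since $D\geq W^{1-1/(32\theta^2)}$. For the remaining solutions, Lemma~\ref{L:square-map-coordinates} fixes $k\ll|a|^{1/2}$ and divisors $r,\ell$ of bounded multiplicity. The number of choices of $(k,r,\ell)$ is $O(|a|^{1/2}\tau$-type$)$ factors, and I expect this to be the source of most of the power $|a|^{7/8}$. Then split into the $O(\log(2|a|))$ sets of Lemma~\ref{L:embedding-coordinates}, and partition the parameters $t_1,t_2$ into grid intervals of length $M^{-1}$. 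By Lemma~\ref{L:embedding-intervals}, there are $O(M)$ relevant pairs $(J_1,J_2)$. Pairs with fewer than $H=2(B+1)$ solutions contribute $O(\theta^2M)$.

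\emph{Vanishing of the determinant.} Take $B=\lceil\theta^2\rceil$. On a pair of intervals, write $\alpha_k$ for the local coordinate of $t_1$ about $J_1$, which is $O(M^{-1})$. Write $\beta_k$ for the deviation of $t_2$ from $2t_1$, which is $O(|a|^{1/2}/n)$. Since $n\asymp DW^{1/2}$, this is $\ll T/X$ with $X=DW^{1/2}$ and $T=|a|^{1/2}$.

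The evaluation matrix of the monomials $x_iy_1^{B-b}y_2^b$ in embedding coordinates is, after dividing row $k$ by $\sigma(\alpha)\sigma(\beta)^B$-type norm factors, exactly the matrix~\eqref{e:general-log-evaluation-matrix} with $\sigma=B+\frac12$ and $\tau=B/2$. Lemma~\ref{L:determinant-larger-error} then bounds it. Restoring the norm factors $E_1^{H/2}E_2^{HB/2}$, which are $\asymp (D|a|^{O(1)})^{H/2}W^{HB/2}$ by~\eqref{e:invariant-scalar-data} and Lemma~\ref{L:square-map-uniformity}, and the determinants of $T_1,T_2$, gives an integer determinant. Its logarithm is bounded by $-\Phi\log W+O(B^2\log(2B)+B^2\log|a|)$, with $\Phi$ as in~\eqref{e:determinant-gain}, where $p=\log M/\log W$ and $q=\log X/\log W\leq\theta$.

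Choose $M\asymp W/R^2$ with $R=W^{1/(64\theta^2)}$, or similar. Then $1-p\ll\theta^{-2}$ and $\Phi\gg\theta^2$, and the hypothesis $|a|\leq2W^{1/(100\theta^2)}$ keeps $B^2\log|a|$ below $\Phi\log W$. Hence all minors vanish. Lemma~\ref{L:quantitative-two-layer-height} then gives an auxiliary form $\Psi$ of bidegree $(1,B)$ in reduced coordinates, with $\log\lVert\Psi\rVert\ll\theta^4\log W$ using Lemma~\ref{L:embedding-lattices} and~\eqref{e:reduced-change-bounds}.

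\emph{Counting.} Write $\Psi=D_0F$ as in Lemma~\ref{L:linear-elimination}, with $G_0$ given by~\eqref{e:square-map-integral-equation} and $h=\xi\ll|a|$.

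Points with $D_0=0$ give at most $B$ primitive $(y_1,y_2)$ per pair. For each, Lemma~\ref{L:quadratic-unit-count} gives $O(\tau(\xi)\theta)$ solutions, since $\log(2w)\gg\log W$ and the heights are $W^{O(\theta)}$. These contribute $O(\theta^3M|a|^\varepsilon)$ in total.

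For the rest, $K_0$ takes nonzero square values of odd degree. Apply Corollary~\ref{C:anisotropic-square} in the reduced basis of $\Lambda_{J_2}$, with $S_1S_2\asymp R_2/(M\det\Lambda_{J_2}\cdot\ldots)\asymp W\Delta^{-1/2}/M$. This gives $\ll\log^2 W\,(W/M)^{1/4}/\lambda_1(\Lambda_{J_2})$ per pair, provided $(W/M)^{1/4}\geq C\theta^5\log W$. That condition holds for small $c_1$, exactly as in Proposition~\ref{P:balanced-ranges}. Summing with Lemma~\ref{L:embedding-intervals} gives a total of order
\[
 (W/M)^{1/4}\bigl(W^{1/2}\log W+(WM)^{1/2}\bigr)\ll W R^{-1/2}.
\]
Multiplying by the $O(|a|^{1/2+\varepsilon}\log|a|)$ choices of $(k,r,\ell)$ and the set, the power of $|a|$ coming from $T$ and $\Delta$ should be at most $|a|^{7/8}$. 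This yields $W^{1-1/(32\theta^2)}$ up to the stated logarithms.

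\emph{Main obstacle.} The hardest step is the bookkeeping of the determinant exponent in the unbalanced normalization. The row factors $\sigma=B+\frac12$ and $\tau=B/2$ must convert the integer monomials into~\eqref{e:general-log-evaluation-matrix}. The extra loss $B\log T$ from Lemma~\ref{L:determinant-larger-error}, together with the $|a|^{O(B)}$ losses from Lemma~\ref{L:square-map-uniformity}, must stay below the gain $\Phi\log W$ uniformly in $\theta$. This is the step that fixes the admissible $M$, and hence the exponent $1/(32\theta^2)$.
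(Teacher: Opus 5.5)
Your outline follows the paper's proof: the same $B=\lceil\theta^2\rceil$, Lemma~\ref{L:determinant-larger-error} with $\sigma=B+\tfrac12$, $\tau=B/2$, $T=|a|^{1/2}$, $X=DW^{1/2}$, then $S_1$ via Lemma~\ref{L:quadratic-unit-count} and $S_2$ via Corollary~\ref{C:anisotropic-square} summed with Lemma~\ref{L:embedding-intervals}. However, as written it does not reach the stated exponent. With $R=W^{1/(64\theta^2)}$ you have $M\asymp W^{1-1/(32\theta^2)}$. Your own final expression $(W/M)^{1/4}(WM)^{1/2}=W^{3/4}M^{1/4}=WR^{-1/2}$ is then $W^{1-1/(128\theta^2)}$, not $W^{1-1/(32\theta^2)}$.

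Since the sieve count is $W^{3/4}M^{1/4}$, the stated bound forces $M\asymp W^{1-1/(8\theta^2)}$, i.e.\ $p=1-1/(8\theta^2)$ as in the paper. At that value the determinant step is no longer covered by ``$1-p\ll\theta^{-2}$, hence $\Phi\gg\theta^2$''. For $q$ near $\theta$ the gain $\frac{B+1}2-\frac{q^2}{4p}$ is only about $\theta^2/4$, and the term $(1-p)B(B+1)$ is about $\theta^2/8$. After inserting $\log|a|\le\log W/(100\theta^2)+O(1)$, your loss $O(B^2\log|a|)$ is also of order $\theta^2\log W$, so its constant must be computed. The norm factors contribute at most $k^{(B+1)(2B+1)}$ with $k\ll|a|^{1/2}$; note that $E_2=w(k/\ell)^2$ carries a power of $|a|$ too, not only $E_1$. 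The change to embedding coordinates contributes $\Delta^{-(B+1)^2/2}$, and $B\log T=\frac B2\log|a|$. The net loss is $\frac{B(B+2)}2\log|a|$, roughly $\frac{\theta^2}{200}\log W$, and only this explicit value gives $\Phi-\frac{B(B+2)}{200\theta^2}\ge\theta^2/16$.

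The factor $|a|^{7/8}$ is likewise asserted rather than derived, and your intermediate formulas drop the factors that produce it. In the sieve step $S_1S_2\asymp1/\det\Lambda_{J_2}\asymp R/(M\Delta^{1/2})$ with $R=W(k/\ell)^2$, not $W\Delta^{-1/2}/M$. Keeping this, and using $\Delta\ll M^2$ to absorb the $R^{1/2}\log(2R)$ term of Lemma~\ref{L:embedding-intervals}, the contribution for fixed $k,r,\ell,z_2$ is $\ll\log^4(2|a|W)\,W^{3/4}M^{1/4}(k/\ell)^{3/2}\Delta^{-3/8}$. Summing $(k/\ell)^{3/2}$ over $r^2\ell\mid k^2z_2$ and $k\ll|a|^{1/2}$ gives $|a|^{5/4}$ up to logarithms by Lemma~\ref{L:scalar-divisor-average}, and $|a|^{5/4}\Delta^{-3/8}\asymp|a|^{7/8}$.

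Counting the triples as $O(|a|^{1/2+\varepsilon})$, as you propose, would give $|a|^{7/8+\varepsilon}$. Here $|a|^\varepsilon$ can be a positive power of $W$, so it is not absorbed by $\log^{27}(2W)$. You need the averaged divisor bound, which counts the triples as $|a|^{1/2}$ times a power of $\log(2|a|)$; the same applies to $S_1$ and to the interval pairs with fewer than $H$ points. With $M=\lceil W^{1-1/(8\theta^2)}\rceil$ and this bookkeeping, your argument becomes the paper's proof.
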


\subsection{Auxiliary forms}

We keep the hypotheses of Proposition~\ref{L:uniform-unbalanced-count}.
We seek an auxiliary equation for the solutions in each pair of short intervals in the embedding parameters.
Choose
\begin{equation}\label{e:unbalanced-localization}
 p=1-\frac1{8\theta^2},\qquad
 M=\lceil W^p\rceil,\qquad
 B=\lceil\theta^2\rceil.
\end{equation}
For solutions counted by $N_a(D,W)$ in~\eqref{e:unbalanced-count}, the equation $d^2w=n^2+a$ and the bounds~\eqref{e:unbalanced-parameter-assumptions} give
\[
 \frac{DW^{1/2}}{\sqrt2}\leq n<3DW^{1/2},\qquad
 n\geq\max(2,M)|a|^{1/2},\qquad M|a|^{1/2}\ll D,
\]
provided that $c_1$ is sufficiently small.
We may therefore use Lemmas~\ref{L:square-map-coordinates} and~\ref{L:square-map-uniformity} to represent each solution by primitive $(x_1,x_2)$ and $(y_1,y_2)$.
The corresponding elements $\alpha,\beta\in\mathcal O_K$ have absolute norms $E_1,E_2$ from~\eqref{e:invariant-scalar-data} and embedding ratios $\rho_1,\rho_2$ from~\eqref{e:embedding-ratios}.

Fix $k,r,\ell,z_2$ and one of the $O(\log(2|a|))$ sets in Lemma~\ref{L:embedding-coordinates}.
Use its parameters $t_1,t_2$, defined by~\eqref{e:normalized-embedding-coordinates} when $K$ is real and by~\eqref{e:angular-embedding-coordinates} when $K$ is imaginary.
Within this set, let $J_1,J_2$ be intervals for $t_1,t_2$ chosen as in Lemma~\ref{L:embedding-intervals}.
We count solutions with the fixed parameters above that satisfy
\begin{equation}\label{e:unbalanced-interval-pair}
 t_1\in J_1,\qquad t_2\in J_2,\qquad |J_1|=|J_2|=M^{-1}.
\end{equation}
As shown in the proof of Lemma~\ref{L:embedding-intervals}, each interval occurs in only $O(1)$ pairs containing solutions, and there are $O(M)$ such pairs.

With the parameters above, an auxiliary form for $(J_1,J_2)$ is a nonzero integral bihomogeneous form $\Psi$ of bidegree at most $(1,B)$ such that
\[
 \Psi(x_1,x_2;y_1,y_2)=0
\]
at the coordinate pairs of all solutions in those intervals.
Recall $H=2(B+1)$ is the number of monomials of bidegree $(1,B)$.
\begin{lemma}\label{L:unbalanced-auxiliary}
Every pair $(J_1,J_2)$ containing at least $H$ solutions admits such an auxiliary form.
\end{lemma}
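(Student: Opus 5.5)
We plan to show that, for any $H$ solutions in one pair $(J_1,J_2)$, the $H\times H$ integer minor $\Delta_1$ of the matrix $\mathcal A$ of Lemma~\ref{L:quantitative-two-layer-height} vanishes. The entries of $\mathcal A$ are $x_iy_1^{B-b}y_2^b$, evaluated at the primitive coordinates supplied by Lemma~\ref{L:square-map-coordinates}. Since $\Delta_1\in\ZZ$, it suffices to prove $|\Delta_1|<1$. Then $\operatorname{rank}\mathcal A<H$, and a nontrivial integral relation among the columns, constructed as in the proof of Lemma~\ref{L:quantitative-two-layer-height}, gives the auxiliary form. The height bound recorded there is only needed later.

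\textbf{Step 1: passing to embedding coordinates.} The first step is to replace the integral coordinates by embedding coordinates. The pair $(x_1,x_2)$ and the pair $(\alpha,\bar\alpha)$, or real and imaginary parts in the imaginary case, are related by a fixed linear change with norm $|a|^{O(1)}$. Likewise $(y_1,y_2)$ is related to $(\beta,\bar\beta)$. The induced change on the space of bidegree $(1,B)$ monomials therefore multiplies the determinant by $|a|^{O(B^2)}$ at most. In the new coordinates the monomials are $\alpha^{1-a}\bar\alpha^{a}\beta^{B-b}\bar\beta^{b}$. Factoring $\alpha\beta^B$ from each row leaves $\rho_1^{-a}\rho_2^{b}$. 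After replacing $\rho_1$ by its fixed multiple $e^{t_1}$ or $e^{it_1}$ from Lemma~\ref{L:embedding-coordinates}, and reindexing $a\mapsto1-a$, these become $\exp((a+2b)t_1+b(t_2-2t_1))$ up to fixed constants. We then normalize the row factor $\alpha\beta^B$ by $E_1^{1/2}E_2^{B/2}$. Using $|\alpha|\asymp E_1^{1/2}|\rho_1|^{\pm1/2}$ and the analogous expression for $\beta$, this produces precisely the row twist $e^{-\sigma\alpha_k-\tau\beta_k}$ with $\sigma=B+\tfrac12$ and $\tau=B/2$. Here $\alpha_k=t_1-c_1$ for a centre $c_1$ of $J_1$, and $\beta_k=(t_2-2t_1)-c_2$. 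The result is the matrix~\eqref{e:general-log-evaluation-matrix}.

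\textbf{Step 2: applying the determinant estimate.} The local bounds come from Lemma~\ref{L:embedding-coordinates}. They give $|\alpha_k|\ll M^{-1}$ and $|\beta_k|\ll|a|^{1/2}/n\ll T/X$, where we take $X=DW^{1/2}$ and $T\asymp|a|^{1/2}$. The conditions $M<X$ and $T<X/M$ follow from $M|a|^{1/2}\ll D$. So Lemma~\ref{L:determinant-larger-error} applies, and gives
\[
 \log|\Delta_1|\leq (B+1)\bigl(\log E_1+B\log E_2\bigr)-B(B+1)\log M-(B+1)\log X+\frac{(\log X)^2}{4\log M}+O(B^2+B^2\log(2|a|)).
\]
To evaluate this, we insert $E_1\ll D|a|^{1/2}$ and $E_2\ll W|a|$. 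We write $\log D=\delta\log W$ with $\delta\leq\theta-\tfrac12$, and use $\log M=p\log W$. In units of $\log W$, the main terms are then
\[
 B(B+1)(1-p)-\frac{B+1}2+\frac{(\delta+\frac12)^2}{4p}
 \leq\frac{\theta^2+2}{8}-\frac{\theta^2}{2}+\frac{\theta^2}{4}(1+O(\theta^{-2}))\leq-\frac{\theta^2}{10}.
\]
This uses $1-p=1/(8\theta^2)$ and $\theta^2\leq B<\theta^2+1$.

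\textbf{Step 3: absorbing the error terms (main obstacle).} The delicate step is checking that the error terms are absorbed uniformly in $\theta$. The term $O(B^2\log(2|a|))$ is at most $O(\theta^2\log W/(100))$ times a small constant, since $|a|\leq2W^{1/(100\theta^2)}$. The $|a|^{O(1)}$ losses from the changes of basis are handled in the same way. The remaining term $O(B^2)=O(\theta^4)$ is $o(\theta^2\log W)$ when $c_1$ is small, because $\theta^2\leq c_1^2\log W/\log\log W$. I expect the main care to go into tracking the exact power of $|a|$ lost in Step~1 against the margin $\theta^2/10$. Once that is done, $\log|\Delta_1|<0$, hence $\Delta_1=0$, and this proves the lemma.
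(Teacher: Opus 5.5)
Your route is the paper's. You pass to $(\alpha,\bar\alpha)$ and $(\beta,\bar\beta)$, divide row $t$ by $E_{1,t}^{1/2}E_{2,t}^{B/2}$ to reach \eqref{e:general-log-evaluation-matrix} with $\sigma=B+\tfrac12$ and $\tau=B/2$, and apply Lemma~\ref{L:determinant-larger-error} with $X=DW^{1/2}$ and $T=|a|^{1/2}$. Your main-term computation is also right. The gap is Step~3, which is exactly where this lemma is decided. Since $|a|$ may be as large as $2W^{1/(100\theta^2)}$, a factor $|a|^{CB^2}$ has logarithm about $\tfrac{C\theta^2}{100}\log W$, the same order as your margin $\tfrac{\theta^2}{10}\log W$. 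So the implied constant in $O(B^2\log(2|a|))$ is not automatically ``small''. Your argument breaks if it exceeds roughly $10$, and you must compute it rather than assert it. Step~1 is also set up the wrong way round. The embedding minor equals $\Delta_1\det P$, where $P$ is the change of basis on bidegree $(1,B)$ monomials, so to bound $|\Delta_1|$ you need a \emph{lower} bound for $|\det P|$. A norm bound $|a|^{O(1)}$ for the forward change only bounds $|\det P|$ from above.

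To close the gap, compute as the paper does. The eigenvalue formula for symmetric powers gives $\det P=(\bar\omega-\omega)^{B+1}\bigl((\bar\omega-\omega)^{B(B+1)/2}\bigr)^2$. Thus $|\Delta_1|$ is the embedding minor divided by $\Delta^{(B+1)^2/2}$, so Step~1 is a gain, not a loss.

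Now write $E_1=d'k/r^2$ and $E_2=w(k/\ell)^2$; the factors of $r$ and $\ell$ only help. Beyond the main term, the row normalizations contribute at most $(B+1)(2B+1)\log k\leq\tfrac12(B+1)(2B+1)\log|a|+O(B^2)$. The change of basis contributes $-\tfrac12(B+1)^2\log\Delta\leq-\tfrac12(B+1)^2\log|a|$. Lemma~\ref{L:determinant-larger-error} contributes $\tfrac B2\log|a|$. The net is $\tfrac12B(B+2)\log|a|\leq\bigl(\tfrac{\theta^2}{200}+O(1)\bigr)\log W+O(B^2)$, well inside the margin.

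The margin is in fact robust. Even the crude bound $|\det P|^{-1}\leq e^{O(B^2)}$, which comes from the inverse change having norm $O(1)$, leaves a coefficient $B^2+2B+\tfrac12$ of $\log|a|$. That is a loss of about $\tfrac{\theta^2}{100}\log W$, which still fits, but only once the constant has actually been checked. The leftover $O(B^2\log(2B))$ is then absorbed by the upper bound on $\theta$, as you say.
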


\begin{proof}
Form the integer matrix whose columns are the values of
\[
 x_1^i x_2^{1-i}y_1^{B-j}y_2^j,\qquad i\in\{0,1\},\quad 0\leq j\leq B,
\]
at the corresponding coordinate pairs.
We will show that every minor of order $H$ vanishes.
Let $\Delta_1$ be such a minor, formed from $H$ points, and suppose that $\Delta_1\ne0$.

Apply the coordinate changes
\[
 \begin{aligned}
 (x_1,x_2)&\mapsto(\alpha,\bar\alpha)
   =(x_1+x_2\omega,x_1+x_2\bar\omega),\\
 (y_1,y_2)&\mapsto(\beta,\bar\beta)
   =(y_1+y_2\omega,y_1+y_2\bar\omega).
 \end{aligned}
\]
Each has determinant $\bar\omega-\omega$.
The eigenvalue formula for symmetric powers~\cite[Section~I.5, p.~18]{bhatia} shows that the induced change on degree $B$ monomials in $(y_1,y_2)$ has determinant $(\bar\omega-\omega)^{B(B+1)/2}$.
This change occurs twice, while the linear change in $(x_1,x_2)$ occurs $B+1$ times.
Thus $|\Delta_1|$ is multiplied by
\[
 |\omega-\bar\omega|^{B(B+1)+(B+1)}
 =\Delta^{(B+1)^2/2},\qquad \Delta=|\disc K|.
\]

For the $t$-th point, write $E_{1,t},E_{2,t}$ for its absolute norms.
Choose continuous square roots of $\rho_1,\rho_2$ on the fixed sign range or arcs.
Dividing row $t$ by a factor of absolute value $E_{1,t}^{1/2}E_{2,t}^{B/2}$ leaves entries of the form
\[
 \rho_1^{i-1/2}\rho_2^{j-B/2},\qquad i\in\{0,1\},\quad 0\leq j\leq B.
\]

To apply Lemma~\ref{L:determinant-larger-error}, we express these entries in terms of parameters close to zero.
Let $u$ be the left endpoint of $J_1$.
We make the translation
\[
 (t_1,t_2)\mapsto(t_1-u,t_2-2u)
\]
and use the same notation $t_1,t_2$ for the translated parameters.
This preserves $t_2-2t_1$, so
\begin{equation}\label{e:centred-embedding-coordinates}
 0\leq t_1<M^{-1},\qquad
 |t_2-2t_1|\ll |a|^{1/2}/n.
\end{equation}
By~\eqref{e:normalized-embedding-coordinates} and~\eqref{e:angular-embedding-coordinates}, the ratios $\rho_1,\rho_2$ are fixed nonzero multiples of $e^{t_1},e^{t_2}$ when $K$ is real, and of $e^{it_1},e^{it_2}$ when $K$ is imaginary.
Dividing out these fixed column factors leaves the absolute value of the determinant unchanged, since the exponents of each ratio sum to zero over all columns.

When $K$ is real, take $\alpha_t=t_1$ and $\beta_t=t_2-2t_1$ for the $t$-th point.
When $K$ is imaginary, take $\alpha_t=it_1$ and $\beta_t=i(t_2-2t_1)$.
The resulting matrix $\mathcal E$ has entries
\[
 \mathcal E_{t,(i,j)}
 =\exp\bigl((i+2j-B-\tfrac12)\alpha_t+(j-\tfrac B2)\beta_t\bigr).
\]
Thus it has the form~\eqref{e:general-log-evaluation-matrix}, with $(a,b)=(i,j)$, $\sigma=B+\tfrac12$, and $\tau=B/2$.
The two determinants are related by
\[
 |\Delta_1|=\Delta^{-(B+1)^2/2}
 \left(\prod_{t=1}^H E_{1,t}^{1/2}E_{2,t}^{B/2}\right)|\det\mathcal E|.
\]

To bound the normalized determinant, write
\[
 q=\frac12+\frac{\log D}{\log W},\qquad X=DW^{1/2}=W^q.
\]
The bounds on $D$ give $p<q\leq\theta$.
By~\eqref{e:centred-embedding-coordinates} and $n\asymp X$, we have
\[
 |\alpha_t|\ll M^{-1}\leq W^{-p},\qquad
 |\beta_t|\ll |a|^{1/2}/X.
\]
Also $M|a|^{1/2}\ll D$ gives $|a|^{1/2}<X/W^p$ after decreasing $c_1$ if necessary.
We may therefore apply Lemma~\ref{L:determinant-larger-error} with interval scale $W^p$ and $T=|a|^{1/2}$.
It gives
\[
 \begin{split}
 \log|\det\mathcal E|
 \leq{}&\left(-pB(B+1)-(B+1)q+\frac{q^2}{4p}\right)\log W\\
 &+\frac B2\log |a|+O(B^2).
 \end{split}
\]

We now substitute the norms~\eqref{e:invariant-scalar-data} into the relation between the two determinants.
Recall that $d_2=\gcd(d,2)$ and $d'=d/d_2$.
Since $d'\asymp D=W^{q-1/2}$ and $w\asymp W$ at each point, we obtain
\[
 \begin{aligned}
 \log|\Delta_1|
 ={}&\log|\det\mathcal E|+(B+1)(B+q-\tfrac12)\log W\\
 &+(B+1)\bigl((2B+1)\log k-2\log r-2B\log\ell\bigr)\\
 &-\frac{(B+1)^2}{2}\log\Delta+O(B^2).
 \end{aligned}
\]
The terms involving $k,r,\ell,\Delta$ are at most $\tfrac12B(B+1)\log |a|+O(B^2)$, since $k\ll |a|^{1/2}$, $r,\ell\geq1$, and $\Delta\asymp |a|$.

Combining this with the estimate for $\det\mathcal E$, we obtain
\[
 \log|\Delta_1|
 \leq-\Phi\log W+\frac{B(B+2)}2\log |a|+O(B^2\log(2B)),
\]
where, as in~\eqref{e:determinant-gain},
\[
 \Phi=\frac{B+1}{2}-\frac{q^2}{4p}-(1-p)B(B+1).
\]

It remains to show that this upper bound is negative.
The bound on $a$ gives
\[
 \log |a|\leq\frac{\log W}{100\theta^2}+O(1).
\]
Using $q\leq\theta$ and~\eqref{e:unbalanced-localization}, we find
\[
 \Phi-\frac{B(B+2)}{200\theta^2}
 \geq\frac{B+1}{2}-\frac{\theta^2}{4p}
      -\frac{B(B+1)}{8\theta^2}
      -\frac{B(B+2)}{200\theta^2}
 \geq\frac{\theta^2}{16}.
\]
The last inequality uses $\theta\geq3$.
Thus
\[
 \log|\Delta_1|
 \leq-\frac{\theta^2}{16}\log W+O(\theta^4\log(2\theta)).
\]
The upper bound on $\theta$ in~\eqref{e:unbalanced-parameter-assumptions} makes the right hand side negative when $c_1$ is sufficiently small.
This contradicts $|\Delta_1|\geq1$ for a nonzero integer determinant.
Hence every minor of order $H$ vanishes, and the evaluation matrix has rank less than $H$.
A nontrivial rational relation among its columns, after clearing denominators, gives the required form $\Psi$.
\end{proof}

We next bound the coefficients of $\Psi$ and the values of $K_0$ needed for the square sieve.
Fix a pair of intervals $J_1,J_2$ containing solutions of~\eqref{e:unbalanced-interval-pair}.
Let $\Lambda_1,\Lambda_2$ be the lattices for $\alpha,\beta$ from Lemma~\ref{L:embedding-lattices}, and put
\[
 R_1=\frac{Dk}{r^2},\qquad R_2=W\left(\frac{k}{\ell}\right)^2.
\]
The norm identities~\eqref{e:invariant-scalar-data} give $E_i\asymp R_i$.

For $i=1,2$, write $(x,y)\in\Lambda_i$ for the embedding coordinates of $\alpha$ or $\beta$, respectively.
If $t_{i,0}$ is the left endpoint of $J_i$, put $s_{i,0}=s_i(t_{i,0})$, where $s_i(t_i)=x/y$ is the slope function from Lemma~\ref{L:embedding-lattices}.
For a solution with $t_i\in J_i$, the lemma gives
\[
 |y|\asymp R_i^{1/2},\qquad
 |x-s_{i,0}y|\ll\frac{R_i^{1/2}}M.
\]
Apply~\eqref{e:intrinsic-interval-map} with $R=R_i$ and $s_0=s_{i,0}$, and denote the image lattice by $\Lambda_{J_i}$.
For $i=2$, this is the lattice used in Lemma~\ref{L:embedding-intervals}.
The two coordinates of each transformed solution vector are bounded by absolute constants.

Choose reduced bases of $\Lambda_{J_1}$ and $\Lambda_{J_2}$, and write $(x_1,x_2)$ and $(y_1,y_2)$ for the integer coordinates in these bases, respectively.
Equation~\eqref{e:square-map-integral-equation} becomes
\[
 G_0(x_1,x_2;y_1,y_2)=\xi,\qquad
 \xi=\frac{k^2z_2}{r^2\ell}\in\ZZ\setminus\{0\},
\]
where $G_0$ has bidegree $(2,1)$.

For an auxiliary form $\Psi$ that depends on $(x_1,x_2)$, write $\Psi=D_0F$ and define $K_0$ as in Lemma~\ref{L:linear-elimination}, with $\xi$ in place of $h$.
By~\eqref{e:reduced-basis-bounds}, every solution with $t_1\in J_1$ and $t_2\in J_2$ satisfies
\[
 |y_1|,|y_2|\leq T,\qquad T=\frac{C}{\lambda_1(\Lambda_{J_2})}\geq1,
\]
for a sufficiently large absolute constant $C$.
We use this value of $T$ in the definition~\eqref{e:binary-form-value-size} of $W(K_0)$.

\begin{lemma}\label{L:unbalanced-value-bound}
The auxiliary form $\Psi$ in Lemma~\ref{L:unbalanced-auxiliary} can be chosen so that its coefficients in the reduced coordinates satisfy
\[
 \log\lVert\Psi\rVert\ll\theta^4\log(2|a|W).
\]
For this choice of $\Psi$, if $\Psi$ depends on $(x_1,x_2)$ and the system
\[
 G_0(x_1,x_2;y_1,y_2)=\xi,\qquad F(x_1,x_2;y_1,y_2)=0
\]
has a solution with both $(x_1,x_2)$ and $(y_1,y_2)$ primitive, then
\[
 \log W(K_0)\ll\theta^4\log(2|a|W).
\]
The implied constants are absolute.
\end{lemma}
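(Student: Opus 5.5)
The plan is to mirror the coefficient bound in the proof of Proposition~\ref{P:balanced-ranges}: apply Lemma~\ref{L:quantitative-two-layer-height} in the reduced coordinates of $\Lambda_{J_1},\Lambda_{J_2}$, then~\eqref{e:quantitative-factor-height} for the primitive factor $F$, and finally~\eqref{e:quantitative-value-height} for $W(K_0)$.

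\textbf{Bounding the reduced coordinates.} Lemma~\ref{L:unbalanced-auxiliary} shows that the evaluation matrix in the integral coordinates of $\alpha,\beta$ has rank less than $H$. The reduced coordinates differ from these by the fixed invertible maps $T_i$ of Lemma~\ref{L:embedding-lattices}, the normalizations~\eqref{e:intrinsic-interval-map}, and unimodular changes. These are linear substitutions, applied separately in each coordinate pair, so they preserve the rank deficiency. The evaluation matrix in reduced coordinates is therefore also rank deficient, and Lemma~\ref{L:quantitative-two-layer-height} applies there directly.

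For this we need bounds $S,T$ on the reduced coordinates. By~\eqref{e:reduced-change-bounds}, $|x_i|\ll\lVert\mathbf v\rVert\lVert T^{-1}\rVert$, where $\mathbf v$ is the image in the bounded rectangle. So it suffices to bound the norm of the inverse of the composite map from $\ZZ^2$ to $\Lambda_{J_i}$. The map~\eqref{e:intrinsic-interval-map} has inverse of norm $O(R_i^{1/2}(1+|s_{i,0}|))\ll R_i^{1/2}$, and Lemma~\ref{L:embedding-lattices} gives $\lVert T_i^{-1}\rVert\ll|a|$. Hence
\[
 S\ll|a|R_1^{1/2},\qquad T\ll|a|R_2^{1/2}.
\]
Since $R_1\ll D|a|^{1/2}\leq W^{\theta}|a|$ and $R_2\ll W|a|$, we get $\log(2S)\ll\theta\log(2|a|W)$ and $\log(2T)\ll\log(2|a|W)$.

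\textbf{Height of $\Psi$.} Lemma~\ref{L:quantitative-two-layer-height} now gives
\[
 \log\lVert\Psi\rVert\leq H\bigl(\log(2S)+B\log(2T)+\tfrac12\log H\bigr)\ll\theta^2\cdot\theta^2\log(2|a|W),
\]
using $B\asymp\theta^2$ and $\log H\ll\log\theta\ll\log W$. This proves the first claim. If the construction initially yields only a rational relation, clearing denominators via the signed minors in that lemma keeps the same bound. Then~\eqref{e:quantitative-factor-height} gives $\log\lVert F\rVert\ll\theta^4\log(2|a|W)$ for the primitive factor $F$.

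\textbf{Bounding $W(K_0)$.} Apply~\eqref{e:quantitative-value-height} with $h$ replaced by $\xi$ and with $T=C/\lambda_1(\Lambda_{J_2})$. We need to control three terms.
\begin{itemize}
\item[(i)] $\log|\xi|\ll\log(2|a|)$, since $\xi=k^2z_2/(r^2\ell)$ with $k\ll|a|^{1/2}$ and $|z_2|\leq2$.
\item[(ii)] $\log\lVert G_0\rVert\ll\log(2|a|W)$.
\item[(iii)] $(2b+1)\log T\ll\theta^2\log(2|a|W)$. Here $b\leq B$, and $\lambda_1(\Lambda_{J_2})\gg R_2^{-1/2}$ because $\lambda_1(\Lambda_2)\geq1$ and~\eqref{e:intrinsic-interval-map} has norm $\ll M/R_2^{1/2}$ in its first coordinate. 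This gives $\log T\ll\log(2|a|W)$.
\end{itemize}

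For (ii), $G_0$ is obtained from the forms~\eqref{e:square-map-coefficient-forms} by substituting $(x_1,x_2)$ and $(y_1,y_2)$ through composite maps. The original coefficients are $O(|a|)$. Each substitution matrix, being $T_i^{-1}$ composed with the inverse of~\eqref{e:intrinsic-interval-map} and a unimodular change, has entries bounded by a fixed power of $|a|W^{\theta}$. Its $\mathrm{GL}_2(\ZZ)$ part is controlled by~\eqref{e:reduced-change-bounds}, since $\lVert T\rVert\lVert T^{-1}\rVert$ is polynomial in $M,R_i,|a|$. A bidegree $(2,1)$ form has only six monomials, so $\log\lVert G_0\rVert\ll\theta\log(2|a|W)$. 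This is harmless against $\theta^4$.

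Adding $2\log\lVert F\rVert$ to (i)--(iii) gives $\log W(K_0)\ll\theta^4\log(2|a|W)$.

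The main obstacle is keeping all the change-of-basis norms polynomial in $|a|W$, uniformly in the interval. This concerns in particular the real-field rescaling by $q^{\pm1/2}$ and the dependence of the normalizing map on $M$. I expect the bounds $\lVert T_i\rVert+\lVert T_i^{-1}\rVert\ll|a|$ from Lemma~\ref{L:embedding-lattices} and $M\leq 2W$ to handle these.
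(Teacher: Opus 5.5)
Your proposal is correct and follows essentially the same route as the paper. You bound the reduced coordinates via \eqref{e:reduced-change-bounds} and Lemma~\ref{L:embedding-lattices}, obtain $\Psi$ and $F$ from Lemma~\ref{L:quantitative-two-layer-height} and \eqref{e:quantitative-factor-height}, and then bound $W(K_0)$ through \eqref{e:quantitative-value-height} with $\xi$, $\lVert G_0\rVert$ and $T=C/\lambda_1(\Lambda_{J_2})$. One small fix: the lower bound $\lambda_1(\Lambda_{J_2})\gg R_2^{-1/2}$ follows because the inverse of \eqref{e:intrinsic-interval-map} has norm $\ll R_2^{1/2}$, so $\lambda_1\geq\lVert T^{-1}\rVert^{-1}$ as in Section~\ref{s:lattice-counting}; an upper bound on the forward map does not give it.
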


\begin{proof}
Let $S\geq1$ bound the absolute values of the reduced coordinates of $\alpha$.
By~\eqref{e:invariant-scalar-data}, the norms satisfy
\[
 E_1\ll |a|^{1/2}W^\theta,
 \qquad
 E_2\ll |a|W.
\]
Lemma~\ref{L:embedding-lattices} bounds the changes from integral coordinates to the rescaled or rotated embedding coordinates, and their inverses, by $O(|a|)$.
Using these bounds and $M\ll W$ in~\eqref{e:intrinsic-interval-map}, estimate~\eqref{e:reduced-change-bounds} bounds the changes to reduced bases and their inverses by $(2|a|W)^{O(\theta)}$ for $\alpha$ and $(2|a|W)^{O(1)}$ for $\beta$, and gives
\[
 \log(2S)\ll \theta\log(2|a|W),
 \qquad
 \log(2T)\ll\log(2|a|W).
\]

The unimodular coordinate changes preserve the rank of the evaluation matrix, which is less than $H$ by Lemma~\ref{L:unbalanced-auxiliary}.
Choose $\Psi$ by Lemma~\ref{L:quantitative-two-layer-height}.
Together with~\eqref{e:quantitative-factor-height}, this gives
\[
 \log\lVert\Psi\rVert,\ \log\lVert F\rVert
 \ll\theta^4\log(2|a|W)
\]
for every primitive factor $F$ of $\Psi$.
Now suppose that $\Psi$ depends on $(x_1,x_2)$ and that $G_0=\xi$, $F=0$ have a solution with both coordinate pairs primitive.
The same coordinate bounds give
\[
 \log\lVert G_0\rVert+\log(2|\xi|)
 \ll \theta\log(2|a|W).
\]
Substituting these estimates into~\eqref{e:quantitative-value-height}, with $\xi$ in place of $h$, gives the claimed bound.
\end{proof}

\subsection{Counting on the auxiliary curves}

We now count points on the auxiliary curves, starting with factors that are independent of $(x_1,x_2)$.
Each such factor restricts $(y_1:y_2)$ to finitely many directions.
For each corresponding primitive $(y_1,y_2)$, Lemma~\ref{L:quadratic-unit-count} bounds the number of solutions of the remaining quadratic equation in $(x_1,x_2)$.
To sum these bounds over $k,r,\ell,z_2$, we use the following elementary divisor estimate.
\begin{lemma}\label{L:scalar-divisor-average}
For integers $j\geq1$ and $z_2$ with $0<|z_2|\leq2$, and $T\geq2$, we have
\[
 \sum_{k\leq T}\tau(k^2z_2)^j
 \ll_j T\log^{3^j-1}(2T).
\]
\end{lemma}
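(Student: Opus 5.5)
The plan is to reduce the sum to a classical divisor moment by removing the bounded factor $z_2$ and then applying submultiplicativity of $\tau$.

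First handle $z_2$. Since $|z_2|\leq2$, we have $\tau(k^2z_2)\leq\tau(k^2)\tau(|z_2|)\leq2\tau(k^2)$, using $\tau(mn)\leq\tau(m)\tau(n)$ for all positive integers $m,n$. Hence it suffices to prove
\[
 \sum_{k\leq T}\tau(k^2)^j\ll_j T\log^{3^j-1}(2T),
\]
with the factor $2^j$ absorbed into the implied constant.

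Next, $g(k)=\tau(k^2)^j$ is multiplicative, and on prime powers $g(p^m)=(2m+1)^j$. In particular $g(p)=3^j$. I would use the standard comparison with the $3^j$-fold divisor function. Write $g=\tau_{3^j}*h$, where $\tau_{3^j}$ is the $3^j$-fold divisor function (the coefficients of $\zeta(s)^{3^j}$). Then $h$ is multiplicative with $h(p)=0$, and for $m\geq2$ we have $|h(p^m)|\leq C_j^m$. Since $h$ vanishes at primes, $h$ is supported on squarefull integers, and the Euler product $\sum_n|h(n)|n^{-\sigma}$ converges for $\sigma>1/2+\delta$, as long as we only need primes $p$ large relative to $C_j$. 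Small primes contribute a bounded factor depending on $j$.

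Then
\[
 \sum_{k\leq T}g(k)=\sum_{n\leq T}h(n)\sum_{m\leq T/n}\tau_{3^j}(m)\ll_j\sum_{n\leq T}|h(n)|\frac Tn\log^{3^j-1}(2T),
\]
using the classical bound $\sum_{m\leq x}\tau_k(m)\ll_k x\log^{k-1}(2x)$. Since $\sum_n|h(n)|/n$ converges, this gives the claim.

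An alternative that avoids Dirichlet convolution is the elementary Rankin-type bound: for $\sigma=1+1/\log(2T)$,
\[
 \sum_{k\leq T}g(k)\leq eT\prod_{p}\sum_{m\geq0}g(p^m)p^{-m\sigma}\ll_j T\exp\Bigl(3^j\sum_p p^{-\sigma}+O_j(1)\Bigr)\ll_j T\log^{3^j}(2T).
\]
This loses one logarithm, so the convolution argument is preferable for the stated exponent.

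The only mild obstacle is verifying the absolute convergence of $\sum_n|h(n)|/n$ uniformly enough. From the Euler factor identity $\sum_m g(p^m)X^m=(1-X)^{-3^j}\sum_m h(p^m)X^m$, one gets $h(p^m)=O_j(m^{c_j})$ for $m\geq2$. The factors $\sum_{m\geq2}|h(p^m)|p^{-m}\ll_j p^{-2}$ are then summable over $p$, which gives the convergence.
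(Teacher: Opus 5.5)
Your argument is correct, but the middle step takes a different route from the paper's. You handle $z_2$ exactly as the paper does, via $\tau(k^2z_2)\leq2\tau(k^2)$. Both proofs also finish with the same elementary bound $\sum_{m\leq x}\tau_r(m)\ll_r x\log^{r-1}(2x)$ for $r=3^j$.

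The difference is how you get from $\tau(k^2)^j$ to $\tau_{3^j}$. You write $g=\tau_{3^j}*h$ and then check that $\sum_n|h(n)|/n$ converges. The paper avoids the convolution entirely with two pointwise inequalities checked on prime powers: $\tau(n^2)\leq\tau_3(n)$, which amounts to $2m+1\leq\binom{m+2}{2}$, and $\tau_r(n)\tau_s(n)\leq\tau_{rs}(n)$. Together these give $\tau(k^2z_2)^j\leq2^j\tau_{3^j}(k)$ for every $k$, so no Dirichlet series manipulation or convergence check is needed.

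Your Euler-factor comparison is the more general tool. It works for any multiplicative $g$ with $g(p)=3^j$ and polynomially bounded values on prime powers. With a little more work it would also give an asymptotic with main term $c_jT\log^{3^j-1}T$, not just an upper bound. For the upper bound needed here, the pointwise domination is shorter.

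One small point. The crude bound $|h(p^m)|\leq C_j^m$ alone does not make the Euler factors converge at primes $p\leq C_j$. So the polynomial bound in your last paragraph is genuinely needed, and it does close the argument. In fact $h(p^m)=0$ for $m\geq3^j$: the power series $\sum_m(2m+1)^jX^m$ equals $Q(X)/(1-X)^{j+1}$ with $\deg Q\leq j$, so multiplying by $(1-X)^{3^j}$ leaves a polynomial of degree at most $3^j-1$.
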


\begin{proof}
Let $\tau_r(n)$ count the ordered factorizations of $n$ into $r$ positive integers.
Comparing prime powers gives $\tau(n^2)\leq\tau_3(n)$ and $\tau_r(n)\tau_s(n)\leq\tau_{rs}(n)$.
Since $|z_2|\leq2$, it follows that $\tau(k^2z_2)^j\leq2^j\tau_{3^j}(k)$.
The elementary estimate
\[
 \sum_{k\leq T}\tau_r(k)
 \leq T\left(\sum_{n\leq T}\frac1n\right)^{r-1}
 \ll_r T\log^{r-1}(2T),
\]
with $r=3^j$, proves the lemma.
\end{proof}

We retain the hypotheses of Proposition~\ref{L:uniform-unbalanced-count} and the choices of $M,B$ in~\eqref{e:unbalanced-localization}.
For each pair $(J_1,J_2)$ in~\eqref{e:unbalanced-interval-pair} containing at least $H=2(B+1)$ solutions counted by $N_a(D,W)$ from~\eqref{e:unbalanced-count}, choose an auxiliary form $\Psi$ as in Lemma~\ref{L:unbalanced-value-bound}.
In the reduced coordinates $(x_1,x_2)$ and $(y_1,y_2)$, use the factorization from Lemma~\ref{L:linear-elimination}:
\[
 \Psi(x_1,x_2;y_1,y_2)=D_0(y_1,y_2)F(x_1,x_2;y_1,y_2).
\]
If $\Psi$ is independent of $(x_1,x_2)$, put $D_0=\Psi$ and $F=1$.

We separate the solutions according to whether $D_0(y_1,y_2)=0$ and define
\begin{equation}\label{e:unbalanced-curve-counts}
 \begin{aligned}
 S_1&=\#\{(x_1,x_2,y_1,y_2):D_0(y_1,y_2)=0\},\\
 S_2&=\#\{(x_1,x_2,y_1,y_2):D_0(y_1,y_2)\ne0,\ F(x_1,x_2;y_1,y_2)=0\}.
 \end{aligned}
\end{equation}
Both counts run over all choices of $k,r,\ell,z_2$ in Lemma~\ref{L:square-map-coordinates} and all such interval pairs in every set from Lemma~\ref{L:embedding-coordinates}.
In the next two lemmas, we bound $S_1$ using binary quadratic equations and $S_2$ using the square sieve.
\begin{lemma}\label{L:uniform-exceptional-factors}
Under the hypotheses of Proposition~\ref{L:uniform-unbalanced-count}, we have
\[
 S_1\ll\theta^3|a|^{1/2}M\log^{27}(2|a|).
\]
The implied constant is absolute.
\end{lemma}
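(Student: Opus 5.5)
The plan is to bound $S_1$ by counting, for each fixed choice of parameters, the directions where $D_0$ vanishes and the solutions above each direction. Then we sum over the interval pairs, over the sets of Lemma~\ref{L:embedding-coordinates}, and over $k,r,\ell,z_2$.

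Fix $k,r,\ell,z_2$ and one of the $O(\log(2|a|))$ sets, and fix an interval pair $(J_1,J_2)$ containing at least $H$ solutions. Since $D_0$ is a nonzero binary form of degree at most $B$ in $(y_1,y_2)$, it has at most $B$ projective rational zeros. Each zero determines at most two primitive $(y_1,y_2)$ in the reduced coordinates, and hence, by unimodularity, at most two primitive integral coordinate pairs of $\beta$. For each such $\beta$ we have $\Nm\beta=wk^2/\ell^2$ with $w>1$ squarefree, since $w\sim W\geq2$. The pairs $(x_1,x_2)$ of the corresponding solutions satisfy~\eqref{e:square-map-integral-equation} with $\xi=k^2z_2/(r^2\ell)$. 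We work in the original integral coordinates, where Lemma~\ref{L:quadratic-unit-count} applies directly.

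The coordinates of $\alpha$ and $\beta$ are bounded by $(|a|W)^{O(\theta)}$, by Lemma~\ref{L:square-map-uniformity} and $E_1\ll|a|^{1/2}W^\theta$. Lemma~\ref{L:quadratic-unit-count} then gives
\[
 N_\beta(\xi;X)\ll\tau(\xi)\frac{\theta\log(2|a|W)}{\log(2w)}\ll\theta\,\tau(k^2z_2),
\]
using $w\gg W$ and $\log|a|\ll\log W$, and $\tau(\xi)\leq\tau(k^2z_2)$ because $\xi\mid k^2z_2$. Lemma~\ref{L:square-map-uniformity} shows that each pair of primitive coordinate pairs gives at most one solution. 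Hence the contribution of one interval pair is $O(B\theta\,\tau(k^2z_2))=O(\theta^3\tau(k^2z_2))$.

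Next we sum. By the proof of Lemma~\ref{L:embedding-intervals}, there are $O(M)$ interval pairs in each set. The number of sets is $O(\log(2|a|))$, and $z_2$ takes $O(1)$ values. We also need $r,\ell$ given $k$. By~\eqref{e:embedding-scale-lower-bound} we have $\ell\mid k$, and $r^2\ell\mid k^2z_2$, so the number of $(r,\ell)$ is at most $\tau(k^2z_2)^2$. This gives
\[
 S_1\ll\theta^3M\log(2|a|)\sum_{z_2}\sum_{k\ll|a|^{1/2}}\tau(k^2z_2)^3.
\]
Lemma~\ref{L:scalar-divisor-average} with $j=3$ and $T\asymp|a|^{1/2}$ (or $T=2$ if $|a|$ is small) bounds the inner sum by $|a|^{1/2}\log^{26}(2|a|)$. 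Including the factor $\log(2|a|)$ from the sets gives exactly $\theta^3|a|^{1/2}M\log^{27}(2|a|)$.

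The main obstacle is the logarithm in Lemma~\ref{L:quadratic-unit-count}: we must ensure that $\log(2|a|X(1+|y_1|+|y_2|))\ll\theta\log(2|a|W)$ while $\log(2w)\gg\log W$, so that only the factor $\theta$ survives. This requires using coordinate bounds in the integral basis rather than the reduced basis, or else controlling the reduced coordinates through~\eqref{e:reduced-change-bounds}. A second point is that the solutions with $D_0(y_1,y_2)=0$ in interval pairs with fewer than $H$ solutions need not be counted here, since $S_1$ only runs over pairs carrying an auxiliary form.
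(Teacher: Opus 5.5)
Your proposal is correct and follows the paper's proof essentially step by step. You use the same ingredients in the same order: at most $B$ rational directions from $D_0$, a return to the original integral coordinates so that Lemma~\ref{L:quadratic-unit-count} with $f=k/\ell$ gives $O(\theta\tau(\xi))$ per direction, the bound $\tau(\xi)\leq\tau(k^2z_2)$ together with at most $\tau(k^2z_2)^2$ pairs $(r,\ell)$, and finally Lemma~\ref{L:scalar-divisor-average} with $j=3$ combined with the $O(\log(2|a|))$ sets to produce $\log^{27}(2|a|)$.
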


\begin{proof}
For fixed $k,r,\ell,z_2$ and one pair of intervals, $D_0$ has degree at most $B$, so $D_0(y_1,y_2)=0$ gives at most $B$ rational slopes $y_1/y_2$ in the reduced coordinates.
We return to the original integral coordinates of Lemma~\ref{L:square-map-coordinates}, again writing them as $(x_1,x_2)$ and $(y_1,y_2)$.
Each ratio in the reduced coordinates fixes the original primitive $(y_1,y_2)$ up to sign, and $\Nm\beta=w(k/\ell)^2$ then determines $w$.
By~\eqref{e:embedding-scale-lower-bound}, $\ell\mid k$, so we may apply Lemma~\ref{L:quadratic-unit-count} with $f=k/\ell$.
Lemma~\ref{L:square-map-uniformity} bounds the integral coordinates by $(2|a|W)^{O(\theta)}$.
Lemma~\ref{L:quadratic-unit-count} therefore bounds the number of points in each primitive direction by
\[
 \ll\tau(\xi)\frac{\theta\log(2|a|W)}{\log(2w)}
 \ll\theta\tau(\xi),
\]
since $w\sim W$ and $\log(2|a|)\ll\log W$.

Summing over the $O(M)$ pairs of intervals in each of the $O(\log(2|a|))$ sets gives
\[
 O(B\theta\tau(\xi)M\log(2|a|))
\]
points with $D_0(y_1,y_2)=0$ for fixed $k,r,\ell,z_2$.
For fixed $k,z_2$, the divisibility condition $r^2\ell\mid k^2z_2$ in Lemma~\ref{L:square-map-coordinates} gives at most $\tau(k^2z_2)^2$ choices of $(r,\ell)$, each with $\tau(\xi)\leq\tau(k^2z_2)$.
Summing over $k\ll |a|^{1/2}$ by Lemma~\ref{L:scalar-divisor-average} with $j=3$ and using $B\ll\theta^2$ gives the claimed bound for $S_1$.
\end{proof}

We now use the square sieve to bound $S_2$.
\begin{lemma}\label{L:unbalanced-residual-count}
Under the hypotheses of Proposition~\ref{L:uniform-unbalanced-count}, we have
\[
 S_2\ll |a|^{7/8}W^{3/4}M^{1/4}\log^{6}(2|a|W).
\]
The implied constant is absolute.
\end{lemma}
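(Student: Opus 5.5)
We mirror the square-sieve step in the proof of Proposition~\ref{P:balanced-box}. Fix $k,r,\ell,z_2$, one of the $O(\log(2|a|))$ sets from Lemma~\ref{L:embedding-coordinates}, and an interval pair $(J_1,J_2)$ with at least $H$ solutions, and work in the reduced coordinates of $\Lambda_{J_1},\Lambda_{J_2}$. On the points counted by $S_2$ we have $F=0$ and $D_0(y_1,y_2)\ne0$, so $\Psi$ depends on $(x_1,x_2)$. Lemma~\ref{L:linear-elimination}, applied with $\xi$ in place of $h$, then shows two things: each primitive $(y_1,y_2)$ carries at most two primitive $(x_1,x_2)$, and $K_0(y_1,y_2)$ is a nonzero square of a form of odd degree at most $2B+1$. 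By Lemma~\ref{L:square-map-uniformity}, each coordinate tuple determines at most one solution $(d,w,n)$. It therefore suffices to count primitive $(y_1,y_2)$ in the bounded image rectangle with $K_0(y_1,y_2)=\square\ne0$.

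First, if $\Lambda_{J_2}$ has only one primitive direction among the solution vectors, the pair contributes $O(1)$ points. Otherwise $\lambda_2(\Lambda_{J_2})\ll1$. We then take intervals $I_1,I_2$ with lengths $S_i\asymp1/\lambda_i(\Lambda_{J_2})$, using~\eqref{e:reduced-basis-bounds}. The determinant of $\Lambda_{J_2}$ is $(M/R_2)\det\Lambda_2\asymp M\Delta^{1/2}/R_2$, which gives
\[
 S_1S_2\asymp\frac{R_2}{M\Delta^{1/2}}\gg\frac{W}{M|a|^{1/2}}\gg W^{1/(8\theta^2)}|a|^{-1/2}.
\]
Lemma~\ref{L:unbalanced-value-bound} gives $\log W(K_0)\ll\theta^4\log(2|a|W)$. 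We need $(S_1S_2)^{1/4}\geq C\log W(K_0)$, which means $W^{1/(32\theta^2)}|a|^{-1/8}\gg\theta^4\log W$. This follows from $|a|\leq2W^{1/(100\theta^2)}$ and the upper bound on $\theta$ with $c_1$ small. I expect this to be the main, though routine, obstacle: the margin is only a power $W^{c/\theta^2}$ against a factor $\theta^4\log W$, which is exactly why $\theta\ll\sqrt{\log W/\log\log W}$ is needed.

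Corollary~\ref{C:anisotropic-square} now bounds the pair's contribution by
\[
 \log^2(2W)\,S_1^{5/4}S_2^{1/4}\asymp\frac{\log^2(2W)}{\lambda_1(\Lambda_{J_2})}\Bigl(\frac{R_2}{M\Delta^{1/2}}\Bigr)^{1/4}.
\]
The one-direction pairs are dominated by the same expression. Each $J_2$ lies in $O(1)$ pairs, so we sum over $J_2$ with Lemma~\ref{L:embedding-intervals}:
\[
 \sum_{J_2}\frac1{\lambda_1}\ll R_2^{1/2}\log(2R_2)+\Bigl(\frac{R_2M}{\Delta^{1/2}}\Bigr)^{1/2}.
\]
Since $M\geq W^{p}\gg R_2^{1/2}\cdot$ (powers of $|a|$), the second term dominates up to powers of $|a|$. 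The product is then
\[
 \ll\log^3(2|a|W)\,R_2^{3/4}M^{1/4}\Delta^{-3/8}.
\]
With $R_2=W(k/\ell)^2\leq Wk^2$ and $\Delta\asymp|a|$ this becomes $W^{3/4}M^{1/4}k^{3/2}|a|^{-3/8}$, and the first term contributes at most as much.

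Finally we sum over the parameters. The sets contribute a factor $\log(2|a|)$. For fixed $k,z_2$ there are at most $\tau(k^2z_2)^2$ choices of $(r,\ell)$. The $O(1)$ values of $z_2$ are harmless. For the sum over $k\ll|a|^{1/2}$, a crude divisor bound alone would cost $|a|^\varepsilon$, so we instead apply Cauchy--Schwarz with Lemma~\ref{L:scalar-divisor-average} (using $j=2$):
\[
 \sum_{k\ll|a|^{1/2}}k^{3/2}\tau(k^2z_2)^2\ll|a|^{5/4}\log^{8}(2|a|).
\]
This gives $|a|^{5/4-3/8}=|a|^{7/8}$, which is the stated power. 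The logarithmic exponents need bookkeeping to reach $\log^6$. If the crude count exceeds this, I would tighten it by exploiting $R_2\leq W k^2/\ell^2$ together with $\ell\mid k$ when summing over $\ell$, or otherwise accept a larger fixed power of $\log$, which is harmless for Proposition~\ref{L:uniform-unbalanced-count}.
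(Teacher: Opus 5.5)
Your argument follows the paper's proof step for step: the square sieve on each interval pair in the reduced coordinates of $\Lambda_{J_2}$, the prime condition via $S_1S_2\asymp R_2/(M\Delta^{1/2})\gg W^{3/(25\theta^2)}$, and summing $1/\lambda_1(\Lambda_{J_2})$ with Lemma~\ref{L:embedding-intervals}. It correctly gives $\log^4(2|a|W)R_2^{3/4}M^{1/4}\Delta^{-3/8}$ for fixed $k,r,\ell,z_2$. One small correction: the first term of Lemma~\ref{L:embedding-intervals} is dominated because $\Delta\asymp|a|\ll M^2$, i.e.\ $M^{-1/4}\Delta^{-1/8}\ll M^{1/4}\Delta^{-3/8}$. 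The comparison $M\gg R_2^{1/2}$ plays no role.

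The gap is in the final sum over parameters, which you leave unresolved. Replacing $R_2$ by $Wk^2$ throws away the factor $\ell^{-3/2}$. You must then count pairs $(r,\ell)$ by $\tau(k^2z_2)^2$ and apply Lemma~\ref{L:scalar-divisor-average} with $j=2$, which costs $\log^{8}(2|a|)$. No Cauchy--Schwarz is involved; $j=2$ applies directly. As written, your argument therefore proves only $S_2\ll|a|^{7/8}W^{3/4}M^{1/4}\log^{12}(2|a|W)$, not the stated $\log^6$. You are right that $\log^{12}$ would still be absorbed in Proposition~\ref{L:uniform-unbalanced-count}, whose $\log^{27}$ comes from $S_1$, but it does not prove the lemma as stated.

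The missing step is short, and $\ell\mid k$ is not needed. Keep $R_2^{3/4}=W^{3/4}(k/\ell)^{3/2}$. For fixed $k,z_2$, at most $\tau(k^2z_2)$ values of $r$ satisfy $r^2\mid k^2z_2$. The weight $\ell^{-3/2}$ can be summed over all $\ell\geq1$:
\[
 \sum_{r^2\ell\mid k^2z_2}\ell^{-3/2}\leq\tau(k^2z_2)\sum_{\ell\geq1}\ell^{-3/2}\ll\tau(k^2z_2).
\]
Lemma~\ref{L:scalar-divisor-average} with $j=1$ then gives
\[
 \sum_{k\ll|a|^{1/2}}k^{3/2}\tau(k^2z_2)\ll|a|^{5/4}\log^2(2|a|).
\]
Combining this with the $\log^4(2|a|W)$ for fixed parameters and $\Delta^{-3/8}\asymp|a|^{-3/8}$ yields exactly $|a|^{7/8}W^{3/4}M^{1/4}\log^6(2|a|W)$.
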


\begin{proof}
Fix $k,r,\ell,z_2$ and one set from Lemma~\ref{L:embedding-coordinates}.
Recall the norm scale and discriminant:
\[
 R=W(k/\ell)^2,\qquad \Delta=|\disc K|.
\]
For each interval pair contributing to $S_2$, Lemma~\ref{L:linear-elimination}, with $\xi$ in place of $h$, gives a nonzero binary form $K_0$ of odd degree at most $2B+1$ whose values are nonzero squares, with at most two first coordinate pairs for each second pair.
The unimodular changes preserve primitivity, and Lemma~\ref{L:unbalanced-value-bound} bounds the values of $K_0$ on these coordinates.
For an interval $I$ in $t_2$, use the lattice $\Lambda_I$ defined before Lemma~\ref{L:embedding-intervals}, with the fixed scale $R\asymp E_2$.
If the counting rectangle contains two independent lattice vectors, then $\lambda_2(\Lambda_I)\ll1$.
By~\eqref{e:reduced-basis-bounds}, the reduced coordinates lie in intervals $I_1,I_2\subseteq[-T,T]$, with $T=C/\lambda_1(\Lambda_I)$ as in Lemma~\ref{L:unbalanced-value-bound} and
\[
 |I_i|\asymp\frac1{\lambda_i(\Lambda_I)},\qquad
 1\leq|I_2|\leq|I_1|.
\]
Put
\[
 J=\left(\frac{R}{M\Delta^{1/2}}\right)^{1/4}.
\]
The map~\eqref{e:intrinsic-interval-map} and~\eqref{e:reduced-basis-bounds} give
\[
 \det\Lambda_I\asymp\frac{M\Delta^{1/2}}R,\qquad
 (|I_1||I_2|)^{1/4}\asymp J.
\]
Since $R\geq W$ by~\eqref{e:embedding-scale-lower-bound} and $\Delta\asymp |a|$, we have
\[
 J\gg W^{3/(100\theta^2)}.
\]
The restriction on $\theta$ gives $J\gg(\log W)^4$ after decreasing $c_1$, while Lemma~\ref{L:unbalanced-value-bound} gives $\log W(K_0)\ll(\log W)^3$.
Thus the prime condition in Corollary~\ref{C:anisotropic-square} holds with interval lengths $|I_i|$ and value bound $W(K_0)$.

Applying the square sieve in Corollary~\ref{C:anisotropic-square} to $K_0$, together with the multiplicity bound in Lemma~\ref{L:linear-elimination}, bounds the contribution from each rectangle with two independent vectors by
\[
 \log^2(2|a|W)\frac{J}{\lambda_1(\Lambda_I)}.
\]
The same bound holds for rectangles with one primitive direction, since they contribute at most four solutions by Lemma~\ref{L:linear-elimination} and $J/\lambda_1(\Lambda_I)\gg1$.
By the proof of Lemma~\ref{L:embedding-intervals}, each interval in $t_2$ occurs with only $O(1)$ intervals in $t_1$.
We can therefore sum the preceding bound using Lemma~\ref{L:embedding-intervals}.
The bounds~\eqref{e:unbalanced-parameter-assumptions} and~\eqref{e:unbalanced-localization} give $\Delta\asymp|a|\ll M^2$, so
\[
 M^{-1/4}\Delta^{-1/8}\ll M^{1/4}\Delta^{-3/8}.
\]
Substituting the value of $J$ and summing over the $O(\log(2|a|))$ sets bounds the contribution for fixed $k,r,\ell,z_2$ by
\begin{equation}\label{e:unbalanced-fixed-parameter-count}
 \log^4(2|a|W)R^{3/4}M^{1/4}\Delta^{-3/8}.
\end{equation}
Here we used $R\ll |a|W$ to bound the factor $\log(2R)$ in Lemma~\ref{L:embedding-intervals}.

For fixed $k,z_2$, there are at most $\tau(k^2z_2)$ choices of $r$, and the sum of $\ell^{-3/2}$ over all $\ell\geq1$ is bounded.
Thus
\[
 \sum_{r^2\ell\mid k^2 z_2}\ell^{-3/2}
 \ll\tau(k^2z_2).
\]
Using $k\ll |a|^{1/2}$ and Lemma~\ref{L:scalar-divisor-average} with $j=1$ gives
\[
 \sum_{k\ll |a|^{1/2}}
 \sum_{r^2\ell\mid k^2 z_2}(k/\ell)^{3/2}
 \ll |a|^{5/4}\log^2(2|a|).
\]
Since $R^{3/4}=W^{3/4}(k/\ell)^{3/2}$, $\Delta\asymp |a|$, and $0<|z_2|\leq2$, summing~\eqref{e:unbalanced-fixed-parameter-count} gives the claimed bound for $S_2$.
\end{proof}

We now combine these bounds to prove Proposition~\ref{L:uniform-unbalanced-count}.
\begin{proof}[Proof of Proposition~\ref{L:uniform-unbalanced-count}]
We first count the points in interval pairs containing fewer than $H$ points.
For fixed $k,r,\ell,z_2$, the partition in Lemma~\ref{L:embedding-coordinates} and the count in the proof of Lemma~\ref{L:embedding-intervals} give $O(M\log(2|a|))$ interval pairs, so these points contribute $O(HM\log(2|a|))$ in total.
For fixed $k,z_2$, there are at most $\tau(k^2z_2)^2$ pairs $(r,\ell)$ with $r^2\ell\mid k^2z_2$.
Summing over $k\ll |a|^{1/2}$ and $0<|z_2|\leq2$ by Lemma~\ref{L:scalar-divisor-average} with $j=2$ and using $H\ll\theta^2$ bounds the number of points in these interval pairs by
\[
 O\left(\theta^2|a|^{1/2}M\log^{9}(2|a|)\right).
\]

In every interval pair containing at least $H$ points, the auxiliary form $\Psi=D_0F$ vanishes at all the points, so their contribution is counted by $S_1+S_2$ in~\eqref{e:unbalanced-curve-counts}.
Lemmas~\ref{L:uniform-exceptional-factors} and~\ref{L:unbalanced-residual-count} bound $S_1$ and $S_2$, respectively.

Since $M\ll W$ and $|a|\geq1$, we have $|a|^{1/2}M\ll W^{3/4}|a|^{7/8}M^{1/4}$.
Adding these bounds to the contribution from pairs of intervals containing fewer than $H$ points therefore gives
\[
 N_a(D,W)\ll\theta^3|a|^{7/8}W^{3/4}M^{1/4}\log^{27}(2|a|W).
\]
Substituting $M\asymp W^{1-1/(8\theta^2)}$ and using $\log(2|a|W)\ll\log(2W)$, we obtain
\[
 N_a(D,W)\ll\theta^3|a|^{7/8}W^{1-1/(32\theta^2)}\log^{27}(2W).\qedhere
\]
\end{proof}

\section{Proofs of the main theorems}\label{s:main-proofs}

\subsection{Proof of Theorem~\ref{T:uniform-exceptional}}\label{s:uniform-exceptional}

We first combine the estimates of Sections~\ref{s:balanced} and~\ref{s:unbalanced} to count squarefree $w$ with fundamental solutions $\varepsilon_w\leq w^\theta$.

\begin{proposition}\label{P:squarefree-exceptional}
There is an absolute constant $c_0>0$ such that, for all sufficiently large $X$,
\[
 \#\{w\sim X:w\text{ squarefree},\ \varepsilon_w\leq w^\theta\}
 \ll X^{1-2c_0/\theta^2},
\]
uniformly for $3\leq\theta\leq c_0\sqrt{\log X/\log\log X}$.
The implied constant is absolute.
\end{proposition}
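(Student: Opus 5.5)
Each squarefree $w\sim X$ with $\varepsilon_w\leq w^\theta$ produces, via Section~\ref{s:pell-factorizations}, a tuple $(d,e,u,v)$ solving~\eqref{e:factor-equation} with $uv=w$ and $Y\asymp\varepsilon_w\leq(4X)^\theta$. Distinct $w$ give distinct tuples, so the count is bounded by summing $N(D,E,U,V)$ over the dyadic boxes with $UV\asymp X$ and $Y\ll X^\theta$. The variables $D,E,U,V$ are all at most a power $X^{O(\theta)}$, so there are $O(\theta^4\log^4X)$ boxes. This factor, like every polylogarithmic or polynomial-in-$\theta$ factor below, is absorbed into $X^{c_0/\theta^2}$. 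The reason is that $\theta\leq c_0\sqrt{\log X/\log\log X}$ gives $\theta^{2}\log\log X\leq c_0^2\log X$, so $(\log X)^{C}\leq X^{C c_0^2/\theta^2}$, which is at most $X^{c_0/\theta^2}$ once $c_0$ is small. I split according to whether $\min(u,v)>(UV)^{1/(100\theta^2)}$.

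In the balanced case $U,V\geq R/4$ with $R=(UV)^{1/(100\theta^2)}$. I apply Proposition~\ref{P:balanced-ranges}, whose hypotheses hold for $X$ large and $c_0\leq\eta/2$. It gives $\theta^3\log^3(2X)X^{1-1/(200\theta^2)}$ per box. This is $\ll X^{1-2c_0/\theta^2}$ after absorbing the box count, provided $c_0$ is much smaller than $1/200$. Boxes in which only some tuples are balanced are handled by splitting the $u$ and $v$ ranges at the threshold; this costs at most one extra dyadic piece.

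In the unbalanced case I swap so that $v\leq X^{1/(100\theta^2)}$, set $a=-hv$ and $n=ev$, and get $d^2w=n^2+a$. Here $a$ is squarefree, $a\ne0,-1$ (shown at the start of Section~\ref{s:unbalanced}), and $|a|\leq 2X^{1/(100\theta^2)}$. The map from tuples to $(a,d,w)$ is injective. I then split on $d\sim D$.
\begin{itemize}
\item If $D\leq X^{1-1/(32\theta^2)}$, Lemma~\ref{L:direct-count} gives $(X^{1/2}+D)\log X$ for each $a$. Summing over the $O(X^{1/(100\theta^2)})$ values of $a$ gives $X^{1-1/(32\theta^2)+1/(100\theta^2)}\log X$, which is a power saving.
\item If $D$ is larger, I first check $D\leq W^{\theta-1/2}$. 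This holds because $d^2w\asymp n^2\ll Y^2/v\ll X^{2\theta}$ (using $e^2v\asymp Y$), so $d\ll X^{\theta-1/2}$; I absorb the constant by replacing $\theta$ with a slightly larger value, or use the $4^\theta$ slack. Proposition~\ref{L:uniform-unbalanced-count} then applies with $W\asymp X$ and gives $\theta^3\log^{27}X\,|a|^{7/8}X^{1-1/(32\theta^2)}$. Summing $|a|^{7/8}$ over $|a|\ll X^{1/(100\theta^2)}$ gives $X^{15/(800\theta^2)}$, for a total of $X^{1-(1/32-15/800)/\theta^2}=X^{1-1/(80\theta^2)}$ up to logarithmic factors.
\end{itemize}
Both bounds are $\ll X^{1-2c_0/\theta^2}$ for $c_0$ small.

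The main difficulty is bookkeeping rather than any single estimate. First, the hypotheses of Propositions~\ref{P:balanced-ranges} and~\ref{L:uniform-unbalanced-count} must be verified uniformly, including the ranges of $D$ and $\theta$ relative to $W$ and the constants $\eta$ and $c_1$. Second, all logarithmic factors and the box count must be shown to be absorbed. I expect the upper bound $D\leq W^{\theta-1/2}$ to need care at the boundary; if it does, I would enlarge $\theta$ by a factor like $1+1/\log X$, which only affects the constants.
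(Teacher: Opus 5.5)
Your proposal is correct and is essentially the paper's proof: the same split according to whether $\min(u,v)$ exceeds $X^{1/(100\theta^2)}$ (use this one threshold in both cases, rather than $(UV)^{1/(100\theta^2)}$ in the balanced case), Proposition~\ref{P:balanced-ranges} summed over $O((\theta\log X)^4)$ boxes, and in the unbalanced range Lemma~\ref{L:direct-count} for $D\le X^{1-1/(32\theta^2)}$ and Proposition~\ref{L:uniform-unbalanced-count} above it, giving the same savings $1/(200\theta^2)$, $17/(800\theta^2)$ and $1/(80\theta^2)$. Your worry about the hypothesis $D\le X^{\theta-1/2}$ comes from the lossy bound $n^2\ll Y^2/v$, which leaves a constant exponential in $\theta$; enlarging $\theta$ to absorb it would marginally break the hypothesis $|a|\le 2W^{1/(100\theta^2)}$. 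Instead, $d^2u\le\varepsilon_w\le(2X)^\theta$ together with $u\ge X^{1-1/(100\theta^2)}$ gives $d\le\sqrt2(2X)^{(\theta-1)/2}X^{1/(200\theta^2)}$, which is far below $X^{\theta-1/2}$.
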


\begin{proof}
Choose an absolute constant
\[
 0<c_0<1/800
\]
small enough to meet the restrictions on $\theta$ in Propositions~\ref{P:balanced-ranges} and~\ref{L:uniform-unbalanced-count} at scales $UV\asymp X$ and $W=X$, respectively.
The hypothesis on $\theta$ then gives
\[
 \theta^7(\log X)^{28}\ll X^{c_0/\theta^2}.
\]

For squarefree $w\sim X$ with $\varepsilon_w\leq w^\theta$, use the factorization~\eqref{e:factor-equation}.
In the balanced range $u,v>X^{1/(100\theta^2)}$, each box in~\eqref{e:scales} has $X/4<UV<2X$, $U,V>X^{1/(100\theta^2)}/2\geq (UV)^{1/(100\theta^2)}/4$, and $Y\leq\varepsilon_w<(4UV)^\theta$.
Summing the bound of Proposition~\ref{P:balanced-ranges} over the $O((\theta\log X)^4)$ boxes gives a contribution
\[
 \ll\theta^7(\log X)^7X^{1-1/(200\theta^2)}.
\]

In the unbalanced range, assume $v\leq X^{1/(100\theta^2)}$ by symmetry and put $a=-hv$ as in~\eqref{e:unbalanced-equation}.
As in Section~\ref{s:unbalanced}, $a$ is squarefree and $a\ne0,-1$, while $|a|\leq2X^{1/(100\theta^2)}$.
Since $d^2u\leq\varepsilon_w\leq(2X)^\theta$ and $uv\sim X$, we have
\[
 d\leq\sqrt2(2X)^{(\theta-1)/2}X^{1/(200\theta^2)}\leq X^{\theta-1/2}.
\]
We therefore sum $N_a(D,X)$, defined in~\eqref{e:unbalanced-count} with $W=X$, over these values of $a$ and the $O(\theta\log X)$ dyadic intervals with $D\leq X^{\theta-1/2}$.
For $D\leq X^{1-1/(32\theta^2)}$, Lemma~\ref{L:direct-count} gives a total contribution
\[
 \ll(\log X)^2X^{1-17/(800\theta^2)}.
\]
For larger $D$, Proposition~\ref{L:uniform-unbalanced-count} gives a total contribution
\[
 \ll\theta^4(\log X)^{28}X^{1-1/(80\theta^2)}.
\]
Since $\theta\geq3$, combining these three contributions gives a total of
\[
 \ll\theta^7(\log X)^{28}X^{1-1/(200\theta^2)}\ll X^{1-2c_0/\theta^2},
\]
since $c_0<1/800$.
\end{proof}

We pass from squarefree to general nonsquare integers by writing $m=s^2w$, with $w>1$ squarefree, and using
\[
 \varepsilon_w\leq\varepsilon_m.
\]
Indeed, writing $\varepsilon_m=t_m+u_m\sqrt m$, we have $t_m^2-w(su_m)^2=1$.
We now prove Theorem~\ref{T:uniform-exceptional} from the squarefree bound in Proposition~\ref{P:squarefree-exceptional}.

\begin{proof}[Proof of Theorem~\ref{T:uniform-exceptional}]
Choose $c_0$ as in Proposition~\ref{P:squarefree-exceptional} and put $c=c_0/4$.
We prove the theorem in the range
\[
 3\leq\theta\leq c\sqrt{\frac{\log X}{\log\log X}}.
\]
Write each nonsquare integer $m<X$ uniquely as $m=s^2w$, with $s\geq1$ and $w>1$ squarefree.
For $w<X^{1/2}$, the number of such integers is at most
\[
 \sum_{1<w<X^{1/2}}\left(\frac Xw\right)^{1/2}\ll X^{3/4}.
\]
For $w\geq X^{1/2}$, the inequality $\varepsilon_w\leq\varepsilon_m$ gives
\[
 \varepsilon_m\leq m^\theta
 \quad\Rightarrow\quad
 \varepsilon_w\leq\varepsilon_m\leq X^\theta\leq w^{2\theta}.
\]
Cover this range by intervals $w\sim W$ with $W=2^jX^{1/2}<X$, where $j\geq0$ is an integer.
Proposition~\ref{P:squarefree-exceptional} applies with parameter $2\theta$ on each interval, since
\[
 2\theta\leq\frac{c_0}{2}\sqrt{\frac{\log X}{\log\log X}}
 \leq c_0\sqrt{\frac{\log W}{\log\log W}}.
\]
Each $w$ gives at most $(X/W)^{1/2}$ choices of $s$, so the contribution from one interval is
\[
 \ll\left(\frac XW\right)^{1/2}W^{1-c_0/(2\theta^2)}
 =X^{1/2}W^{1/2-c_0/(2\theta^2)}.
\]
Since $1/2-c_0/(2\theta^2)\geq1/4$, the sum over these dyadic intervals is bounded by a constant multiple of its largest term.
Therefore
\[
 E_\theta(X)\ll X^{3/4}+X^{1-c_0/(2\theta^2)}\ll X^{1-c/\theta^2}.
\]
This completes the proof of Theorem~\ref{T:uniform-exceptional}.
\end{proof}

\subsection{Proof of Theorem~\ref{T:field-class-average}}\label{s:class-number-proof}

Recall that the set $\mathcal D$ consists of the positive nonsquare integers congruent to $0$ or $1$ modulo $4$.
We compare the units defined in~\eqref{e:totally-positive-unit} and~\eqref{e:pell-fundamental-unit}.

\begin{lemma}\label{L:unit-comparison}
For every $D\in\mathcal D$,
\[
 \eta_D^+\leq\varepsilon_D\leq(\eta_D^+)^3,
 \qquad
 h^+(D)\leq\frac{3\sqrt D L(1,\chi_D)}{\log\varepsilon_D}.
\]
\end{lemma}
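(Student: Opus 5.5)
The plan is to compare the unit groups of the orders $\mathcal O_D=\ZZ[(D+\sqrt D)/2]$ and $\mathcal O_{4D}=\ZZ[\sqrt D]$, using $\varepsilon_D=\eta_{4D}^+$ (noted after~\eqref{e:pell-fundamental-unit}). Every solution of $t^2-Du^2=1$ gives $(2t)^2-D(2u)^2=4$, so $\varepsilon_D$ is a totally positive unit of $\mathcal O_D$ exceeding $1$. The totally positive units of $\mathcal O_D$ greater than $1$ are exactly the powers $(\eta_D^+)^k$ with $k\geq1$. Hence $\varepsilon_D=(\eta_D^+)^k$ for some $k\geq1$, which gives $\eta_D^+\leq\varepsilon_D$ at once.

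For the upper bound I will show that $(\eta_D^+)^3\in\ZZ[\sqrt D]$; this forces $k\leq3$ by minimality of $\varepsilon_D=\eta_{4D}^+$. Write $\eta=(t+u\sqrt D)/2$ with $t^2-Du^2=4$, so $t\equiv Du \pmod 2$. If $t,u$ are both even, then already $\eta\in\ZZ[\sqrt D]$ and $k=1$. Otherwise $t,u$ are both odd, which forces $D\equiv 5\pmod 8$ since $t^2-Du^2=4$. Then I compute
\[
 \eta^3=\frac{t(t^2+3Du^2)+u(3t^2+Du^2)\sqrt D}{8}
\]
and check that both numerators are divisible by $8$. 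Using $Du^2=t^2-4$, the numerators become $t(4t^2-12)=4t(t^2-3)$ and $u(4t^2-4)=4u(t^2-1)$. For odd $t$, both $t^2-3$ and $t^2-1$ are even, so $8$ divides each numerator. Thus $\eta^3=a+b\sqrt D$ with $a,b$ positive integers satisfying $a^2-Db^2=1$, and therefore $\varepsilon_D\leq\eta^3$.

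The class number inequality then follows from~\eqref{e:ordinary-class-formula}. Since $\log\eta_D^+\geq\frac13\log\varepsilon_D$, we get
\[
 h^+(D)=\frac{\sqrt D L(1,\chi_D)}{\log\eta_D^+}\leq\frac{3\sqrt D L(1,\chi_D)}{\log\varepsilon_D}.
\]

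The only delicate step is the structural claim that the totally positive units of $\mathcal O_D$ greater than $1$ form the cyclic set $\{(\eta_D^+)^k\}$. This is standard, since the units of norm $+1$ and positive in the real embedding form an infinite cyclic group generated by the least one exceeding $1$, which is $\eta_D^+$ as in~\eqref{e:totally-positive-unit}. The same fact is used implicitly in Lemma~\ref{L:binary-quadratic-representations}. The parity computation for $\eta^3$ is routine.
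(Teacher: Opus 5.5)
Your parity split is incomplete, and the intermediate claim it is meant to prove is false. From $t\equiv Du\pmod 2$ you pass to ``$t,u$ both even or both odd'', but when $4\mid D$ the case $t$ even, $u$ odd also occurs. For instance, $D=12$ gives $\eta_{12}^+=(4+\sqrt{12})/2=2+\sqrt3$ (so $t=4$, $u=1$), and $(2+\sqrt3)^3=26+15\sqrt3\notin\ZZ[2\sqrt3]=\ZZ[\sqrt{12}]$. More generally, in this case write $D=4D'$ and $\eta=t/2+u\sqrt{D'}$. Using $(t/2)^2-D'u^2=1$, the coefficient of $\sqrt{D'}$ in $\eta^3$ is $u\bigl(4(t/2)^2-1\bigr)$, which is odd. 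Hence $\eta^3\notin\ZZ[\sqrt D]$ whenever this case arises, and your step ``$(\eta_D^+)^3\in\ZZ[\sqrt D]$, so $k\leq3$'' fails for these $D$.

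The lemma survives with a small repair. In this case
\[
 \eta^2=\frac{t^2+Du^2}{4}+\frac{tu}{2}\sqrt D=\Bigl(\frac{t^2}{2}-1\Bigr)+\frac{tu}{2}\sqrt D
\]
has positive integer coefficients and norm $1$, so $\varepsilon_D\leq\eta^2<\eta^3$. Indeed, $\varepsilon_{12}=7+4\sqrt3=(2+\sqrt3)^2$. The correct intermediate statement is therefore that $\eta^j\in\ZZ[\sqrt D]$ for some $1\leq j\leq3$, which is how the paper phrases it.

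The paper obtains this uniformly. Since $\ZZ[\sqrt D]=\ZZ+2\mathcal O_D$, we have $\eta^j\in\ZZ[\sqrt D]$ as soon as $\eta^j\equiv1$ in $(\mathcal O_D/2\mathcal O_D)^\times$. This group has order $1$, $2$ or $3$ according as $D\equiv1\pmod 8$, $4\mid D$, or $D\equiv5\pmod 8$. Your both-even case is $j=1$, and your cube computation handles $j=3$, which arises only for $D\equiv5\pmod 8$. The missing case is $j=2$, which arises only for $4\mid D$.

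The rest of your argument is fine: the bound $\eta_D^+\leq\varepsilon_D$ and the deduction of the class number inequality from~\eqref{e:ordinary-class-formula}.
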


\begin{proof}
Recall that $\mathcal O_D=\ZZ[(D+\sqrt D)/2]$.
Since $\ZZ[\sqrt D]=\ZZ+2\mathcal O_D$ and $|(\mathcal O_D/2\mathcal O_D)^\times|\leq3$, we have
\[
 (\eta_D^+)^j\in1+2\mathcal O_D\subseteq\ZZ[\sqrt D],
 \qquad \text{for some }1\leq j\leq3.
\]
The minimality of $\varepsilon_D$ gives $\varepsilon_D\leq(\eta_D^+)^3$, while that of $\eta_D^+$ gives $\eta_D^+\leq\varepsilon_D$.
The class number bound follows from~\eqref{e:ordinary-class-formula}.
\end{proof}

We next extend a second moment estimate from fundamental discriminants to all order discriminants.
\begin{lemma}\label{L:order-second-moment}
For $X\geq2$, we have
\[
 \sum_{\substack{D\in\mathcal D\\D<X}}L(1,\chi_D)^2\ll X.
\]
\end{lemma}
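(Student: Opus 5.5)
We reduce to fundamental discriminants and then use the known second moment over fundamental discriminants. Every $D\in\mathcal D$ can be written uniquely as $D=f^2D_0$, where $D_0>1$ is a fundamental discriminant and $f\geq1$. Then
\[
 \chi_D(n)=\chi_{D_0}(n)\mathbf 1_{\gcd(n,f)=1},
\]
so
\[
 L(1,\chi_D)=L(1,\chi_{D_0})\prod_{p\mid f}\Bigl(1-\frac{\chi_{D_0}(p)}p\Bigr).
\]
The Euler factor is at most $\prod_{p\mid f}(1+1/p)\leq\sigma(f)/f$, where $\sigma$ is the sum of divisors. This gives
\[
 \sum_{\substack{D\in\mathcal D\\D<X}}L(1,\chi_D)^2
 \leq\sum_{f<\sqrt X}\Bigl(\frac{\sigma(f)}f\Bigr)^2
 \sum_{\substack{D_0\ \mathrm{fundamental}\\0<D_0<X/f^2}}L(1,\chi_{D_0})^2 .
\]

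Next we need the bound $\sum_{0<D_0<Y}L(1,\chi_{D_0})^2\ll Y$ over positive fundamental discriminants. I would prove this directly rather than cite it. Truncate the series using P\'olya--Vinogradov: for $D_0<Y$,
\[
 L(1,\chi_{D_0})=\sum_{n\leq Y}\frac{\chi_{D_0}(n)}n+O(Y^{-1/2}\log Y).
\]
Squaring and summing over $D_0<Y$ leaves
\[
 \sum_{m,n\leq Y}\frac1{mn}\sum_{D_0<Y}\chi_{D_0}(mn).
\]
The diagonal terms, where $mn$ is a square, contribute $O(Y)$, since $\sum_{mn=\square}1/(mn)$ converges. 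The off-diagonal terms need an estimate for the character sum over $D_0$. The cleanest route is the large sieve for real characters (Heath-Brown). An alternative is an elementary split: use P\'olya--Vinogradov in the $D_0$ variable for $mn\leq Y^{1/2}$, bounding the sum over fundamental $D_0$ by sieving out square factors via Möbius, and handle the tail $n>Y^{1/3}$ crudely by Cauchy--Schwarz with the large sieve. Either way we get $O(Y)$; this is the classical estimate and I would accept it as standard.

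Finally, insert $Y=X/f^2$. The total becomes
\[
 X\sum_{f}\frac{(\sigma(f)/f)^2}{f^2}\ll X,
\]
because $\sigma(f)/f\ll\log\log(3f)$ and so the series converges.

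The main obstacle is the fundamental-discriminant second moment. Its off-diagonal character sums are the only genuinely analytic input, and the rest of the argument is bookkeeping with the conductor decomposition.
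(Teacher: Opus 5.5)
Your proposal is correct and follows the paper's proof. It uses the same decomposition $D=D_0f^2$ and the same Euler-factor bound: your $\sigma(f)/f$ is exactly the paper's $\sum_{d\mid f}1/d$, which the paper bounds by $\log(2f)$ rather than $\log\log(3f)$. It also makes the same reduction to the second moment over fundamental discriminants, which the paper simply cites from Granville--Soundararajan (Theorem~2 with $z=2$).

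Your sketched direct proof of that moment is loose as written. Heath-Brown's large sieve alone loses a factor $Y^{\varepsilon}$. Your head $mn\leq Y^{1/2}$ and tail $\max(m,n)>Y^{1/3}$ also leave the pairs with $m,n\leq Y^{1/3}$ and $mn>Y^{1/2}$ uncovered. Since you ultimately take the estimate as standard, as the paper does, this does not affect the argument.
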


\begin{proof}
The moment estimate of Granville and Soundararajan~\cite[Theorem~2 and the following remark]{gs} with $z=2$ gives
\begin{equation}\label{e:second-moment}
 \sum_{\substack{1<D_0<X\\D_0\ \text{fundamental}}}L(1,\chi_{D_0})^2\ll X.
\end{equation}
Write each $D\in\mathcal D$ uniquely as $D=D_0f^2$, with $f\geq1$ and $D_0$ a positive fundamental discriminant.
Euler factors give
\[
 L(1,\chi_{D_0f^2})=L(1,\chi_{D_0})\prod_{p\mid f}\left(1-\frac{\chi_{D_0}(p)}p\right).
\]
To bound the Euler factors, use
\[
 \prod_{p\mid f}\left(1+\frac1p\right)
 \leq\sum_{d\mid f}\frac1d\ll\log(2f).
\]
For fixed $f$, we have $D_0<X/f^2$, so~\eqref{e:second-moment} gives
\[
 \sum_{\substack{D\in\mathcal D\\D<X}}L(1,\chi_D)^2
 \ll X\sum_{f\geq1}\frac{\log^2(2f)}{f^2}\ll X.
\]
This proves the lemma.
\end{proof}

\begin{proof}[Proof of Theorem~\ref{T:field-class-average}]
Let $\theta\geq3$ be a parameter, to be chosen later.
At most $E_\theta(X)$ discriminants $D<X$ satisfy $\varepsilon_D\leq D^\theta$.
Cauchy--Schwarz and Lemma~\ref{L:order-second-moment} therefore give
\begin{equation}\label{e:class-average-moment-bounds}
 \sum_{\substack{D\in\mathcal D\\D<X}}L(1,\chi_D)\ll X,
 \qquad
 \sum_{\substack{D\in\mathcal D,\, D<X\\\varepsilon_D\leq D^\theta}}L(1,\chi_D)
 \ll\sqrt{X E_\theta(X)}.
\end{equation}

Split the class number sum according to whether $\varepsilon_D\leq D^\theta$ or $\varepsilon_D>D^\theta$.
We use $\log\varepsilon_D>\tfrac12\log D$ in the first case and $\log\varepsilon_D>\theta\log D$ in the second.
Since $\sqrt D/\log D\ll\sqrt X/\log X$ uniformly for $5\leq D<X$, Lemma~\ref{L:unit-comparison} gives
\begin{equation}\label{e:class-average-cutoff}
 \begin{split}
 \sum_{\substack{D\in\mathcal D\\D<X}}h^+(D)
 &\ll\frac{\sqrt X}{\log X}
 \left(
 \sum_{\substack{D\in\mathcal D, \,  D<X\\\varepsilon_D\leq D^\theta}}L(1,\chi_D)
 +\frac1\theta\sum_{\substack{D\in\mathcal D\\D<X}}L(1,\chi_D)
 \right)\\
 &\ll\frac{X^{3/2}}{\log X}
 \left(\left(\frac{E_\theta(X)}X\right)^{1/2}+\frac1\theta\right).
 \end{split}
\end{equation}
Here the second inequality follows from~\eqref{e:class-average-moment-bounds}.

Let $c$ be the constant in Theorem~\ref{T:uniform-exceptional}, and choose
\begin{equation}\label{e:class-average-parameter}
 \theta=c\sqrt{\frac{\log X}{\log\log X}}.
\end{equation}
For sufficiently large $X$, this choice satisfies the hypotheses of Theorem~\ref{T:uniform-exceptional}, which gives
\begin{equation}\label{e:class-exceptional-decay}
 \left(\frac{E_\theta(X)}X\right)^{1/2}
 \ll X^{-c/(2\theta^2)}
 =(\log X)^{-1/(2c)}
 \ll \theta^{-1}.
\end{equation}
Substituting into~\eqref{e:class-average-cutoff}, we obtain
\[
 \sum_{\substack{D\in\mathcal D\\D<X}}h^+(D)
 \ll\frac{X^{3/2}}{\theta\log X}
 \ll\frac{X^{3/2}\sqrt{\log\log X}}{(\log X)^{3/2}}.
\]
By~\eqref{e:field-form-average-comparison}, the same bound holds for $S(X)$.
\end{proof}

\subsection{Proof of Theorem~\ref{T:odd-class-average}}\label{s:odd-class-proof}

\begin{proof}
For a real quadratic field $K$ with discriminant $D$, the number $h_{\mathrm{odd}}(K)$ of ideal classes of odd order is the common odd part of $h(K)$ and $h^+(D)$.
Genus theory gives $2^{\omega(D)-1}\mid h^+(D)$, where $\omega(D)$ is the number of distinct prime divisors of $D$.
Lemma~\ref{L:unit-comparison} therefore yields
\[
 h_{\mathrm{odd}}(K)
 \leq\frac{h^+(D)}{2^{\omega(D)-1}}
 \leq\frac{6\sqrt D L(1,\chi_D)}{2^{\omega(D)}\log\varepsilon_D}.
\]
Browning and Chan~\cite[p.~1679, last display]{browning-chan} used the weighted estimate
\[
 \sum_{\substack{1<D<X\\D\ \mathrm{fundamental}}}
 \frac{\sqrt D L(1,\chi_D)}{2^{\omega(D)}\log D}
 \ll\frac{X^{3/2}}{(\log X)^{3/2}}.
\]

For $\theta\geq3$, split the sum as in~\eqref{e:class-average-cutoff}.
For $\varepsilon_D>D^\theta$, use this weighted estimate and the bound $\log\varepsilon_D>\theta\log D$.
For $\varepsilon_D\leq D^\theta$, use $h_{\mathrm{odd}}(K)\leq h^+(D)$ and~\eqref{e:class-average-moment-bounds}.
This gives
\[
 \sum_{0<D_K<X}h_{\mathrm{odd}}(K)
 \ll\frac{X^{3/2}}{\log X}
 \left(\left(\frac{E_\theta(X)}X\right)^{1/2}
           +\frac{1}{\theta\sqrt{\log X}}\right).
\]

We choose $\theta$ as in~\eqref{e:class-average-parameter}, with the same constant $c$.
Since $c\leq1/2$, equation~\eqref{e:class-exceptional-decay} gives $(E_\theta(X)/X)^{1/2}\ll1/\log X$, so the preceding bound gives
\[
 \sum_{0<D_K<X}h_{\mathrm{odd}}(K)
 \ll\frac{X^{3/2}\sqrt{\log\log X}}{(\log X)^2}.
\]
This completes the proof of Theorem~\ref{T:odd-class-average}.
\end{proof}

\end{document}